\title{Online Actuator Selection and Controller Design for Linear Quadratic Regulation with Unknown System Model}
\author{%
Lintao Ye%
\thanks{School of Artificial Intelligence and Automation, Huazhong University of Science and Technology, Wuhan, China; \texttt{\{yelintao93,chiming,zwliu\}@hust.edu.cn}.}
\and
Ming Chi%
\footnotemark[1]
\and
Zhi-Wei Liu%
\footnotemark[1]
\and
Vijay Gupta%
\thanks{The Elmore Family School of Electrical and Computer Engineering, Purdue University, IN, USA. Email: \texttt{gupta869@purdue.edu}.}
}
\DeclareMathOperator*{\argmin}{arg\,min}
\newtheorem{theorem}{Theorem}
\newtheorem{lemma}{Lemma}
\newtheorem{remark}{Remark}
\newtheorem{definition}{Definition}
\newtheorem{proposition}{Proposition}
\newtheorem{assumption}{Assumption}
\newcommand{\C}{\mathcal{C}}
\newcommand{\A}{\mathcal{A}}
\newcommand{\Q}{\mathcal{Q}}
\newcommand{\B}{\mathcal{B}}
\newcommand{\CN}{\mathcal{N}}
\newcommand{\R}{\mathbb{R}}
\newcommand{\BS}{\mathbb{S}}
\newcommand{\CS}{\mathcal{S}}
\newcommand{\G}{\mathcal{G}}
\newcommand{\K}{\mathcal{K}}
\newcommand{\Z}{\mathbb{Z}}
\newcommand{\E}{\mathbb{E}}
\newcommand{\F}{\mathcal{F}}
\newcommand{\CE}{\mathcal{E}}
\newcommand{\Prob}{\mathbb{P}}
\newcommand{\N}{\mathcal{N}}
\newcommand{\Tr}{\text{Tr}}
\newcommand{\rank}{\text{rank}}
	\let\Cref\crtCref
	\let\cref\crtcref
\algnewcommand{\LineComment}[1]{\State \(\triangleright\) #1}
\algnewcommand{\IfThenElse}[3]{%
  \State \algorithmicif\ #1\ \algorithmicthen\ #2\ \algorithmicelse\ #3}
\begin{document}

\maketitle

\begin{abstract}
We study the simultaneous actuator selection and controller design problem for linear quadratic regulation {\color{black} with Gaussian noise over a finite horizon of length $T$} and unknown system model. {\color{black} We consider both episodic and non-episodic settings of the problem and propose online algorithms that specify} both the sets of actuators to be utilized under a cardinality constraint and the controls corresponding to the sets of selected actuators. In the episodic setting, the interaction with the system breaks into {\color{black} $N$ episodes, each of which restarts from a given initial condition and has length $T$}. In the non-episodic setting, the interaction goes on continuously. Our online algorithms leverage a multiarmed bandit algorithm to select the sets of actuators and a certainty equivalence approach to design the corresponding controls. {\color{black} We show that our online algorithms yield $\sqrt{N}$-regret for the episodic setting and $T^{2/3}$-regret for the non-episodic setting}. We extend our algorithm design and analysis to {\color{black} show scalability with respect to} 
both the total number of candidate actuators and the cardinality constraint. 
We numerically validate our theoretical results.
\end{abstract}

\section{Introduction}\label{sec:introduction}
In large-scale control system design, the number of actuators (or sensors) that can be installed is often limited by budget or complexity constraints. The problem of selecting a subset of all the candidate actuators (or sensors), in order to optimize a system objective while satisfying a budget constraint is a classic problem referred to as actuator (or sensor) selection \cite{van2001review,gupta2006stochastic,olshevsky2014minimal,summers2016submodularity,ye2020resilient,siami2020separation,tzoumas2020lqg,ye2020complexity}. However, most of the existing works on this problem assume the knowledge of the system model when designing the actuator (or sensor) selection algorithms. In this work, we are interested in the situation when the system model is unknown \cite{hou2013model}. In such a case, the existing algorithms for the actuator selection problem do not apply.

We study the simultaneous actuator selection and controller design problem for Linear Quadratic Regulation (LQR) \cite{anderson2007optimal}. The goal is to select a sequence of sets of actuators each with a cardinality constraint, while minimizing the accumulative quadratic cost over a time horizon. We assume that the system model is unknown, and conside solving the problem by interacting with the system in an online manner. We study two settings of the problem: episodic and non-episodic settings. In the episodic setting, the interaction with the system breaks into subsequences, each of which starts from a given initial condition and ends at a terminal time step. In the non-episodic setting, the interaction with the system goes on continuously. Both the episodic and non-episodic settings are widely studied in general reinforcement learning problems, and capture different scenarios in practice \cite{sutton2018reinforcement,jin2020provably}. We provide online algorithms to solve the problem, and leverage the notion of regret \cite{bubeck2011introduction,arora2012online,hazan2016introduction} to characterize their performance. 



\subsection*{Related Work}
Actuator (or sensor) selection in control systems has been studied in the literature extensively. Since the problem is NP-hard \cite{ye2020complexity}, much work in the literature provides approximation algorithms to solve the problem with performance guarantees \cite{olshevsky2014minimal,tzoumas2015minimal,guo2020actuator}. However, most of the previous work assumes known system model. Exceptions are \cite{golovin2010online,streeter2008online}, where the authors studied an online sensor selection problem for the estimation of a static random variable. 
Another related work is \cite{fotiadis2021learning}, where the authors considered an unknown continuous-time linear time-invariant system without stochastic noise and studied the problem of selecting a subset of actuators under a cardinality constraint such that a controllability metric of the system is optimized. The authors proposed an online actuator selection algorithm and showed that the algorithm will select the optimal set of actuators after a finite number of time steps.

The LQR problem with unknown system matrices, also known as the optimal adaptive control problem, has been studied for decades \cite{kumar1983optimal,campi1998adaptive,chen1991identification,aastrom2013adaptive,faradonbeh2020optimism}. One standard approach is the certainty equivalence approach which first estimates the system matrices from system trajectories and then use the estimate of the system matrices to design the control as if the true system matrices are used. Thus, it is crucial to ensure the consistency of the estimate in order to achieve the optimal performance. Based on the consistent estimates returned by least squares as shown by \cite{lai1982least}, \cite{chen1987optimal} designed a certainty equivalent controller with an additive random perturbation. In \cite{kumar1983optimal} and \cite{campi1998adaptive}, a reward-biased estimate of the system matrices is utilized. {\color{black} In \cite{faradonbeh2020adaptive},  randomly perturbed least squares and Thompson sampling were used to obtain the estimates to design the certainty equivalent controller and a regret of nearly square-root growth rate was established.} However, the aforementioned works focused on the asymptotic performance of the certainty equivalent controller (as the number of data samples from the system trajectories used for estimating the system matrices goes to infinity). The  finite-sample analyses of the certainty equivalence approach have been studied for learning LQR \cite{dean2020sample,mania2019certainty,cassel2020logarithmic,faradonbeh2020input,ziemann2022regret}. {\color{black}In particular, it was shown in \cite{faradonbeh2020input,mania2019certainty} that the certainty equivalent controller with a certain additive random perturbation achieves a regret of $\tilde{O}(\sqrt{T})$, where $T$ is the number of time steps in the LQR problem and $\tilde{O}(\cdot)$ hides logarithmic factors in $T$.} Moreover, \cite{mete2022augmented} analyzed the regret of the certainty equivalence approach based on the reward-biased estimate.

\subsection*{Contributions}
We summarize the technical challenges and our contributions in this paper. First, we formulate the simultaneous actuator selection and controller design problem for LQR with unknown system model. This problem is challenging since it contains both discrete and continuous variables (i.e., the sets of actuators and the corresponding controls, respectively). The online algorithms that we propose to solve the problem contain two phases. The online algorithms first estimate the system matrices based on the data samples from a single system trajectory. Based on the estimated system matrices, the online algorithms then leverage a multiarmed bandit algorithm \cite{auer2002nonstochastic} to select the set of actuators, and leverage the certainty equivalence approach \cite{mania2019certainty} for the controller design. We carefully balance the length of the two phases to achieve a desired performance of the online algorithms. When characterizing the regret of the online algorithms, we identify a proper decomposition of the regret, and upper bound each term in the regret decomposition. 

Second, since we consider the actuator selection problem for finite-horizon LQR, we extend the analysis and results for the certainty equivalence approach proposed for learning infinite-horizon LQR (without the actuator selection component) \cite{mania2019certainty,cassel2020logarithmic} to the finite-horizon setting. The analysis for the finite-horizon setting is more challenging, since the optimal controller for finite-horizon LQR is time-varying in general, while the optimal controller for infinite LQR is time-invariant \cite{bertsekas2017dynamic}.

Third, we provide a comprehensive study of the problem by considering both the online episodic and non-episodic settings. {\color{black} The non-episodic setting is more challenging than the episodic setting, since the system state cannot be reset to a given initial condition after each episode. However, we show that given a non-episodic instance of the problem, one can first construct a corresponding episodic instance and then apply the proposed online algorithms.} We show that our online algorithm for the episodic setting yields a regret of $\tilde{O}(\sqrt{T^2N})$, where $N$ is the number of episodes and $T$ is the number of time steps in each episode. For the non-episodic setting, the online algorithm yields a regret of $\tilde{O}(T^{2/3})$, where $T$ is the horizon length. 

Finally, we extend our analysis to efficiently handle instances of the problem when both the total number of candidate actuators and the cardinality constraint scale large. Since the (offline) actuator selection problem for LQR with known system model is NP-hard \cite{ye2020complexity}, we leverage a weaker notion of regret, i.e., $c$-regret, introduced for online algorithms for combinatorial optimization problems \cite{streeter2008online,chen2020black,salazar2021differentially}, and characterize the performance of the online algorithm that we propose for the large-scale problem instances. We show that the $c$-regret of our online algorithm scales as $\tilde{O}(TN^{2/3})$ (resp., $\tilde{O}(T^{3/4})$) in the episodic (resp., non-episodic) setting, where $c\in(0,1)$ is parameterized by the problem parameters.


\subsection*{Notation and terminology}
The sets of integers and real numbers are denoted as $\mathbb{Z}$ and $\mathbb{R}$, respectively. The set of integers (resp., real numbers) that are greater than or equal to $a\in\mathbb{R}$ is denoted as $\mathbb{Z}_{\ge a}$ (resp., $\R_{\ge a}$). For a real number $a$, let $\lceil a \rceil$ be the smallest integer that is greater than or equal to $a$. The space of $n$-dimensional real vectors is denoted by $\mathbb{R}^{n}$, and the space of $m\times n$ real matrices is denoted by $\mathbb{R}^{m\times n}$. For a matrix $P\in\R^{n\times n}$, let $P^{\top}$, $\Tr(P)$, and $\{\sigma_i(P):i\in\{1,\dots,n\}\}$ be its transpose, trace, and set of singular values, respectively. Without loss of generality, let the singular values of $P$ be ordered as $\sigma_1(P)\ge\cdots\ge\sigma_n(P)$. Let $\norm{\cdot}$ denote the $\ell_2$ norm, i.e., $\norm{P}=\sigma_1(P)$ for a matrix $P\in\R^{n\times n}$, and $\norm{x}=\sqrt{x^{\top}x}$ for a vector $x\in\R^n$. Let $\norm{P}_F=\sqrt{\Tr(PP^{\top})}$ be the Frobenius norm of $P\in\R^{n\times m}$. A positive semidefinite matrix $P$ is denoted by $P\succeq0$, and $P\succeq Q$ if and only if $P-Q\succeq0$. Let $\BS_+^n$ (resp., $\BS_{++}^n$) denote the set of $n\times n$ positive semidefinite (resp., positive definite) matrices. Let $I$ be an identity matrix whose dimension can be inferred from the context. Given any integer $n\ge1$, define $[n]=\{1,\dots,n\}$. The cardinality of a finite set $\mathcal{A}$ is denoted by $|\mathcal{A}|$. Let $\mathds{1}\{\cdot\}$ denote an indicator function.

\section{Problem Formulation and Preliminaries}\label{sec:problem formulation}
\subsection{Problem Formulation}\label{sec:LQR}
Consider a discrete-time linear time-invariant system
\begin{equation}
\label{eqn:LTI}
x_{t+1} = Ax_t + Bu_t + w_t,
\end{equation} 
where $A\in\R^{n\times n}$ is the system dynamics matrix, $x_t\in\R^n$ is the state vector, $B\in\R^{n\times m}$ is the input matrix, $u_t\in\R^m$ is the control, and $\{w_t\}_{t\ge0}$ are i.i.d. noise terms with zero mean and covariance $W$ for all $t\in\Z_{\ge0}$. Let $\G$ be the set that contains all the candidate actuators. Denote $B=\begin{bmatrix}B_1 & \cdots & B_{|\G|}\end{bmatrix}$, where $B_i\in\R^{n\times m_i}$ for all $i\in\G$ with $\sum_{i\in\G}m_i=m$. For any $i\in\G$, $B_i$ corresponds to a candidate actuator that can be potentially selected and installed. At each time step $t\in\Z_{\ge0}$, only a subset of actuators out of all the candidate actuators is selected to provide controls to system~\eqref{eqn:LTI}, due to, e.g., budget constraints. For any $t\in\Z_{\ge0}$, let $\CS_t\subseteq\G$ denote the set of actuators selected for time step $t$, let $B_{\CS_t}\triangleq\begin{bmatrix}B_{i_1} & \cdots & B_{i_{|\CS_t|}}\end{bmatrix}$ be the input matrix associated with the actuators in $\CS_t$, and let $u_{t,\CS_t}\triangleq\begin{bmatrix}u_{t,i_1}^{\top} & \cdots & u_{t,i_{|\CS_t|}}^{\top}\end{bmatrix}^{\top}$ be the control provided by the actuators in $\CS_t$, where $\CS_t=\{i_1,\dots,i_{|\CS_t|}\}$. Given a horizon length $T\in\Z_{\ge1}$ and an actuator selection $\CS\triangleq(\CS_0,\dots,\CS_{T-1})$, we consider the following quadratic cost:
\begin{equation}
\label{eqn:cost}
J(\CS,u_{\CS})\triangleq\Big(\sum_{t=0}^{T-1}x_t^{\top}Q x_t+u_{t,\CS_t}^{\top}R_{\CS_t}u_{t,\CS_t}\Big)+x_T^{\top}Q_f x_T,
\end{equation}
where $u_{\CS}\triangleq(u_{0,\CS_0},\dots,u_{T-1,\CS_{T-1}})$, $Q,Q_f\in\BS_{+}^n,R\in\BS_{++}^m$ are the cost matrices, and $R_{\CS_t}\in\BS_{++}^{m_{\CS_t}}$ (with $m_{\CS_t}=\sum_{i\in \CS_t}m_i$) is a submatrix of $R$ corresponding to the set $\CS_t$.\footnote{In other words, the matrix $R_{\CS_t}$ is obtained by deleting the rows and columns of $R$ indexed by the elements in the set $\G\setminus \CS_t$.}

Given system~\eqref{eqn:LTI} and $T\in\Z_{\ge1}$, our goal is to solve the following actuator selection problem for LQR:
\begin{equation}
\begin{split}
\label{eqn:LQR obj}
&\min_{\CS,u_{\CS}}\E\big[J(\CS,u_{\CS})\big],\\
&s.t.\ \CS_t\subseteq\G,|\CS_t|= H,\ \forall t\in\{0,\dots,T-1\},
\end{split}
\end{equation}
where $H\in\Z_{\ge1}$ is a cardinality constraint on the sets of selected actuators, and the expectation is taken with respective to $w_0,\dots,w_{T-1}$. {\color{black} Conditioning on an actuator selection $\CS=(\CS_0,\dots,\CS_{T-1})$, it is well-known that the corresponding optimal control, i.e., $\tilde{u}_{\CS}\in\arg\min_{u_{\CS}}\E[J(\CS,u_{\CS})]$, is given by a linear state-feedback controller \cite[Chapter~3]{bertsekas2017dynamic}}
\begin{equation} 
\label{eqn:optimal control law}
\tilde{u}_{t,\CS_t} = K_{t,\CS}x_t,\ \forall t\in\{0,1,\dots,T-1\},
\end{equation}
where the control gain matrix $K_{t,\CS}\in\R^{m_{\CS_t}\times n}$, with $m_{\CS_t}=\sum_{i\in\CS_t}m_i$, is given by
\begin{equation}
\label{eqn:control gain}
K_{t,\CS} = -(B^{\top}_{\CS}P_{t+1,\CS}B_{\CS} + R_{\CS})^{-1}B^{\top}_{\CS_t}P_{t+1,\CS}A,
\end{equation}
where $P_{t,\CS}\in\BS_+^n$ is given recursively by the following Discrete Algebraic Riccati Equation (DARE):
\begin{equation}
\label{eqn:recursion for P_k}
P_{t,\CS} = A^{\top}P_{t+1,\CS}A-A^{\top}P_{t+1,\CS}B_{\CS_t}(B^{\top}_{\CS_t}P_{t+1,\CS}B_{\CS_t}+R_{\CS_t})^{-1}B^{\top}_{\CS_t}P_{t+1,\CS}A+Q
\end{equation}
initialized with $P_{T,\CS}=Q_f$. Moreover, {\color{black} conditioning on an actuator selection $\CS$}, we know that \cite[Chapter~3]{bertsekas2017dynamic}
\begin{align}\nonumber
J(\CS)&\triangleq\min_{u_{\CS}}\E\big[J(\CS,u_{\CS})\big]=\E\big[J(\CS,\tilde{u}_{\CS})\big]\\
&=\E[x_0^{\top} P_{0,\CS}x_0]+\sum_{t=0}^{T-1}\Tr(P_{t+1,\CS}W).\label{eqn:opt LQR cost S_t}
\end{align}
Thus, supposing the system matrices are known, we see that solving Problem~\eqref{eqn:LQR obj} is equivalent to solving 
\begin{equation}
\label{eqn:LQR obj 2nd}
\begin{split}
&\min_{\CS} J(\CS)\\
&s.t.\ \CS_t\subseteq\G,|\CS_t|=H,\ \forall t\in\{0,\dots,T-1\}.
\end{split}
\end{equation}
When the system matrices $A$ and $B$ are unknown, Eqs.~\eqref{eqn:optimal control law}-\eqref{eqn:recursion for P_k} cannot be directly used to design the control $u_{\CS}$ {\color{black} conditioning on an actuator selection $\CS=(\CS_1,\dots,\CS_{T-1})$.} {\color{black}We now define and solve both the episodic and non-episodic settings of Problem~\eqref{eqn:LQR obj}. In the sequel, we use superscript $k$ to index an episode and subscript $t$ to index a time step.}


%
\subsection{Online Algorithm for Episodic Setting}\label{sec:online AS}
In the episodic setting, system~\eqref{eqn:LTI} starts from an initial condition at the beginning of each episode, and we aim to obtain a solution to Problem~\eqref{eqn:LQR obj} by interacting with system~\eqref{eqn:LTI} for a set of episodes. Specifically, let $N\in\Z_{\ge1}$ be the total number of episodes and let {\color{black} $T$} 
be the time horizon length of any episode $k\in[N]$. 
Considering any $k\in[N]$, the dynamics of system~\eqref{eqn:LTI} in episode $k$ is given by
\begin{align}
x^{k}_{t+1} = Ax^{k}_t + B_{\CS^{k}_t} u_{t,\CS^{k}_t}^k + w^{k}_t,\label{eqn:LTI with B_i}
\end{align}
where $x^{k}_t$, $u_{t,\CS^k}^k$ and $w^{k}_t$ are the state, control and noise at time step $t$ in episode $k$, respectively, and $\CS^k=(\CS^k_0,\dots,\CS^k_{T-1})$ with $S^k_t$ to be the set of actuators selected for time step $t$, for all $t\in\{0,\dots,T-1\}$. We assume that $\{w_t^k\}_{t=0}^{T-1}$ are i.i.d with $\E[w_t^k]=0$ and $\E[w^{k}_tw^{k\top}_t]=W$ for all $t\in\{0,1,\dots,T-1\}$ and for all $k\in[N]$. {\color{black} We also assume for simplicity that $x_0^k=0$ for all $k\in[N]$.} 
In this work, we focus on the scenario with $\CS_0^k=\cdots=\CS_{T-1}^k$ for all $k\in[K]$, i.e., the set of selected actuators in each episode is fixed during that episode. Slightly abusing the notation, we simply denote the set of selected actuators for episode $k$ as $\CS^k\subseteq\G$. 

Now, similarly to Eq.~\eqref{eqn:cost}, for any $k\in[N]$ we define the following quadratic cost of episode $k$ when the set of actuators $\CS^k\subseteq\G$ is selected to provide $u_{t,\CS^k}$ for all $t\in\{0,\dots,T-1\}$:
\begin{equation}
\label{eqn:episode t cost}
J_k(\CS^k,u_{\CS^k}^k)=\Big(\sum_{t=0}^{T-1}x_t^{k\top}Q^k x^{k}_t+u_{t,\CS^k}^{k\top}R_{\CS^k}^{k}u_{t,\CS^k}^k\Big)+x_T^{k\top}Q_f^{k} x_T^{k},
\end{equation}
where $u_{\CS^k}^k=(u_{0,\CS^k}^k,\dots,u_{T-1,\CS^k}^k)$, $Q^k,Q^k_f\in\BS_{+}^n,R\in\BS_{++}^m$ are the cost matrices, and $R_{\CS^k}^{k}\in\BS_{++}^{m_{\CS^k}}$ (with $m_{\CS^k}=\sum_{i\in \CS^k}m_i$) is a submatrix of $R^k$ corresponding to the set $\CS^k$. Note that we allow different cost matrices across the episodes. 
We assume that $Q^k$, $Q_f^k$ and $R^k$ are known for all $k\in[N]$. 

An online algorithm for the episodic setting of Problem~\eqref{eqn:LQR obj} then works as follows: At the beginning of each episode $k\in[N]$, the algorithm selects a set $S^k\subseteq\G$ (with $|S^k|= H$) of actuators and designs the control $u_{\CS^k}=(u_{0,\CS^k},\dots,u_{T-1,\CS^k})$ provided by the actuators in $\CS^k$. Note that when making the decisions at the beginning of each episode $k\in[N]$, the following information is available to the online algorithm: (a) the system state trajectories $x^1,\dots,x^{k-1}$, where $x^{k^{\prime}}\triangleq(x^{k^{\prime}}_0,\dots,x^{k^{\prime}}_{T-1})$ for all $k^{\prime}\in[k-1]$; and (b) previous decisions made by the algorithm, i.e., $\CS^1,\dots,\CS^{k-1}$ and $u_{\CS^1},\dots,u_{\CS^{k-1}}$. 
Since $Q^k,Q_f^k,R^k$ are assumed to be known, the costs $J_{k^{\prime}}(\CS^{k^{\prime}},u_{\CS^{k^{\prime}}}^{k^{\prime}})$ $\forall k^{\prime}\in[k-1]$ are also given at the beginning of any episode $k\in[N]$. Thus, the information setting discussed above corresponds to the bandit information setting in online optimization literature (see., e.g., \cite{hazan2016introduction}). In order to characterize the performance of such an online algorithm, denoted as $\A_e$, for Problem~\eqref{eqn:LQR obj} in the episodic setting, we aim to minimize the following regret of $\A_e$:
\begin{equation}
\label{eqn:regret of A_e}
R_{\A_e}\triangleq\E_{\A_e}\Big[\sum_{k=1}^N J_k(\CS^k,u_{\CS^k}^k)\Big]-\sum_{k=1}^N J_k(\CS^{k}_{\star}),
\end{equation} 
where $\E_{\A_e}[\cdot]$ denotes the expectation with respect to the randomness of the algorithm, $J_k(\CS^{k}_{\star})$ is defined as \eqref{eqn:opt LQR cost S_t} and $\CS^{k}_{\star}$ is an optimal solution to \eqref{eqn:LQR obj 2nd} (with cost matrices $Q^k,R^k,Q_f^k$ and an extra constraint $\CS_0=\cdots=\CS_{T-1}$), for all $k\in[N]$. 
\begin{remark}
\label{remark:dynamic regret}
Note that $R_{\A_e}$ compares the cost incurred by the online algorithm to the benchmark given by the minimum achievable cost of Problem~\eqref{eqn:LQR obj} in the episodic setting. 
Since $\CS_{\star}^k$ can potentially be different across the episodes in the benchmark in Eq.~\eqref{eqn:regret of A_e}, $R_{\A_e}$ is a dynamic regret \cite{auer2002nonstochastic,zinkevich2003online,arora2012online}. In fact, one can consider any sets $\CS^1_{\star},\dots,\CS^N_{\star}\subseteq\G$ with $|\CS^k_{\star}|= H$ for all $k\in[N]$ as the benchmark in Eq.~\eqref{eqn:regret of A_e}. For any $\CS_{\star}=(\CS_{\star}^1,\dots,\CS_{\star}^N)$, define
\begin{equation}
\label{eqn:number of switchings}
h((\CS^1_{\star},\dots,\CS^N_{\star}))=1+|\{1\le\ell<N-1:\CS^{\ell}_{\star}\neq\CS^{\ell+1}_{\star}\}|.
\end{equation}
Our regret bound for $R_{\A_e}$ 
holds for a general benchmark $\CS_{\star}=(\CS_{\star}^1,\dots,\CS_{\star}^N)$, where $\CS_{\star}^k$ is any $\CS_{\star}^k\subseteq\G$ with $|\CS_{\star}^k|=H$. If we consider benchmark $\CS_{\star}$ with $h(\CS_{\star})=1$, i.e., $\CS_{\star}^1=\cdots=\CS_{\star}^N$, Eq.~\eqref{eqn:regret of A_e} reduces to a static regret \cite{auer2002nonstochastic,bubeck2011introduction}. 
\end{remark}

\subsection{Online Algorithm for Non-episodic Setting}\label{sec:online AS non-episodic}
In the non-episodic (i.e., continuous) setting, we 
interact with system~\eqref{eqn:LTI} over a time horizon of length $T\in\Z_{\ge1}$, where the system is not reset to the initial condition $x_0=0$ during the interaction. At the beginning of each time step $t\in\{0,\dots,T-1\}$, the algorithm 
selects a set $\CS_t\subseteq\G$ (with $|\CS_t|= H$) of actuators and designs the corresponding control $u_{t,\CS_t}$. When making the decisions at the beginning of each time step $t\in\{0,\dots,T-1\}$, the information available to the online algorithm includes: (a) $x_0,\dots,x_{t-1}$; and (b) $\CS_0,\dots,\CS_{t-1}$ and $u_{0,\CS_0},\dots,u_{t-1,\CS_{t-1}}$. 
Similar to our arguments above, the information setting corresponds to the bandit setting in the online optimization literature. Denote $\CS_{0:t}\triangleq(\CS_0,\dots,\CS_t)$ and $u_{\CS_{0:t}}\triangleq(u_{0,\CS_0},\dots,u_{t,\CS_t})$ for all $t\in\{0,\dots,T-1\}$. To characterize the performance of such an online algorithm, denoted as $\A_c$, we minimize the following regret of $\A_c$:
\begin{equation}
\label{eqn:regret of A_c}
R_{\A_c}\triangleq\E_{\A_c}\Big[\sum_{t=0}^{T-1}c_t(\CS_{0:t},u_{\CS_{0:t}})\Big]-J(\CS^{\star}),
\end{equation}
where $\E_{\A_c}[\cdot]$ denotes the expectation with respect to the randomness of the algorithm, and the benchmark $J(\CS^{\star})$ is defined as \eqref{eqn:opt LQR cost S_t} with $\CS^{\star}=(\CS_{0}^{\star},\dots,\CS_{T-1}^{\star})$ to be an optimal solution to \eqref{eqn:LQR obj 2nd}. Note that in Eq.~\eqref{eqn:regret of A_c} we denote,
\begin{equation}
\label{eqn:cost per time step}
c_t(\CS_{0:t},u_{\CS_{0:t}})=x_t^{\top}Qx_t + u_{t,\CS_t}^{\top}R_{\CS_t}u_{t,\CS_t},
\end{equation}
for all $t\in\{0,\dots,T-2\}$, and 
\begin{equation}
\label{eqn:final time step cost}
c_t(\CS_{0:t},u_{\CS_{0:t}})=x_t^{\top}Qx_t + u_{t,\CS_t}^{\top}R_{\CS_t}u_{t,\CS_t}+x_{t+1}^{\top}Q_fx_{t+1},
\end{equation}
for $t=T-1$. Similarly, one can consider a general benchmark $\CS^{\star}$ in Eq.~\eqref{eqn:regret of A_c} as we described in Remark~\ref{remark:dynamic regret}.

\section{Algorithm Design for Episodic Setting}\label{sec:algorithm design for episodic setting}
We now design an online algorithm for the episodic setting of Problem~\eqref{eqn:LQR obj}. 
In our algorithm design, we leverage an algorithm for the multiarmed bandit problem (i.e., the {\bf Exp3.S} algorithm) \cite{auer2002nonstochastic} to select a set $\CS^k\subseteq\G$ (with $\CS^k=H$) for all $k$. Given the set $\CS^k$ of selected actuators, we then leverage a certainty equivalence approach \cite{mania2019certainty,cassel2020logarithmic} to design the corresponding control $u_{\CS^k}$. 

\subsection{Exp3.S Algorithm for Multiarmed Bandit}\label{sec:bandit problem}
The Multi-Armed Bandit (MAB) problem is specified by a number of episodes $N_s$, a finite set $\Q$ of possible actions (i.e., arms), and costs of actions $y^1,\dots,y^{N_s}$ with $y^k=(y_1^k,\dots,y_{|\Q|}^k)$ for all $k\in[N_s]$, where $\Q=\{1,\dots,|\Q|\}$ and $y_i^k\in[y_a,y_b]$ (with $y_a,y_b\in\R$) denotes the cost of choosing action $i$ in episode $k$, for all $k\in[N_s]$ and for all $i\in\Q$. At the beginning of each episode $k\in[N_s]$, one can choose an action from the set $\Q$. Choosing $i_k\in\Q$ for episode $k\in[N_s]$ incurs a cost $y_{i_k}^k$, which is revealed at the end of episode $k$. {\color{black} To  minimize the accumulative cost over the $N_s$ episodes, an online algorithm $\A_M$ chooses} action $i_k\in\Q$ for each episode $k\in[N_s]$, where the decision is made based on $i_{k^{\prime}}$ and $y_{i_{k^{\prime}}}^{k^{\prime}}$ for all $k^{\prime}\in\{1,\dots,k-1\}$. For any sequence of actions, i.e., $j^{N_s}\triangleq(j_1,\dots,j_{N_s})$, denote $h(j^{N_s})= 1+|\{1\le k<N_s:j_k\neq j_{k+1}\}|$. To design algorithm for the actuator selection problem (i.e., Problem~\eqref{eqn:LQR obj}), we leverage the {\bf Exp3.S} algorithm (i.e., Algorithm~\ref{alg:exp3.s}) \cite{auer2002nonstochastic}.

\begin{algorithm2e}
\caption{\bf Exp3.S}\label{alg:exp3.s}
\KwIn{Candidate set $\Q$, total number of episode $N_s$, parameters  $\alpha_1\in(0,1)$ and $\alpha_2>0$.}
Initialize $\varpi_i^1=1$, $\forall i\in[|\Q|]$.\\
\For{$k=1$ to $N_s$}{
  Set $q_i^k=(1-\alpha_1)\frac{\varpi_i^k}{\sum_{j=1}^{|\Q|}\varpi_j^k}+\frac{\alpha_1}{|\Q|},\ \forall i\in[|\Q|]$.\\
  Draw $i_k\in\Q$ according to the probabilities $q_1^k,\dots,q_{|\Q|}^k$, receive cost $y_{i_k}^k\in[y_a,y_b]$, and normalize $y_{i_k}^k=(-y_{i_k}^k+y_b)/(y_b-y_a)$.\\
  \For{$j=1,\dots,|\Q|$}{
    Set $\hat{y}_j^k=
    \begin{cases}
    y_j^k/q_j^k\ \text{if} j=i_k,\\
    0\ \text{otherwise},
    \end{cases}$
$\varpi_j^{k+1}=\varpi_j^k\text{exp}\Big(\frac{\alpha_1\hat{y}_j^k}{|\Q|}\Big)+\frac{e\alpha_2}{|\Q|}\sum_{i=1}^{|\Q|}\varpi_i^k$.
  }
}
\end{algorithm2e}


\begin{lemma}\label{thm:regret for exp3.s}\cite[Corollary~8.2]{auer2002nonstochastic} 
Consider any sequence $j^{N_s}=(j_1,\dots,j_{N_s})$. In Algorithm~\ref{alg:exp3.s}, let $\alpha_2=1/{N_s}$ and
\begin{equation*}
\alpha_1=\min\Big\{1,\sqrt{\frac{|\Q|(h(j^{N_s})\ln(|\Q|N_s)+e)}{(e-1)N_s}}\Big\}.
\end{equation*} 
Let $\E_M[\cdot]$ denote the expectation with respective to the randomness in the algorithm. Then, we have 
\begin{align}
R_M(j^{N_s})\triangleq\E_{M}\bigg[\sum_{k=1}^{N_s}y_{i_k}^k\bigg] - \sum_{k=1}^{N_s}y_{j_k}^k\le 2(y_b-y_a)\sqrt{e-1}\sqrt{|\Q|{N_s}(h(j^{N_s})\ln(|\Q|N_s)+e)}.\label{eqn:regret for Exp3}
\end{align}
\end{lemma}

\begin{remark}
\label{remark:condition for Exp3}
As argued in \cite{auer2002nonstochastic,arora2012online}, the regret bound in \eqref{eqn:regret for Exp3} holds under the assumption that for any $k\in[N_s]$, $y_{i_k}^k$ does not depend on the previous actions $i_1,,\dots,i_{k-1}$ chosen by the {\bf Exp3.S} algorithm. Other than this assumption, $y_i^k$ can be any real number in $[y_a,y_b]$, and no statistical assumption is made on $y_i^k$. 
{\color{black} Also note that the random choices $i_1,\dots,i_{N_s}$ in line~3 in Algorithm~\ref{alg:exp3.s} ensure that in each episode $k\in[N_s]$, with some probability, the algorithm explores a new action or commits to the action that gives the lowest cost so far. Lemma~\ref{thm:regret for exp3.s} shows that such choices of $i_1,\dots,i_{N_s}$ yield sublinear regret in $N_s$ against an arbitrary benchmark $j^{N_s}$. }
\end{remark}


\subsection{Certainty Equivalence Approach}\label{sec:ce LQR}
In this subsection, we assume that a set $\CS\subseteq\G$ (with $|\CS|= H$) of actuators is selected and fixed for episode $k\in[N]$. We now describe our design of the corresponding control $u_{\CS}^k=(u_{0,\CS}^k,\dots,u_{T-1,\CS})$, based on the certainty equivalence approach \cite{mania2019certainty,cassel2020logarithmic}. First, {\color{black}conditioning on the set $\CS$ of selected actuators for episode $k\in[N]$, Eq.~\eqref{eqn:optimal control law} states that the corresponding optimal control is the linear state-feedback control given by} $\tilde{u}_{t,\CS}^k=K_{t,\CS}^kx_t^k$ for all $t\in\{0,\dots,T-1\}$, where the control gain matrix $K_{t,\CS}^k$ is obtained from Eq.~\eqref{eqn:control gain} (using the cost matrices $Q_k$ and $R_k$ in episode $k\in[N]$). Since the system matrices $A$ and $B$ are unknown, the certainty equivalence approach leverages estimates of the system matrices, denoted as $\hat{A}$ and $\hat{B}$,\footnote{The estimates $\hat{A}$ and $\hat{B}$ are obtained by some system identification method, using data samples from the system trajectory; we will elaborate more on the system identification part later.} in order to compute the control gain matrix \cite{mania2019certainty,cassel2020logarithmic}. For any $t\in\{0,1,\dots,T-1\}$, the certainty equivalent controller for the $k$th episode of Problem~\eqref{eqn:LQR obj} is given by 
\begin{align}
\label{eqn:ce u}
u_{t,\CS}^k&=\hat{K}_{t,\CS}^kx_t^{k},\\
\label{eqn:control gain S_t est}
\hat{K}_{t,\CS}^k &= -\big(\hat{B}_{\CS}^{\top}\hat{P}_{t+1,\CS}\hat{B}_{\CS}+ R_{\CS}^k\big)^{-1}\hat{B}_{\CS}^{\top}\hat{P}_{k+1,\CS}\hat{A}\\
\label{eqn:recursion for P_k(S_t) est}
\hat{P}_{t,\CS}^k &= Q^k + \hat{A}^{\top}\hat{P}_{t+1,\CS}^k\hat{A}-\hat{A}^{\top}\hat{P}_{t+1,\CS}^k\hat{B}_{\CS}\big(\hat{B}_{\CS}^{\top}\hat{P}_{t+1,\CS}^k\hat{B}_{\CS}+R_{\CS}^k\big)^{-1}\hat{B}_{\CS}^{\top}\hat{P}_{t+1,\CS}^k\hat{A},
\end{align}
where $\hat{P}_{t,\CS}^k\in\BS_+^n$ and~(\ref{eqn:recursion for P_k(S_t) est}) is initialized with $\hat{P}_{T,\CS}^k=Q_f^k$. 

Next, we characterize the performance of the resulting certainty equivalent controller, which naturally depends on the estimation errors $\norm{\hat{A}-A}$ and $\norm{\hat{B}-B}$. Similarly to \eqref{eqn:opt LQR cost S_t}, we denote the expected cost corresponding to $\CS$ and $u_{t,\CS}^k=\hat{K}_{t,\CS}^kx^k_t$ for the $k$th episode as
\begin{equation}
\label{eqn:episode t ce cost}
\hat{J}_k(\CS)=\E[J_k(\CS,u_{\CS}^k)],
\end{equation}
where $J_k(\CS,u_{\CS}^k)$ is defined in Eq.~\eqref{eqn:episode t cost}. One can show that the following expression for $\hat{J}_k(\CS)$ holds \cite[Chapter~3]{bertsekas2017dynamic}: 
\begin{equation}
\label{eqn:exp for J_t hat}
\hat{J}_k(\CS)=\E\big[x_0^{k\top}\tilde{P}_{0,\CS}^kx_0^k\big]+\sum_{t=0}^{T-1}\Tr(\tilde{P}_{t+1,\CS}^kW),
\end{equation}
where $\tilde{P}_{t,\CS}^k$ satisfies the following recursion with $\tilde{P}_{T,\CS}^k=Q_f^k$
\begin{equation}
\label{eqn:recursion for P_k(S_t) tilde}
\tilde{P}_{t,\CS}^k = Q^k + \hat{K}_{t,\CS}^{k\top}R^k_{\CS}\hat{K}_{t,\CS}^k +(A+B_{\CS}\hat{K}_{k,\CS})^{k\top}P_{t+1,\CS}^k(A+B_{\CS}\hat{K}_{t,\CS}^k).
\end{equation}

{\color{black} We now 
upper bound $\hat{J}_k(\CS)-J_k(\CS)$} in terms of the estimation error in $\hat{A}$ and $\hat{B}$, where $J_k(\CS)$ is defined in Eq.~\eqref{eqn:opt LQR cost S_t}. Note that both the optimal controller (with gain matrix $K_{t,\CS}^k$ given by Eq.~\eqref{eqn:optimal control law}) and the certainty equivalent controller (with gain matrix $\hat{K}_{t,\CS}^k$ given by Eq.~\eqref{eqn:control gain S_t est}) are time-varying for the finite-horizon setting. 
In contrast, both the optimal controller \cite{bertsekas2017dynamic} and the certainty equivalent controller proposed in \cite{mania2019certainty,cassel2020logarithmic} are time-invariant for the infinite-horizon setting, which are obtained from steady-state solutions to DAREs. Hence, 
our analysis for the certainty equivalence approach for learning finite-horizon LQR will be more challenging than that in \cite{mania2019certainty,cassel2020logarithmic} for learning infinite-horizon LQR. To proceed, supposing the estimation error satisfies that $\norm{A-\hat{A}}\le\varepsilon$ and $\norm{B-\hat{B}}\le\varepsilon$ with $\varepsilon\in\R_{>0}$, we provide upper bounds on $\norm{K_{t,\CS}^k-\hat{K}_{t,\CS}^k}$ and $\norm{P_{t,\CS}^k-\hat{P}_{t,\CS}^k}$, where  $P_{t,\CS}^k$ (resp., $\hat{P}_{t,\CS}^k$) is given by Eq.~\eqref{eqn:recursion for P_k} (resp., Eq.~\eqref{eqn:recursion for P_k(S_t) est}). We need the following mild assumption. 
\begin{assumption}
\label{ass:cost matrices}
We assume that $\sigma_n(Q^k)\ge1$ and $\sigma_m(R^k)\ge1$ for all $k\in[N]$.
\end{assumption}
In order to simplify the notations in the sequel, we denote
\begin{align}
\Gamma_{\CS}&=\max_{k\in[N],t\in[T]}\Gamma_{t,\CS}^k,\label{eqn:def of Gamma S_t}\\
\tilde{\Gamma}_{\CS}&=1+\Gamma_{\CS},\label{eqn:def of Gamma S_t tilde}
\end{align}
where $\Gamma_{t,\CS}^k=\max\big\{\norm{A},\norm{B},\norm{P_{t,\CS}^k},\norm{K_{t-1,\CS}^k}\big\}$.
Moreover, we denote
\begin{equation}
\label{eqn:max sigma_1 Q and R}
\begin{split}
\sigma_Q&=\max\big\{\max_{k\in[N]}\sigma_1(Q^k),\max_{k\in[N]}\sigma_1(Q_f^k)\big\},\\
\sigma_R&=\max_{k\in[N]}\sigma_1(R^k).
\end{split}
\end{equation}
We have the following result; the proof can be found in Appendix~\ref{app:ce approach}.
\begin{lemma}
\label{lemma:mismatch of K_hat and P_hat}
Consider any $\CS\subseteq\G$, any $k\in[N]$ and any $t\in[T]$. Let $\varepsilon\in\R_{\ge0}$ and $D\in\R_{\ge0}$ with $\varepsilon\le 1$ and $D\ge1$. Suppose that $\norm{A-\hat{A}}\le\varepsilon$, $\norm{B_{\CS}-\hat{B}_{\CS}}\le\varepsilon$, and $\norm{P_{t,\CS}^{k}-\hat{P}_{t,\CS}^{k}}\le D\varepsilon$, and that Assumption~\ref{ass:cost matrices} holds. Then,
\begin{align}
&\norm{K_{t-1,\CS}^k-\hat{K}_{t-1,\CS}^k}\le3\tilde{\Gamma}_{\CS}^3 D\varepsilon,\label{eqn:est error of K_hat}\\
&\norm{P_{t-1,\CS}^k-\hat{P}_{t-1,\CS}^k}\le44\tilde{\Gamma}_{\CS}^9\sigma_RD\varepsilon.\label{eqn:est error of P_hat}
\end{align}
\end{lemma}
We make the following assumption on the controllability of the pair $(A,B)$ similar to \cite{cohen2019learning,mania2019certainty,dean2018regret}.
\begin{assumption}
\label{ass:controllability}
For any $\CS\subseteq\G$ with $|\CS|=H$, we assume that the pair $(A,B_{\CS})$ in system~\eqref{eqn:LTI} satisfies that $\sigma_1(\C_{\ell,\CS})\ge\nu$, where $\ell\in[n-1]$, $\nu\in\R_{>0}$ and
$\C_{\ell,\CS}\triangleq\begin{bmatrix}B_{\CS} & AB_{\CS} & \cdots & A^{\ell-1}B_{\CS}\end{bmatrix}.$
\end{assumption}
If Assumption~\ref{ass:controllability} is satisfied, we say that $(A,B_{\CS})$ is $(\ell,\nu)$-controllable \cite{mania2019certainty}. 
Note that if $(A,B_{\CS})$ is controllable, $(A,B_{\CS})$ can be $(\ell,\nu)$-controllable for some $\ell\in[n-1]$ that is much smaller than $n$. 
One can also check that a sufficient condition for Assumption~\ref{ass:controllability} to hold is that for any actuator $s\in\G$, the pair $(A,B_s)$ is $(\ell,\nu)$-controllable. 
Denoting
\begin{equation*}
\label{eqn:def of C_hat}
\hat{\C}_{\ell,\CS}=\begin{bmatrix}\hat{B}_{\CS} & \hat{A}\hat{B}_{\CS} & \cdots & \hat{A}^{\ell-1}\hat{B}_{\CS}\end{bmatrix}\quad\forall \CS\subseteq\G,
\end{equation*}
we have the following lower bound on $\sigma_n(\hat{\C}_{\ell,\CS})$.
\begin{lemma}\cite[Lemma~6]{mania2019certainty}
\label{lemma:lower bound on sigma_1 C_hat}
Consider any $\CS\subseteq\G$. Suppose that $\norm{A-\hat{A}}\le\varepsilon$ and $\norm{B_{\CS}-\hat{B}_{\CS}}\le\varepsilon$, where $\varepsilon\in\R_{\ge0}$. Under Assumption~\ref{ass:controllability}, $\sigma_n(\hat{\C}_{\ell,\CS})\ge\nu-\varepsilon\ell^{\frac{3}{2}}\beta^{\ell-1}(\norm{B_{\CS}}+1)$, where $\beta\triangleq\max\{1,\varepsilon+\norm{A}\}$.
\end{lemma}
{\color{black} Lemma~\ref{lemma:lower bound on sigma_1 C_hat} states that if $\varepsilon$ is small enough, then $\sigma_n(\hat{\C}_{\ell,\CS})>0$, i.e., $\rank(\hat{\C}_{\ell,\CS})=n$ and the pair $(\hat{A},\hat{B}_{\CS})$ is controllable. We have the following result proved in Appendix~\ref{app:ce approach}.}
\begin{lemma}
\label{lemma:bound on est error of P_hat 1}
Consider any $\CS\subseteq\G$ with $|\CS|=H$ and any $k\in[N]$. If Assumptions~\ref{ass:cost matrices}-\ref{ass:controllability} hold,  $\norm{A-\hat{A}}\le\varepsilon$ and $\norm{B_{\CS}-\hat{B}_{\CS}}\le\varepsilon$, where $\varepsilon\in\R_{\ge0}$, then, for any $t\in\{T-\gamma\ell:\gamma\in\Z_{\ge0},\gamma\ell\le T\}$, and with $\beta=\max\{1,\varepsilon+\norm{A}\}$, it holds that 
\begin{equation}
\label{eqn:bound on est error of P_hat}
\norm{P_{t,\CS}^k-\hat{P}_{t,\CS}^k}\le\mu_{t,\CS}^k\varepsilon,
\end{equation}
under the assumption that $\mu_{t,\CS}^k\varepsilon\le1$ with
\begin{equation}
\label{eqn:def of mu}
\mu_{t,\CS}^k\triangleq32\ell^{\frac{5}{2}}\beta^{2(\ell-1)}(1+\nu^{-1})(1+\norm{B_{\CS}})^2\norm{P_{t,\CS}^{k}}\max\{\sigma_Q,\sigma_R\}.
\end{equation}
\end{lemma}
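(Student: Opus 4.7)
The plan is to prove \eqref{eqn:bound on est error of P_hat} by induction on $\gamma$, sweeping through times $k = N - \gamma\ell$ for $\gamma = 0,1,\dots$ with $\gamma\ell \le N$. The base case $\gamma = 0$ is immediate: since $P_{N,\CS}^{(t)} = Q_f^{(t)} = \hat{P}_{N,\CS}^{(t)}$, the difference is zero and the claimed bound holds trivially for any $\mu_{N,\CS}^{(t)}\ge0$.

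For the inductive step, assume the bound holds at $k = N - \gamma\ell$; I want to bound $\norm{P_{k-\ell,\CS}^{(t)} - \hat{P}_{k-\ell,\CS}^{(t)}}$. The most direct approach, iterating Lemma~\ref{lemma:mismatch of K_hat and P_hat} across the $\ell$ consecutive Riccati steps from $k$ back to $k-\ell$, would amplify the perturbation by $(20\tilde{\Gamma}_{\CS}^9\sigma_R)^\ell$, which is exponential in $\ell$ and incompatible with the polynomial-in-$\ell$ factor $\ell^{5/2}\beta^{2(\ell-1)}$ in $\mu_{k-\ell,\CS}^{(t)}$. To circumvent this I would treat the $\ell$-step backward Riccati map as a single operator and exploit the controllability structure directly.

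Specifically, by the matrix inversion lemma, one backward Riccati step can be written in information form as $P_{k-1,\CS}^{(t)} = A^\top((P_{k,\CS}^{(t)})^{-1} + B_{\CS}(R_{\CS}^{(t)})^{-1}B_{\CS}^\top)^{-1}A + Q^{(t)}$ whenever $P_{k,\CS}^{(t)}$ is invertible, which is guaranteed by $Q_f^{(t)} \succ 0$ (Assumption~\ref{ass:cost matrices}). Unrolling this identity $\ell$ times produces a closed-form expression for $P_{k-\ell,\CS}^{(t)}$ in which the dependence on $P_{k,\CS}^{(t)}$ is filtered through the controllability Gramian $\C_{\ell,\CS}\C_{\ell,\CS}^\top$, whose smallest eigenvalue is at least $\nu^2$ by Assumption~\ref{ass:controllability}. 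Writing the analogous unrolling for $\hat{P}_{k-\ell,\CS}^{(t)}$, with $\hat{\C}_{\ell,\CS}$ shown to be well conditioned via Lemma~\ref{lemma:lower bound on sigma_1 C_hat} for sufficiently small $\varepsilon$, and subtracting the two expressions, one applies the identity $X^{-1} - Y^{-1} = X^{-1}(Y - X)Y^{-1}$ together with the telescoping estimates $\norm{A^j - \hat{A}^j} \le j\beta^{j-1}\varepsilon$ for $j \le \ell$ and $\norm{B_{\CS} - \hat{B}_{\CS}} \le \varepsilon$ to obtain $\norm{P_{k-\ell,\CS}^{(t)} - \hat{P}_{k-\ell,\CS}^{(t)}} \le \mu_{k-\ell,\CS}^{(t)}\varepsilon$ after matching constants with \eqref{eqn:def of mu}.

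The main obstacle will be the careful perturbation bookkeeping on the $\ell$-step-unrolled Riccati expression, making sure that all contributions combine to give the precise polynomial-in-$\ell$ factor $\ell^{5/2}\beta^{2(\ell-1)}$ and the controllability factor $(1+\nu^{-1})(1+\norm{B_{\CS}})^2\max\{\sigma_Q,\sigma_R\}$ appearing in $\mu_{k-\ell,\CS}^{(t)}$, and verifying that the smallness condition $\mu_{k,\CS}^{(t)}\varepsilon \le 1$ is strong enough to activate the Gramian lower bound from Lemma~\ref{lemma:lower bound on sigma_1 C_hat} for the unrolled $\hat{\C}_{\ell,\CS}$. This is the finite-horizon adaptation of the argument in Lemma~5 of \cite{mania2019certainty}, where the same Gramian-filtering trick was used for the infinite-horizon DARE; the extra care needed here lies in handling the non-stationarity of $P_k$ across the $\ell$ Riccati iterations and in propagating the bound block-by-block through the induction.
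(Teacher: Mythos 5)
Your proposal correctly diagnoses the central difficulty---that iterating Lemma~\ref{lemma:mismatch of K_hat and P_hat} over $\ell$ consecutive Riccati steps inflates the error by $(20\tilde{\Gamma}_{\CS}^9\sigma_R)^{\ell}$, which cannot produce the polynomial-in-$\ell$ factor in $\mu_{k,\CS}^{(t)}$---and correctly identifies $(\ell,\nu)$-controllability as the mechanism that must be exploited over a block of $\ell$ steps. However, the route you propose has a genuine gap at its core. The ``information form'' recursion $P_{k-1}=Q^{(t)}+A^{\top}\bigl((P_{k})^{-1}+B_{\CS}(R^{(t)}_{\CS})^{-1}B_{\CS}^{\top}\bigr)^{-1}A$ does \emph{not} unroll into a closed form in which $P_{k}$ is filtered through the Gramian $\C_{\ell,\CS}\C_{\ell,\CS}^{\top}$: the additive $Q^{(t)}$ term re-enters outside the inverse at every one of the $\ell$ steps (and the clean unrolling would additionally require inverting $A$), so the object you plan to perturb does not exist in the form you describe. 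Moreover, your induction is structured so that the error at $k-\ell$ is derived from the error at $k$; without first proving that the $\ell$-step Riccati map is (quantitatively) non-expansive in its $P$-argument, this block-by-block propagation compounds over the $\varphi=N/\ell$ blocks and would yield a constant growing with $N-k$, whereas $\mu_{k,\CS}^{(t)}$ in \eqref{eqn:def of mu} depends on $N-k$ only through $\norm{P_{k,\CS}^{(t)}}$.

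The paper avoids both issues by not doing an algebraic perturbation of the Riccati map at all, and in fact not doing an induction: for each grid point $k=\gamma\ell$ it interprets $x^{\top}P_{\gamma\ell,\CS}^{(t)}x$ and $x^{\top}\hat{P}_{\gamma\ell,\CS}^{(t)}x$ as optimal values of two noiseless finite-horizon LQR problems over $[\gamma\ell,N]$ sharing the same terminal cost $Q_f^{(t)}$, and bounds their difference one-sidedly by exhibiting a feasible (suboptimal) input for the $(\hat{A},\hat{B}_{\CS})$ system that forces its trajectory to coincide with the true optimal trajectory at every multiple of $\ell$; the corrective input needed every $\ell$ steps is computed by inverting $\hat{\C}_{\ell,\CS}$, which is where Lemma~\ref{lemma:lower bound on sigma_1 C_hat} and the factors $\ell^{5/2}\beta^{2(\ell-1)}(1+\nu^{-1})(1+\norm{B_{\CS}})^2\norm{P_{k,\CS}^{(t)}}\max\{\sigma_Q,\sigma_R\}$ actually come from. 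Reversing the roles of the two systems gives the other direction. If you want to pursue your algebraic route, you would need to replace the claimed Gramian-filtered closed form with a genuine contraction estimate for the $\ell$-step Riccati operator (e.g.\ in the Thompson metric), which is substantially more machinery than what you have sketched; otherwise I recommend adopting the trajectory-imitation comparison.
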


Let us further denote
\begin{equation}
\label{eqn:mu_S}
\mu_{\CS}=32\ell^{\frac{5}{2}}\tilde{\beta}^{2(\ell-1)}(1+\nu^{-1})\tilde{\Gamma}_{\CS}^3\max\{\sigma_Q,\sigma_R\},
\end{equation}
where $\tilde{\beta}=1+\norm{A}$. Now, combining Lemmas~\ref{lemma:mismatch of K_hat and P_hat} and~\ref{lemma:bound on est error of P_hat 1} yields the following result, which upper bounds $\norm{K_{t,\CS}^k-\hat{K}_{t,\CS}^k}$ and $\norm{P_{t,\CS}^k-\hat{P}_{t,\CS}^k}$ for all $t$; the proof can be found in Appendix~\ref{app:ce approach}.
\begin{proposition}
\label{prop:bound on est error of P_hat}
Consider any $\CS\subseteq\G$ with $|\CS|=H$. Suppose that Assumptions~\ref{ass:cost matrices}-\ref{ass:controllability} hold, and that $\norm{A-\hat{A}}\le\varepsilon$, $\norm{B_{\CS}-\hat{B}_{\CS}}\le\varepsilon$, where $\varepsilon\in\R_{\ge0}$ and $\mu_{\CS}\varepsilon\le1$. Then, for any $t\in\{0,1,\dots,T\}$, it holds that
\begin{equation}
\label{eqn:bound on est error of P_hat general}
\norm{P_{t,\CS}^k-\hat{P}_{t,\CS}^k}\le(44\tilde{\Gamma}_{\CS}^9\sigma_R)^{\ell-1}\mu_{\CS}\varepsilon, 
\end{equation}
Moreover, for any $t\in\{0,1,\dots,T-1\}$, it holds that
\begin{equation}
\label{eqn:est error of K_hat general}
\norm{K_{t,\CS}^k-\hat{K}_{t,\CS}^k}\le3\tilde{\Gamma}_{\CS}^3 (44\tilde{\Gamma}_{\CS}^9\sigma_R)^{\ell-1}\mu_{\CS}\varepsilon.
\end{equation}
\end{proposition}

{\color{black} We are now in place to upper bound $\hat{J}_k(\CS)-J_k(\CS)$.} 
We begin with the following result; the proof can be found in Appendix~\ref{app:ce approach}.
\begin{lemma}
\label{lemma:J_hat minus J}
Consider any $\CS\subseteq\G$ and any $k\in[N]$. Let $x_t^k$ be the state corresponding to the certainty equivalence control $u^k_{t,\CS}=\hat{K}_{t,\CS}^kx_t^k$, i.e., $x^k_{t+1}=(A+B_{\CS}\hat{K}_{t,\CS}^k)x^k_t+w^k_t$, where $w^k_t$ is the zero-mean white Gaussian noise process with covariance $W$ for all $k$. Let $\Delta K_{t,\CS}^k\triangleq \hat{K}_{t,\CS}^{k}-K_{t,\CS}^{k}$. Then,
\begin{equation}
\label{eqn:J_hat minus J}
\hat{J}_k(\CS)-J_k(\CS)=\sum_{t=0}^{T-1}\E\Big[x_t^{k\top}\Delta \hat{K}_{t,\CS}^{k\top}(R^k_{\CS}+B_{\CS}^{\top}P_{t,\CS}B_{\CS})\Delta \hat{K}_{t,\CS}^kx_t^k\Big].
\end{equation}
\end{lemma}

To proceed, consider any $\CS\subseteq\G$ and any $k\in[N]$. For any $t_1,t_2\in\{0,1,\dots,T\}$ with $t_2\ge t_1$, we use $\Psi_{t_2,t_1}^{k}(\CS)$ to denote the state transition matrix corresponding to $A+B_{\CS}K_{t,\CS}^{k}$, i.e., 
\begin{equation}
\label{eqn:Psi}
\Psi_{t_2,t_1}^{k}(\CS)=(A+B_{\CS}K_{t_2-1,\CS}^{k})(A+B_{\CS}K_{t_2-2,\CS}^{k})\cdots(A+B_{\CS}K_{t_1,\CS}^{k}),
\end{equation}
and $\Psi_{t_2,t_1}^{k}(\CS)\triangleq I$ if $t_1=t_2$, where $K_{t,\CS}^{k}$ is given by Eq.~\eqref{eqn:control gain}. Similarly, we denote
\begin{equation}
\label{eqn:Psi_hat}
\hat{\Psi}_{t_2,t_1}^{k}(\CS)=(A+B_{\CS}\hat{K}_{t_2-1,\CS}^{k})\times\cdots\times(A+B_{\CS}\hat{K}_{t_1,
\CS}^{k}),
\end{equation}
and $\hat{\Psi}_{t_2,t_1}^{k}(\CS)\triangleq I$ if $t_1=t_2$, where $\hat{K}_{t,\CS}^{k}$ is given by Eq.~\eqref{eqn:control gain S_t est}. One can now prove the following result, which shows that the state transition matrix $\Psi_{t_2,t_1}^k(\CS)$ is exponentially stable; a proof of the result can be found in \cite{anderson1981detectability,zhang2021regret}.
\begin{lemma}
\label{lemma:bound on norm of Phi}
Consider any $S\subseteq\G$ with $|\CS|=H$ and any $k\in[N]$. Supposing Assumptions~\ref{ass:cost matrices}-\ref{ass:controllability} hold. Then, there exist finite constants $\zeta_{\CS}\in\R_{\ge1}$ and $0<\eta_{\CS}<1$ such that $\norm{\Psi_{t_2,t_1}^{k}(\CS)}\le\zeta_{\CS}\eta_{\CS}^{t_2-t_1}$ for all $t_1,t_2\in\{0,1,\dots,T\}$ with $t_2\ge t_1$.
\end{lemma}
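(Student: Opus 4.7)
The plan is a finite-horizon Lyapunov argument that uses the Riccati matrices themselves as a time-varying Lyapunov function. First, I would rewrite the Riccati recursion~\eqref{eqn:recursion for P_k(S_t)} in the closed-loop form
\[
P_{k,\CS}^{(t)} = Q^{(t)} + K_{k,\CS}^{(t)\top} R^{(t)}_{\CS} K_{k,\CS}^{(t)} + (A+B_{\CS} K_{k,\CS}^{(t)})^{\top} P_{k+1,\CS}^{(t)}(A+B_{\CS} K_{k,\CS}^{(t)}),
\]
which follows after a few lines of algebra from substituting Eq.~\eqref{eqn:control gain S_t} into Eq.~\eqref{eqn:recursion for P_k(S_t)} and completing the square. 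Dropping the positive semidefinite term $K_{k,\CS}^{(t)\top} R^{(t)}_{\CS} K_{k,\CS}^{(t)}$ and invoking Assumption~\ref{ass:cost matrices}(b), which guarantees $Q^{(t)} \succeq I$, gives
\[
(A+B_{\CS} K_{k,\CS}^{(t)})^{\top} P_{k+1,\CS}^{(t)}(A+B_{\CS} K_{k,\CS}^{(t)}) \preceq P_{k,\CS}^{(t)} - I \preceq (1 - 1/\Gamma_{\CS}) P_{k,\CS}^{(t)},
\]
where the last step uses $P_{k,\CS}^{(t)} \preceq \Gamma_{\CS} I$ from Eq.~\eqref{eqn:def of Gamma S_t}. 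This is the one-step Lyapunov contraction.

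Second, I would iterate this contraction along the closed-loop trajectory. For an arbitrary $x_{k_1}$ and $x_k = \Phi_{k,k_1}^{(t)}(\CS) x_{k_1}$ with $k_1 \le k \le k_2$, the inequality telescopes to
\[
x_{k_2}^{\top} P_{k_2,\CS}^{(t)} x_{k_2} \le (1 - 1/\Gamma_{\CS})^{k_2-k_1} x_{k_1}^{\top} P_{k_1,\CS}^{(t)} x_{k_1} \le \Gamma_{\CS} (1 - 1/\Gamma_{\CS})^{k_2-k_1} \norm{x_{k_1}}^2.
\]
To turn the left-hand side into $\norm{x_{k_2}}^2$ I need a uniform lower bound on $\sigma_n(P_{k_2,\CS}^{(t)})$. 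For $k_2 < N$ this is automatic from $P_{k_2,\CS}^{(t)} \succeq Q^{(t)} \succeq I$, and for $k_2 = N$ it reduces to $\sigma_n(Q_f^{(t)}) > 0$, which is exactly where Assumption~\ref{ass:cost matrices}(a) enters. Since $T$ and $N$ are finite, the quantity $\underline{\sigma}_{\CS} \triangleq \min_{t \in [T]} \min_{k \in \{0,\dots,N\}} \sigma_n(P_{k,\CS}^{(t)})$ is strictly positive and depends only on $\CS$.

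Finally, setting $\zeta_{\CS} = \sqrt{\Gamma_{\CS}/\underline{\sigma}_{\CS}}$ and $\eta_{\CS} = \sqrt{1 - 1/\Gamma_{\CS}}$ and taking a supremum over $x_{k_1}$ yields $\norm{\Phi_{k_2,k_1}^{(t)}(\CS)} \le \zeta_{\CS} \eta_{\CS}^{k_2-k_1}$. The conditions $\zeta_{\CS} \ge 1$ and $\eta_{\CS} \in (0,1)$ follow from $\Gamma_{\CS} \ge \underline{\sigma}_{\CS}$ and $\Gamma_{\CS} > 1$, both of which are built in since $P_{k,\CS}^{(t)} \succeq I$ for $k < N$. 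The step I expect to require the most care is handling the terminal instant $k_2 = N$: the naive bound $P_{k_2,\CS}^{(t)} \succeq I$ no longer holds there, and one must instead appeal to Assumption~\ref{ass:cost matrices}(a) together with the finiteness of $T$ to secure a strictly positive $\underline{\sigma}_{\CS}$. Everything else is a routine finite-horizon specialization of the classical Lyapunov-decrease argument associated with the discrete-time Riccati equation.
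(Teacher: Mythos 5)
Your proof is correct, but it takes a genuinely different route from the paper's. The paper establishes this lemma via a ``convergence to steady state'' argument: under Assumption~\ref{ass:controllability} the finite-horizon gains $K_{k,\CS}^{(t)}$ converge to the stabilizing infinite-horizon gain $K_{\CS}^{(t)}$ as $N-k\to\infty$, Gelfand's formula gives geometric decay of the powers of $A+B_{\CS}K_{\CS}^{(t)}$, a perturbation lemma for products of matrices (Lemma~\ref{lemma:power of perturbed matrix}) transfers that decay to the time-varying products for all indices at least $N(\delta)$ steps from the terminal time, and a separate quantitative convergence-rate estimate (Lemma~\ref{lemma:convergence rate of K_k}) controls the finitely many remaining indices and bounds the resulting constant. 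Your argument instead uses $x\mapsto x^{\top}P_{k,\CS}^{(t)}x$ as a time-varying Lyapunov function: the closed-loop form of the Riccati recursion together with $Q^{(t)}\succeq I$ (Assumption~\ref{ass:cost matrices}(b)) and $P_{k,\CS}^{(t)}\preceq\Gamma_{\CS}I$ gives a one-step decrease by the factor $1-1/\Gamma_{\CS}$, and $Q_f^{(t)}\succ 0$ (Assumption~\ref{ass:cost matrices}(a)) supplies the lower bound needed at $k_2=N$. This is more elementary --- it avoids Gelfand's formula, the perturbation lemma, and the convergence-rate lemma entirely, and it does not even invoke Assumption~\ref{ass:controllability} --- and it produces fully explicit constants $\zeta_{\CS}=\sqrt{\Gamma_{\CS}/\underline{\sigma}_{\CS}}$ and $\eta_{\CS}=\sqrt{1-1/\Gamma_{\CS}}$, which would make the parameter dependence in \eqref{eqn:parameters} more transparent. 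The tradeoff is that your contraction rate degrades as $\Gamma_{\CS}$ grows (it approaches $1$ when $\norm{P_{k,\CS}^{(t)}}$ is large), whereas the paper's $\eta_{\CS}$ is anchored to the spectral radius of the steady-state closed loop and can be much smaller; since the lemma only asserts existence of some $\eta_{\CS}\in(0,1)$, this costs nothing here. Two cosmetic repairs: the maximum in \eqref{eqn:def of Gamma S_t} runs over $k\in[N]$ and therefore omits $\norm{P_{0,\CS}^{(t)}}$, so you should enlarge your constant to cover $k=0$ (needed both for the contraction step at $k=0$ and for the initial bound $x_{k_1}^{\top}P_{k_1,\CS}^{(t)}x_{k_1}\le\Gamma_{\CS}\norm{x_{k_1}}^2$ when $k_1=0$); and $P_{k,\CS}^{(t)}\succeq I$ only yields $\Gamma_{\CS}\ge 1$ rather than $\Gamma_{\CS}>1$, so in the degenerate case $\Gamma_{\CS}=1$ you should replace $\Gamma_{\CS}$ by $\tilde{\Gamma}_{\CS}=1+\Gamma_{\CS}$ to keep $\eta_{\CS}>0$. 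Neither affects the substance.
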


The following result characterizes the stability of $\hat{\Psi}_{t_2,t_1}^k(\CS)$; the proof follows from Lemma~\ref{lemma:bound on norm of Phi} and can be found in Appendix~\ref{app:ce approach}.
\begin{lemma}
\label{lemma:bound on Psi_hat}
Consider any $\CS\subseteq\G$ with $|\CS|=H$ and any $t\in[T]$. Suppose Assumptions~\ref{ass:cost matrices}-\ref{ass:controllability} hold, and $\norm{K_{t,\CS}^{k}-\hat{K}_{t,\CS}^{k}}\le\varepsilon$ for all $t\in\{0,1,\dots,T-1\}$, where $\varepsilon\in\R_{>0}$. Then, for all $t_1,t_2\in\{0,1,\dots,T\}$ with $t_2\ge t_1$, $\norm{\hat{\Psi}_{t_2,t_1}^{k}(\CS)}\le\zeta_{\CS}(\frac{1+\eta_{\CS}}{2})^{t_2-t_1}$, under the assumption that $\varepsilon\le\frac{1-\eta_{\CS}}{2\norm{B_{\CS}}\zeta_{\CS}}$, where $\zeta_{\CS}\ge1$ and $0<\eta_{\CS}<1$ are given by Lemma~\ref{lemma:bound on norm of Phi}.
\end{lemma}

Combining Lemmas~\ref{lemma:J_hat minus J} and \ref{lemma:bound on Psi_hat}, and Proposition~\ref{prop:bound on est error of P_hat} yields the following result; the proof is included can be found in Appendix~\ref{app:ce approach}.
\begin{proposition}
\label{prop:bound on J_hat minus J}
Consider any $\CS\subseteq\G$ with $|\CS|=H$ and any $k\in[N]$. Suppose Assumptions~\ref{ass:cost matrices}-\ref{ass:controllability} hold, and $\norm{A-\hat{A}}\le\varepsilon$ and $\norm{B-\hat{B}}\le\varepsilon$, where $\varepsilon\in\R_{>0}$. Then, it holds that
\begin{equation}
\hat{J}_k(\CS)-J_k(\CS)\le \frac{4\min\{n,m_{\CS}\}T\zeta_{\CS}^2}{1-\eta_{\CS}^2}\sigma_1(W)\big(\sigma_R+\Gamma_{\CS}^3\big)\big(3\tilde{\Gamma}_{\CS}^3 (20\tilde{\Gamma}_{\CS}^9\sigma_R)^{\ell-1}\mu_{\CS}\big)^2\varepsilon^2,\label{eqn:upper bound on J_hat minus J}
\end{equation}
under the assumption that $\varepsilon\le\frac{1-\eta_{\CS}}{\zeta_{\CS}\mu_{\CS}}$, where $\zeta_{\CS}\ge1$ and $0<\eta_{\CS}<1$ are given by Lemma~\ref{lemma:bound on norm of Phi}, $J_k(\CS)$ and $\hat{J}_k(\CS)$ are defined in \eqref{eqn:opt LQR cost S_t} and \eqref{eqn:episode t ce cost}, respectively, and $m_{\CS}=\sum_{i\in\CS}m_i$.
\end{proposition}
{\color{black} Hence, supposing the estimation error of $\hat{A},\hat{B}$ can be made small enough, Proposition~\ref{prop:bound on J_hat minus J} bounds the gap between the (expected) costs incurred by the certainty equivalent controller and the optimal controller that knows the system model $A,B$.}

\subsection{Overall Algorithm Design}\label{sec:online as episodic setting}
We introduce the overall algorithm (Algorithm~\ref{alg:online for AS}) for the episodic setting of Problem~\eqref{eqn:LQR obj}, under the assumptions below.
{\color{black}
\begin{assumption}
\label{ass:noise process}
We assume that  (a) for any $k\in[N]$, $\{w_t^k\}_{t=0}^{T-1}$ are i.i.d Gaussian with $\E[w_t^k]=0$ and $\E[w_t^kw_t^{k\top}]=\sigma^2 I$, i.e., $w_t^k\overset{\text{i.i.d.}}{\sim}\N(0,\sigma^2I)$, where $\sigma\in\R_{\ge0}$ is known; (b) for any distinct $t_1,t_2\in\{0,1\dots,T-1\}$ and any distinct $k_1,k_2\in[N]$, the noise terms $w^{k_1}_{t_1}$ and $w^{k_2}_{t_2}$ are independent.
\end{assumption}}

\begin{assumption}
\label{ass:system parameters}
There exist $\G_1,\dots,\G_p$ with $\G_i\subseteq\G$ and $|\G_i|=H$ for all $i\in[p]$ such that $\G=\cup_{i\in[p]}\G_i$ and there is a known stabilizing $K_{\G_i}\in\R^{m_{\G_i}\times n}$ with $\norm{(A+B_{\G_i}K_{\G_i})^t}\le\zeta_0\eta_0^t$ and $\norm{K_{\G_i}}\le\zeta_0$, $\forall t\in\R_{\ge0}$ and  $\forall i\in[p]$, where $p=\lceil m/H \rceil$, $m_{\G_i}=\sum_{j\in\G_i}m_j$, $\zeta_0\in\R_{\ge1}$ and $\eta_0\in\R_{>0}$ with $0<\eta_0<1$.
\end{assumption}

\begin{savenotes}
\begin{algorithm2e}[h]
\SetNoFillComment
\caption{Episodic Setting}\label{alg:online for AS}
\KwIn{Parameters $\tau_1,\lambda,N,T,\bar{y}_b$, and $K_{\G_j}$ for all $j\in[p]$ from Assumption~\ref{ass:system parameters}.}
Initialize $N_{1}=1$.\\
\For{$j=1$ to $p$}{
    Set $N_{j+1}\gets N_{j}+\tau_1$.
}

    \tcc{System identification phase}
    \For{$j=1$ to $p$}{
        \For{$k=N_{j}$ to $N_{j+1}-1$}{
            Select $\CS^k=\G_j$.\\
            Play $u_{\G_j}^{k}$ with $u_{t,\G_j}^{k}\overset{\text{i.i.d.}}{\sim}\CN(K_{\G_j}x^{k}_t,2\sigma^2\eta_0^2I)$,
            $\qquad\qquad\quad\forall t\in\{0,\dots,T-1\}$.
        }
        {\color{black} Obtain $\hat{\Theta}_{\G_j}$ from \eqref{eqn:ls approach}}.
    }
    {\color{black} Obtain $\hat{A}$ by extracting the first $n$ columns from $\hat{\Theta}_{\G_1}$; obtain $\hat{B}$ by extracting the last $m_{\G_j}$ columns from $\hat{\Theta}_{\G_j}$ for all $j\in[p]$ and merging them into $\hat{B}$.}\\
    \tcc{Control phase}
    {\color{black} Initialize an {\bf Exp3.S} subroutine with $N_s=N-N_{p+1}+1$, $\Q=\{\CS\subseteq\G:|\CS|=H\}$, and $\alpha_1,\alpha_2$ according to Lemma~\ref{thm:regret for exp3.s}.\footnote{Note that $h(j^{N_s})$ in Lemma~\ref{thm:regret for exp3.s} is set to be $h(\CS_{\star})$ defined in Eq.~\eqref{eqn:number of switchings}.}}\\ 
    \For{$k=N_{p+1}$ to $N$}{
        {\color{black} Enter the $(k-N_{p+1}+1)$th iteration of the for loop in lines~2-6 in {\bf Exp3.S}; select $\CS^k\in\Q$ according to the probabilities $q_1^k,\dots,q_{|Q|}^k$.}\\
        \For{$t=0$ to $T-1$}{
            Obtain $\hat{K}_{t,\CS^k}^{k}$ using $\hat{A},\hat{B}_{\CS^k}$ via Eq.~\eqref{eqn:control gain S_t est}.\\
            Play $u_{t,\CS^k}^{k}=\hat{K}_{t,\CS^k}^{k}x_t^{k}$.
        }
        {\color{black} Receive the cost $y_{\CS^k}^k=J_k(\CS,u_{\CS}^k)$; follow lines~4-6 in {\bf Exp3.S} with $y_a=-\bar{y}_b$ and $y_b=0$.} 
    }
{\color{black}\KwOut{$\CS^k,u_{\CS^k}^k=(u_{0,\CS^k}^k,\dots,u_{T-1,\CS^k}^k),\forall k\in[N]$.}}
\end{algorithm2e}
\end{savenotes}

Assumption~\ref{ass:noise process}(a) ensures that the noise terms from different episodes are independent. Similarly to \cite{dean2018regret,mania2019certainty,cassel2020logarithmic}, assuming the noise covariance is $W=\sigma^2I$ is only made to ease the presentation; our analysis in the remaining of this paper can be extended to $w_t^k$ with general covariance matrix $W\in\BS_{++}^n$, where the analysis will then depend on $\sigma_1(W)$ and $\sigma_n(W)$. {\color{black} Note that more general noise models are considered in, e.g., \cite{faradonbeh2020input,lale2022reinforcement}, where $\{w_t^k\}_{t=0}^{T-1}$ can be non-stationary and non-Gaussian. We restrict ourselves to the i.i.d. Gaussian noise model of $\{w_t^k\}_{t=0}^{T-1}$ described in Assumption~\ref{ass:noise process}, and leave the extension to the more general noise models to future work.}


Similar assumptions to Assumption~\ref{ass:system parameters} can also be found in \cite{dean2018regret,mania2019certainty,cohen2019learning,cassel2020logarithmic}. Note that under Assumption~\ref{ass:controllability}, the pair $(A,B_{\G_i})$ is controllable for all $i\in[p]$, which guarantees the existence of the $K_{\G_i}$ described in Assumption~\ref{ass:system parameters}. {\color{black}Moreover, the stability of $A+B_{\G_i}K_{\G_i}$ ensures via the Gelfand formula \cite{horn2012matrix} that the finite constants $\zeta_0\ge1$ and $0<\eta_0<1$ exist, which may be computed by the LQR cost of the control $u_t=K_{\G_i}x_t$} \cite{cassel2020logarithmic}. Also note that Assumption~\ref{ass:system parameters} gives us a set of known stabilizing controllers that we can use in the system identification of Algorithm~\ref{alg:online for AS} (see our detailed descriptions below). In fact, using the techniques from \cite{faradonbeh2018finite,chen2020black,lale2022reinforcement}, one can introduce an extra warm-up phase before the system identification phase in Algorithm~\ref{alg:online for AS}, which learns a stabilizing $K_{\G_i}$ for all $i\in[p]$ from the system trajectory. In particular, as shown in \cite{chen2020black}, the extra warm-up phase will incur an extra additive factor $2^{\tilde{O}(n)}$ in the regret of Algorithm~\ref{alg:online for AS} defined in Eq.~\eqref{eqn:regret of A_e}.

Now, we explain the steps in Algorithm~\ref{alg:online for AS}. 

{\bf System identification phase:} In lines~4-9, Algorithm~\ref{alg:online for AS} computes estimates of $A$ and $B$, denoted as $\hat{A}$ and $\hat{B}$, respectively. This is achieved by first iteratively selecting the sets $\G_1,\dots,\G_p$ of actuators and playing the corresponding stabilizing controller given by Assumption~\ref{ass:system parameters} for $\tau_1$ episodes. Formally, for any $j\in[p]$, Algorithm~\ref{alg:online for AS} selects $\CS^k=\G_j$ and plays the control $u_{t,\G_j}^{k}\overset{\text{i.i.d.}}{\sim}\CN(K_{\G_j}x^{k}_t,2\sigma^2\eta_0^2I)$ for all time steps $t\in\{0,\dots,T-1\}$ and all episodes $k\in\{N_{j},\dots,N_{j+1}-1\}$ with $N_{j+1}=N_j+\tau_1$, where  {\color{black} $\tau_1\in\Z_{\ge1}$ is an input to Algorithm~\ref{alg:online for AS} whose value will be specified later.} We assume that $u_{t,\G_j}^k$ is independent of the noise $w_{t^{\prime}}^{k^{\prime}}$ for all $t^{\prime}\in\{0,\dots,T-1\}$ and all $k^{\prime}\in[N]$. For any $j\in[p]$, the estimate $\hat{\Theta}_{\G_j}\in\R^{n\times(n+m_{\G_j})}$ (with $m_{\G_j}=\sum_{i\in\G_j}m_i$) is obtained by solving the following regularized least squares:\footnote{Note that a solution to \eqref{eqn:ls approach} can be obtained recursively as a new data sample from the system trajectory becomes available at each time step \cite{kay1993fundamentals}.}
\begin{equation}
\label{eqn:ls approach}
\hat{\Theta}_{\G_j}\in\argmin_{Y}\Big\{\lambda\norm{Y}_F^2+\displaystyle\sum_{k=N_{j}}^{N_{j+1}-1}\displaystyle\sum_{t=0}^{T-1}\norm{x_{t+1}^{k}-Y z_{t,\G_j}^{k}}^2\Big\},
\end{equation}
where $\lambda\in\R_{>0}$ and
\begin{equation}
\label{eqn:def of z_S}
z_{t,\CS}^k\triangleq\begin{bmatrix}
x_t^{k\top} & u_{t,\CS}^{k\top}
\end{bmatrix}^{\top},
\end{equation}
for all $k\in[N]$, all $t\in\{0,\dots,T-1\}$ and all $\CS\subseteq\G$. For any $j\in[p]$, $\hat{\Theta}_{\G_j}$ can be viewed as an estimate of $\Theta_{\G_j}\triangleq\begin{bmatrix}A & B_{\G_j}\end{bmatrix}$ \cite{abbasi2011regret,cassel2020logarithmic}. Thus, we can obtain estimates of $A$ and $B$, i.e., $\hat{A}$ and $\hat{B}$, respectively, according to line~9 in Algorithm~\ref{alg:online for AS}. 

{\bf Control phase:} For any episode $k\in\{N_{p+1},\dots,N\}$ in lines~11-16 of Algorithm~\ref{alg:online for AS}, the algorithm calls the {\bf Exp3.S} subroutine to select a set $\CS^k$ of actuators, and invokes the certainty equivalence approach described in Section~\ref{sec:ce LQR} to design $u_{t,\CS^k}^k=\hat{K}_{t,\CS^k}^kx_t^k$, $\forall t\in\{0,\dots,T-1\}$, {\color{black} where $\hat{K}_{t,\CS^k}^k$ is computed by Eq.~\eqref{eqn:control gain S_t est} using the estimates $\hat{A},\hat{B}$ obtained from the system identification phase.} {\color{black} Here, the {\bf Exp3.S} subroutine is applied to the MAB instance, where the total number of episodes is $N_s=N-N_{p+1}+1$, the set of all possible actions is $\Q=\{\CS\subseteq\G:|\CS|=H\}$, and the cost associated with each possible action $\CS\in\Q$ in episode $k$ is $y_{\CS}^k=J_k(\CS,u_{\CS}^k)$ defined in Eq.~\eqref{eqn:episode t cost}, where $u_{\CS}^k=(u_{0,\CS}^k,\dots,u_{T-1,\CS}^k)$ with $u_{t,\CS}^k=\hat{K}_{t,\CS}x_t^k$. Thus, each arm in the MAB instance corresponds to a set of actuators with cardinality $H$.} 

Finally, one can check that the running time of each episode in Algorithm~\ref{alg:online for AS} is $O((n+m)^3T+|\Q|T)$, where the factor $(m+n)^3$ is due to the computation of Eq.~\eqref{eqn:control gain S_t est}. Since  $|\Q|={|\G|\choose H}$, Algorithm~\ref{alg:online for AS} is efficient for instances of Problem~\eqref{eqn:LQR obj} with either $|\G|$ (i.e., the total number of candidate actuators) or $H$ (i.e., the cardinality constraint on the set of selected actuators) to be small (or bounded by a constant). Nonetheless, we will later extend our algorithm design to efficiently handle large-scale instances of Problem~\eqref{eqn:LQR obj} in Section~\ref{sec:large scale instances}. 


\begin{remark}
\label{remark:episodic setting oblivious}
Note from Eq.~\eqref{eqn:episode t cost} that the cost of the action chosen by the {\bf Exp3.S} subroutine for any episode $k\in\{N_{p+1},\dots,N\}$ of Algorithm~\ref{alg:online for AS}, i.e., $J_k(\CS^k,u_{\CS^k}^k)$, does not depend on the previous actions $\CS^{N_{p+1}},\dots,\CS^{k-1}$ chosen by the {\bf Exp3.S} subroutine. Thus, we   know from Remark~\ref{remark:condition for Exp3} that the result in Lemma~\ref{thm:regret for exp3.s} can be applied when we analyze the regret of Algorithm~\ref{alg:online for AS} in the next section.
\end{remark}

\section{Regret Analysis for Episodic Setting}\label{sec:regret analysis}
In this section, we aim to provide high probability upper bounds on the regret of Algorithm~\ref{alg:online for AS} defined in Eq.~\eqref{eqn:regret of A_e} for the episodic setting of Problem~\eqref{eqn:LQR obj}. To this end, we first analyze the estimation error of the least squares approach given by \eqref{eqn:ls approach}. For any $j\in[p]$, we denote
\begin{equation}
\label{eqn:def of V}
V_{\G_j}=\lambda I+\sum_{k=N_{j}}^{N_{j+1}-1}\sum_{t=0}^{T-1}z_{t,\G_j}^{k}z_{t,\G_j}^{k\top},
\end{equation}
where $\G_j$ is given by Assumption~\ref{ass:system parameters}, $\lambda\in\R_{>0}$, $N_{j},N_{j+1}$ are given in Algorithm~\ref{alg:online for AS}, and $z_{t,\G_j}^{k}$ is given in Eq.~\eqref{eqn:def of z_S}. We then have the following result; the proof is similar to that of \cite[Lemma~6]{cohen2019learning} and is omitted here for conciseness.
\begin{lemma}
\label{lemma:est error of Theta_i hat}
Consider any $\G_j$ from Assumption~\ref{ass:system parameters}, where $j\in[p]$. Let $\Delta_{\G_j}=\Theta_{\G_j}-\hat{\Theta}_{\G_j}$, where $\Theta_{\G_j}=\begin{bmatrix}A & B_{\G_j}\end{bmatrix}$ and $\hat{\Theta}_{\G_j}$ is given by~\eqref{eqn:ls approach}. Suppose Assumption~\ref{ass:noise process} holds. Then, for any $\delta\in\R$ with $0<\delta<1$, with probability at least $1-\delta$, it holds
\begin{equation*}
\label{eqn:est error lemma upper bound}
\Tr(\Delta_{\G_j}^{\top}V_{\G_j}\Delta_{\G_j})\le4\sigma^2n\log\Big(\frac{n\det(V_{\G_j})}{\delta\det(\lambda I)}\Big) + 2\lambda\norm{\Theta_{\G_j}}_F^2.
\end{equation*}
\end{lemma}
For notational simplicity in the sequel, we further denote
\begin{align}\nonumber
&\vartheta=\max\{\norm{A},\norm{B}\},\ \varepsilon_0=\min_{\CS\subseteq\G,|\CS|=H}\frac{1-\eta_{\CS}}{\zeta_{\CS}\mu_{\CS}},\\\nonumber
&\zeta=\max\big\{\max_{\CS\subseteq\G,|\CS|=H}\zeta_{\CS},\zeta_0\big\},\ \eta=\max\big\{\max_{\CS\subseteq\G,|\CS|=H}\eta_{\CS},\eta_0\big\},\\\nonumber
&\kappa=\max\Big\{\max_{\CS\subseteq\G,|\CS|=H}\Big(\Gamma_{\CS}+\frac{1-\eta_{\CS}}{2\norm{B_{\CS}}\zeta_{\CS}}\Big),\eta_0\Big\},\\
&\Gamma=\max_{\CS\subseteq\G,|\CS|=H}\Gamma_{\CS},\ \tilde{\Gamma}=\Gamma+1, \label{eqn:parameters}
\end{align}
where $\tilde{\Gamma}_{\CS}$ (resp., $\Gamma_{\CS}$) is defined in Eq.~\eqref{eqn:def of Gamma S_t tilde} (resp., \eqref{eqn:def of Gamma S_t}), $\zeta_{\CS}$ and $\eta_{\CS}$ are provided in Lemma~\ref{lemma:bound on norm of Phi},  and $\mu_{\CS}$ is defined in Eq.~\eqref{eqn:mu_S}.\footnote{Under Assumption~\ref{ass:controllability}, one can check via the definition of $\Gamma_{\CS}$ in Eq.~\eqref{eqn:def of Gamma S_t} that $\Gamma$ is independent of $T$ (see, e.g., \cite{anderson1981detectability,zhang2021regret}).} We then have the following result for the regret of Algorithm~\ref{alg:online for AS} defined in Eq.~\eqref{eqn:regret of A_e}, where the results holds for any general benchmark $\CS_{\star}=(\CS_{\star}^1,\dots,\CS_{\star}^N)$ described in Remark~\ref{remark:dynamic regret}. 
\begin{theorem}
\label{thm:regret of as alg}
Suppose that Assumptions~\ref{ass:cost matrices}-\ref{ass:system parameters} hold. Consider any $\delta\in\R_{>0}$ with $0<\delta<1$. Denote 
\begin{equation}
\label{eqn:tau_0}
\tau_0=\frac{160np\Big(\frac{\lambda\vartheta^2}{\sigma^2}+2(n+m)\log\big(\frac{8n}{\delta}(p+\frac{TNz_b}{\lambda})\big)\Big)}{T-1},
\end{equation}
where
\begin{equation}
\label{eqn:z_b}
z_b=\frac{20\zeta_0^2(1+\eta_0)^2\sigma^2}{(1-\eta_0)^2}\big(2(\vartheta^2+1)\eta_0^2m+n\big)\log\frac{8TN}{\delta}.
\end{equation}
In Algorithm~\ref{alg:online for AS}, let 
\begin{align}
&\tau_1=\Big\lceil\max\Big\{\sqrt{N},\frac{\tau_0}{\varepsilon_0^2}\Big\}\Big\rceil,\label{eqn:tau_1}\\
&\bar{y}_b= T(2\sigma_Q+\kappa^2\sigma_R)\frac{4\zeta^2\sigma^2}{(1-\eta)^2}5n\log\frac{8TN}{\delta}.\label{eqn:y_b bar}
\end{align}
Then, for any $N>\tau_1p$, with probability at least $1-\delta$,
\begin{align}
&R_{\A_e}= \tilde{O}(n(m+n)^2p\sqrt{T^2|\Q|h(\CS_{\star})N}),\label{eqn:R_Ae}
\end{align}
where $R_{\A_e}$ is defined in Eq.~\eqref{eqn:regret of A_e}, $h(\CS_{\star})$ is defined in Eq.~\eqref{eqn:number of switchings}, $\Q=\{\CS\subseteq\G:|\CS|=H\}$, and $\tilde{O}(\cdot)$ hides polynomial factors in $\log(|\Q|N),\log((m+n)TN/\delta),\sigma_R,\sigma_Q,\sigma,\kappa,\zeta,\tilde{\Gamma},\ell,\tilde{\beta},(1-\eta)^{-1},\nu^{-1}$, where $\tilde{\beta}=1+\norm{A}$,  $\nu\in\R_{>0},\ell\in[n-1]$ are given in Assumption~\ref{ass:controllability}, and $\sigma_R,\sigma_Q$ are defined in \eqref{eqn:max sigma_1 Q and R}.
\end{theorem}

\subsection{Proof of Theorem~\ref{thm:regret of as alg}}\label{sec:proof of thm 1}
Recalling lines~5-7 in Algorithm~\ref{alg:online for AS}, for any $j\in[p]$ and any $k\in\{N_j,\dots,N_{j+1}-1\}$, one can show that the state of system~\eqref{eqn:LTI with B_i} satisfies that
\begin{equation}
\label{eqn:dynamics of x rewrite}
x_{t+1}^{k}=(A+B_{\G_j}K_{\G_j})x_t^{k}+B_{\G_j}\tilde{w}_t^{k}+w_t^{k},
\end{equation}
for all $t\in\{0,\dots,T-1\}$, where $x_0^{k}=0$, $K_{\G_j}$ is given by Assumption~\ref{ass:system parameters}, and $\tilde{w}_t^{k}\overset{\text{i.i.d.}}{\sim}\CN(0,2\sigma^2\eta_0^2I)$. Also note that $w_t^{k}$ is independent of $w_{t}^{k}$ as we assumed before. For notational simplicity in this proof, denote
\begin{align}
&\tilde{\K}=\{k:N_{j}\le k\le N_{j+1}-1,j\in[p]\}\label{eqn:T_tilde},\\
&\K=[N]\setminus\tilde{\K}=\{N_{p+1},\dots,N\}.\label{eqn:T}
\end{align}
In words, the set $\tilde{\K}$ (resp., $\K$) contains the indices of episodes for the system identification (resp., control) phase in Algorithm~\ref{alg:online for AS}.

Note that $(\CS^1,\dots,\CS^N)$ denotes the sequence of the sets of actuators selected by Algorithm~\ref{alg:online for AS}. From Eq.~\eqref{eqn:regret of A_e}, one can decompose the regret 
as $R_{\A_e} = R_e^1 + R_e^2 + R_e^3 + R_e^4$ with
\begin{align*}
R_e^1 &= \E_{\A_e}\Big[\sum_{k\in\tilde{\K}}J_k(\CS^k,u_{\CS^k}^{k})\Big]-\sum_{k\in\tilde{\K}}J_k(\CS^{k}_{\star}),\\
R_e^2 &= \E_{\A_e}\Big[\sum_{k\in\K}J_k(\CS^k,u_{\CS^k}^{k})\Big]-\sum_{k\in\K}J_k(\CS^{k}_{\star},u_{\CS^{k}_{\star}}^{k}),\\
R_e^3 &= \sum_{k\in\K}\big(J_k(\CS^{k}_{\star},u_{\CS^{k}_{\star}}^{k})-\hat{J}_k(\CS^{k}_{\star})\big),\\
R_e^4 &= \sum_{k\in\K}\big(\hat{J}_k(\CS^{k}_{\star})-J_k(\CS^{k}_{\star})\big),
\end{align*}
where $J_k(\CS^k,u_{\CS^k}^{k})$ is defined in Eq.~\eqref{eqn:episode t cost} with $u_{\CS^k}^{k}$ given by Algorithm~\ref{alg:online for AS}, and $J_k(\CS^{k}_{\star})$ (resp., $\hat{J}_k(\CS^k_{\star})$) is given by \eqref{eqn:opt LQR cost S_t} (resp., \eqref{eqn:exp for J_t hat}). Note that $R_e^2$, $R_e^3$ and $R_e^4$ together correspond to the regret incurred by the exploitation phase in Algorithm~\ref{alg:online for AS}, and $R_e^1$ corresponds to the regret incurred by the system identification phase in Algorithm~\ref{alg:online for AS}. In particular, $R_e^2$ corresponds to the {\bf Exp3.S} subroutine, and $R_e^3,R_e^4$ correspond to the certainty equivalent control subroutine. 

In order to prove the (high probability) upper bound on $R_{\A_e}$, we will provide upper bounds on $R_e^1$, $R_e^2$, $R_e^3$, and $R_e^4$ separately in the sequel. First, considering any $0<\delta<1$, we define the following probabilistic events:
\begin{align*}
\CE_w&=\Big\{\norm{w_{t-1}^{k}}\le\sigma\sqrt{5n\log\frac{8TN}{\delta}},\forall k\in[N],\forall t\in[T]\Big\},\\
\CE_{\tilde{w}}&=\Big\{\norm{\tilde{w}_{t-1}^{k}}\le\eta_0\sigma\sqrt{10m\log\frac{8TN}{\delta}},\forall k\in\tilde{\K},\forall t\in[T]\Big\},\\
\CE_{\Theta}&=\Big\{\Tr(\Delta_{\G_j}^{\top}V_{\G_j}\Delta_{\G_j})\le4\sigma^2n\log\Big(\frac{8np\det(V_{\G_j})}{\delta\det(\lambda I)}\Big) \\
&\qquad\qquad\qquad\qquad\qquad+2\lambda\norm{\Theta_{\G_j}}_F^2,\forall j\in[p]\Big\},\\
\CE_z&=\Big\{\sum_{k=N_{j}}^{N_{j+1}-1}\sum_{t=0}^{T-1}z_{t,\G_j}^{k}z_{t,\G_j}^{k\top}\succeq\frac{(T-1)\tau_1\sigma^2}{80}I,\forall j\in[p]\Big\}.
\end{align*}
Letting 
\begin{equation}
\label{eqn:event E}
\CE = \CE_w\cap\CE_{\tilde{w}}\cap\CE_{\Theta}\cap\CE_z,   
\end{equation}
we have the following result which shows that $\CE$ holds with high probability; the proof can be found in Appendix~\ref{app:regret bound proofs}.
\begin{lemma}
\label{lemma:event E high prob}
For any $0<\delta<1$, the event $\CE$ defined in Eq.~\eqref{eqn:event E} satisfies $\Prob(\CE)\ge1-\delta/2$.
\end{lemma}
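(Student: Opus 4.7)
The plan is to establish $\Prob(\CE) \ge 1-\delta/2$ via a union bound, writing $\CE = \CE_w \cap \CE_{\tilde{w}} \cap \CE_{\Theta} \cap \CE_z$ and arguing that each of the four constituent events fails with probability at most $\delta/8$. The four arguments are of different flavors: the two noise events are standard Gaussian norm tail bounds, $\CE_{\Theta}$ is a direct invocation of Lemma~\ref{lemma:est error of Theta_i hat}, and $\CE_z$ is a persistent-excitation/matrix concentration statement that will carry the bulk of the work.

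For $\CE_w$, each $w_k^{(t)} \sim \CN(0,\sigma^2 I_n)$, so $\|w_k^{(t)}\|^2/\sigma^2$ is $\chi^2_n$-distributed; the standard Laurent--Massart inequality gives $\Prob(\|w_k^{(t)}\| \ge \sigma\sqrt{5n\log(8NT/\delta)}) \le \delta/(8NT)$ provided $\log(8NT/\delta) \ge 1$, and a union bound over the at most $NT$ pairs $(t,k)$ yields $\Prob(\CE_w) \ge 1-\delta/8$. The argument for $\CE_{\tilde{w}}$ is identical up to the covariance $2\sigma^2\kappa^2 I_m$, which is the source of the factor $\kappa\sigma\sqrt{10m\log(8NT/\delta)}$, and the union bound is again over at most $NT$ pairs, giving $\Prob(\CE_{\tilde{w}})\ge 1-\delta/8$. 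For $\CE_{\Theta}$, I would apply Lemma~\ref{lemma:est error of Theta_i hat} with the failure probability $\delta$ there replaced by $\delta/(8p)$, which substitutes $\log(n\det(V_{\G_j}(i))/(\delta\det(\lambda I)))$ with $\log(8np\det(V_{\G_j}(i))/(\delta\det(\lambda I)))$, and then union bound over the $p$ groups $\G_j$ in Assumption~\ref{ass:system parameters} to get $\Prob(\CE_{\Theta}) \ge 1-\delta/8$.

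The main obstacle is $\CE_z$, which asserts a lower bound of the form $\sum_{l,t,k} z_{k,\G_j}^{(t)} z_{k,\G_j}^{(t)\top} \succeq \frac{(N-1)\tau_1 i \sigma^2}{80} I$ uniformly over $i \in [n_e]$ and $j \in [p]$. The approach is to exploit Eq.~\eqref{eqn:dynamics of x rewrite} and write $u_{k,\G_j}^{(t)} = K_{\G_j} x_k^{(t)} + \tilde{w}_k^{(t)}$, so that
\begin{equation*}
z_{k,\G_j}^{(t)} = \begin{bmatrix} I \\ K_{\G_j} \end{bmatrix} x_k^{(t)} + \begin{bmatrix} 0 \\ I \end{bmatrix} \tilde{w}_k^{(t)}.
\end{equation*}
The injected signal at time $k$ has covariance at least $\sigma^2 I_n$ in the state coordinates (from $w_k^{(t)}$) and $2\sigma^2\kappa^2 I_m$ in the input coordinates (from $\tilde{w}_k^{(t)}$), and $\tilde w_k^{(t)}$ is independent of $x_k^{(t)}$, so the conditional covariance of $z_{k,\G_j}^{(t)}$ is uniformly lower bounded in the PSD order by something on the order of $\sigma^2/c$ for a constant $c$ depending only on $\kappa$. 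Summing over the $N-1$ time steps $k \ge 1$ (for which the state has already accumulated at least one Gaussian increment), and then over the $\tau_1 i$ independent rounds collected for group $\G_j$ up to epoch $i$, the expected Gram matrix is at least $c^{-1}(N-1)\tau_1 i \sigma^2 I$. To turn this expectation bound into the required PSD lower bound, I would apply a matrix Azuma (or matrix Freedman) inequality to the martingale difference sequence of rank-one outer products, exploiting the fact that under $\CE_w \cap \CE_{\tilde{w}}$ the summands $z_{k,\G_j}^{(t)}z_{k,\G_j}^{(t)\top}$ have operator norm bounded by $z_b$ as in Eq.~\eqref{eqn:z_b}; the factor $1/80$ in the statement absorbs the constants arising from this concentration step and from separating the expected contribution of each summand from its deviation. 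A final union bound over $i \in [n_e]$ and $j \in [p]$ closes this event at level $\delta/8$.

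Combining the four bounds by a union bound gives $\Prob(\CE^c) \le 4\cdot \delta/8 = \delta/2$, which is the claimed inequality. The only delicate point is the calibration of constants in the $\CE_z$ step — in particular matching the sample-size threshold implicit in the matrix concentration inequality against the linear-in-$i$ lower bound stated in the lemma — but this is the sort of bookkeeping that is standard in the LQR identification literature following \cite{cohen2019learning,mania2019certainty}.
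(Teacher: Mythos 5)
Your decomposition of $\CE$ into the four events, the $\delta/8$ budget for each, and your treatments of $\CE_w$ and $\CE_{\tilde w}$ (Gaussian norm tails plus a union bound over the at most $NT$ indices) and of $\CE_{\Theta}$ (Lemma~\ref{lemma:est error of Theta_i hat} at level $\delta/(8p)$, then a union bound over $j\in[p]$) all coincide with the paper's proof. Your preliminary analysis of $\CE_z$ is also on track: the paper likewise writes $z_{k,\G_j}^{(t)}$ as a conditionally Gaussian vector given the natural filtration $\F_{k-1,j}^{(t)}$ and shows $\E[z_{k,\G_j}^{(t)}z_{k,\G_j}^{(t)\top}\mid\F_{k-1,j}^{(t)}]\succeq\frac{\sigma^2}{2}I$ for $k\ge1$ (which is exactly why only $N-1$ time steps per round count), just as you argue.

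Where you diverge, and where there is a genuine gap, is the concentration step that converts this conditional-expectation lower bound into the stated PSD bound. The paper does not use matrix Azuma/Freedman; it invokes \cite[Lemma~36]{cassel2020logarithmic}, a Gram-matrix lower bound tailored to conditionally Gaussian sequences, whose only sample-size requirement is $\tau_1\ge200(m+n)\log\frac{96n_ep}{\delta}$ --- a condition the $\tau_1$ of Eq.~\eqref{eqn:tau_1} satisfies by construction and which yields the constant $1/80$. Your route has two problems. First, the bound $\norm{z_{k,\G_j}^{(t)}z_{k,\G_j}^{(t)\top}}\le z_b$ holds only on $\CE_w\cap\CE_{\tilde w}$; truncating the summands to enforce it almost surely changes their conditional expectations and destroys the martingale-difference structure, so a direct application of matrix Freedman is not available and you would need a stopped-process argument. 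Second, and more seriously, any boundedness-based matrix concentration inequality forces the deviation term, of order $\sqrt{Mz_b^2\log(\cdot)}+z_b\log(\cdot)$ with $M=(N-1)\tau_1i$ summands, below $M\sigma^2/c$, i.e., it requires $M\gtrsim(z_b/\sigma^2)^2\log(\cdot)$. Since $z_b/\sigma^2$ is itself of order $(m+n)\zeta_0^2(1-\eta_0)^{-2}\kappa^2\log(NT/\delta)$, this is a quantitatively different and generally more demanding threshold on $\tau_1$ than the one fixed in Theorem~\ref{thm:regret of as alg}, and nothing in the stated choice of $\tau_1$ guarantees it. So the ``calibration of constants'' you defer is not routine bookkeeping: with the given $\tau_1$ your argument for $\CE_z$ does not close, and the conditionally Gaussian lemma is used in the paper precisely to avoid this quadratic dependence on $z_b/\sigma^2$.
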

Hence, we will provide upper bounds on $R_e^1$, $R_e^2$, $R_e^3$ and $R_e^4$, under the event $\CE$ defined in Eq.~\eqref{eqn:event E}. The following result characterizes the estimation error of $\hat{\Theta}_{\G_j}$, for all $j\in[p]$; {\color{black}the proof can be found in Appendix~\ref{app:regret bound proofs}. Lemma~\ref{lemma:est error of Theta_Gj(i)} shows that setting the system identification phase (i.e., $\tau_1p$) to be sufficiently long (i.e., Eq.~\eqref{eqn:tau_1}) ensures that the estimation error of $\hat{A},\hat{B}$ is small enough such that the results proved in Section~\ref{sec:ce LQR} can be applied to bound $R_e^3,R_e^4$ corresponding to the certainty equivalence subroutine in Algorithm~\ref{alg:online for AS}.}
\begin{lemma}
\label{lemma:est error of Theta_Gj(i)}
Consider any $0<\delta<1$, and suppose that the event $\CE$ holds. For any $j\in[p]$, it holds that $\norm{\hat{\Theta}_{\G_j}-\Theta_{\G_j}}^2\le\min\{\frac{\varepsilon_0^2}{p},\frac{\tau_0}{\sqrt{N}}\}$, where $\Theta_{\G_j}=\begin{bmatrix}A & B_{\G_j}\end{bmatrix}$.
\end{lemma}
We then have the following bounds on $R_e^1,R_e^2,R_e^3,R_e^4$; all the proofs are included in Appendix~\ref{app:regret bound proofs}. {\color{black}In particular, to bound $R_e^2$, we use $\bar{y}_b$ to normalize (i.e., upper bound) the cost of each episode in the control phase of Algorithm~\ref{alg:online for AS} so that the {\bf Exp3.S} subroutine and Lemma~\ref{thm:regret for exp3.s} can be applied.}
\begin{lemma}
\label{lemma:upper bound on R_e^1}
Under the event $\CE$, it holds that 
\begin{equation}
\label{eqn:upper bound on R_1}
R_e^1\le\max\{\sigma_Q,\sigma_R\}\frac{\tau_1p(2\eta_0^2+3)T\zeta_0^2}{(1-\eta_0)^2}\times\big(20\vartheta^2\eta_0^2\sigma^2m+10\sigma^2n\big)\log\frac{8TN}{\delta},
\end{equation}
where $\eta_0,\zeta_0$ are given by Assumption~\ref{ass:system parameters}.
\end{lemma}
\begin{lemma}
\label{lemma:upper bound on R_e^2}
Under the event $\CE$, it holds that 
\begin{align}
R_e^2\le\bar{y}_b 2\sqrt{e-1}\sqrt{|\Q|N(h(\CS_{\star})\log(|\Q|N)+e)}.\label{eqn:upper bound on R_2}
\end{align}
\end{lemma}
\begin{lemma}
\label{lemma:upper bound on R_e^3}
Under the event $\CE$, the following holds with probability at least $1-\delta/2$:
\begin{align}\nonumber
R_e^3&\le 64\sqrt{TN}\frac{\sigma^2(\sigma_Q+\sigma_R\kappa^2)\vartheta\zeta^3}{(1-\eta^2)(1-\eta)}\sqrt{5n\log\frac{8TN}{\delta}}\\
&\quad+32(\sigma_Q+\sigma_R\kappa^2)\frac{\zeta^2}{1-\eta^2}\sigma^2\sqrt{TN(\log\frac{16TN}{\delta})^3}.\label{eqn:upper bound on R_3}
\end{align}
\end{lemma}
\begin{lemma}
\label{lemma:upper bound on R_e^4}
Under the event $\CE$, it holds that 
\begin{align}\nonumber
&R_e^4\le \frac{4\max_{\CS\subseteq\G,|\CS|=H}\{n,m_{\CS}\}T\zeta^2\tau_0\sqrt{N}}{(1-\eta^2)}\sigma(\sigma_R+\Gamma^3)\\
&\times\big(3\tilde{\Gamma}^6(20\tilde{\Gamma}\sigma_R)^{\ell-1}32\ell^{\frac{5}{2}}\tilde{\beta}^{2(\ell-1)}(1+\nu^{-1})\max\{\sigma_Q,\sigma_R\}\big)^2.\label{eqn:upper bound on R_4}
\end{align}
\end{lemma}
Since $\Prob(\CE)\ge1-\delta/2$ from Lemma~\ref{lemma:event E high prob}, we can further apply a union bound and obtain an upper bound on $R_{\A_e}$ that holds with probability at least $1-\delta$. Specifically, one can show using \eqref{eqn:upper bound on R_1}-\eqref{eqn:upper bound on R_4} that $R_e^1=\tilde{O}(n(m+n)^2p^2T\sqrt{N})$, $R_e^2=\tilde{O}(nT\sqrt{|Q|Nh(\CS_{\star})})$, $R_e^3=\tilde{O}(\sqrt{nTN})$, and $R_e^4=\tilde{O}(n(n+m)^2T\sqrt{N})$, combining which implies \eqref{eqn:R_Ae} and completes the proof of Theorem~\ref{thm:regret of as alg}.\hfill\qed

\subsection{Discussions about the Results in Theorem~\ref{thm:regret of as alg}}\label{sec:discussions}
{\bf Length of the system identification phase:} Recall that Eq.~\eqref{eqn:tau_1} specifies the minimum length of the system identification phase in Algorithm~\ref{alg:online for AS} (i.e., $\tau_1p$). To gain more insights on how $\tau_0,\tau_1$ depend on other problem parameters, letting the regularization in the least square approach be $\lambda\ge z_b$, and supposing $TN\ge n$ and $TN\ge p$, one can show
\begin{align}
\frac{\tau_0}{\varepsilon_0^2}=O(1)\frac{\zeta^4\eta_0^4(\vartheta^2+1)^2}{(1-\eta)^4(T-1)}n(m+n)\ell^5\tilde{\Gamma}^6\tilde{\beta}^{4\ell-4}\max\{\sigma_R^2,\sigma_Q^2\}(1+\nu^{-1})^2\log\frac{NT}{\delta},\label{eqn:tau_1 rewrite}
\end{align}
where $O(1)$ is a universal constant. Since $\log(NT/\delta)=o(T)$, we see from Eq.~\eqref{eqn:tau_1 rewrite} that a larger value of $T$, i.e., the number of time steps in each episode $k\in[N]$ implies a smaller lower bound on $\tau_1$. Thus, the regret bound in Theorem~\ref{thm:regret of as alg} holds for $N>\tau_1 p$, which can be shown to be equivalent to $N$ being greater than a polynomial in the problem parameters. {\color{black}If $N\ge\tau_0^2/\varepsilon_0^4$, $\tau_1=\lceil\max\{\sqrt{N},\tau_0/\varepsilon_0^2\}\rceil$ reduces to $\tau_1=\lceil\sqrt{N}\rceil$.}

{\color{black} {\bf Knowledge about the unknown system:} One can check that the choices of $\tau_1,\bar{y}_b$ require knowledge of $\sigma_Q,\sigma_R,\zeta_{\CS},\eta_{\CS},\sigma^2,\zeta_0,\eta_0,\vartheta,\ell,\nu,\Gamma_{\CS}$ (for all $\CS\subseteq\G$ with $|\CS|=H$), where $\sigma_Q,\sigma_R,\sigma$ are given by our assumptions on the cost matrices and noise covariance, and $\zeta_0,\eta_0$ (resp., $\ell,\nu$) are given by Assumption~\ref{ass:system parameters} (resp., Assumption~\ref{ass:controllability}). The other parameters may also be computed (or bounded) given some knowledge of the unknown system. First, as shown in \cite{zhang2021regret}, for any $\CS\subseteq\G$ with $|\CS|=H$, $\eta_{\CS}$ and $\zeta_{\CS}$ can be expressed as $\eta_{\CS}=\sqrt{1-1/\max_{t\in[T],k\in[N]}\norm{P_{t,\CS}^k}}$ and $\zeta_{\CS}=\sqrt{\max_{t\in[T],k\in[N]}\norm{P_{t,\CS}^k}}$. For any $t\in[T]$ and any $k\in[N]$, Eq.~\eqref{eqn:control gain} yields $\norm{K_{t-1,\CS}^k}\le\vartheta^2\norm{P_{t,\CS}^k}$, and one can further upper bound $\norm{P_{t,\CS}^k}$ given any stabilizing controller $K_{\CS}$ (corresponding to the set of actuators $\CS$) (see Lemma~\ref{lemma:bound on P} in Appendix~\ref{app:tech lemmas}). Thus, by the definition of $\Gamma_{\CS}$ in Eq.~\eqref{eqn:def of Gamma S_t}, to compute (or bound) $\eta_{\CS},\zeta_{\CS},\Gamma_{\CS}$, we need to know (or upper bound) $\max_{t\in[T],k\in[N]}\norm{P_{t,\CS}^k}$ and know the upper bound $\vartheta$ on $\norm{A}$ and $\norm{B}$.} 

{\color{black}{\bf Output of Algorithm~\ref{alg:online for AS}:} Since Algorithm~\ref{alg:online for AS} uses the {\bf Exp3.S} subroutine to select the sets of actuators in the control phase of Algorithm~\ref{alg:online for AS}, as we argued in Section~\ref{sec:bandit problem}, {\bf Exp3.S} produces a (random) sequence of subsets of selected actuators $\CS_{N_{p+1}},\dots,\CS_N$ that can be different across the episodes, which ensures exploring new sets of actuators that have not been chosen before and exploiting the set of actuators that yield the lowest cost up until the current episode. As we showed in Section~\ref{sec:proof of thm 1}, such a sequence of subsets of selected actuators yields a $\sqrt{N}$-regret bound on $R_{e}^2$.}

{\bf Factors in the regret  bound:} First, the regret bound in Theorem~\ref{thm:regret of as alg} contains the $\sqrt{T^2N}$ factor. {\color{black}Although $\sqrt{T^2N}$ is not sublinear in the total number of time steps in the episodic setting of Problem~\eqref{eqn:LQR obj} (i.e., $TN$), it matches with the optimal regret bound (in terms of the scaling of $T,N$) that can be achieved by any model-based algorithms for general episodic reinforcement learning problems \cite{azar2017minimax}. If $T=o(\sqrt{N})$ (i.e., the number of time steps in each episode is small relative to the total number of episodes), the factor $\sqrt{T^2N}$ will become sublinear in $N$.} Second, the regret bound contains an exponential factor in $\ell$. As we argued before, $\ell<<n$ if $\text{rank}(B_{\CS})$ is large. In particular, $\ell=1$ if $\text{rank}(B_{\CS})=n$ (for any $\CS\subseteq\G$ with $|\CS|=H$). Third, since $R_{\A_e}$ defined in Eq.~\eqref{eqn:regret of A_e} is a dynamic regret as we argued in Remark~\ref{remark:dynamic regret}, the regret bound in \eqref{eqn:R_Ae} contains the factor $\sqrt{h(\CS_{\star})}$, where $h(\CS_{\star})$ measures the number of switchings in the benchmark $\CS_{\star}=(\CS_{\star}^1,\dots,\CS_{\star}^N)$. Such a factor of $h(\CS_{\star})$ is typical in the bounds on the dynamic regret of online algorithms \cite{auer2002nonstochastic,zhang2018dynamic,zinkevich2003online}. If the static regret described in Remark~\ref{remark:dynamic regret} is considered, then $h(\CS_{\star})=1$. Finally, the regret bound contains the factor $\sqrt{|\Q|}$ with $|\Q|={|\G|\choose H}$, which will not be a bottleneck if either of $|\G|$ or $H$ is small or bounded by a constant. In fact, a factor of $|\Q|={|\G|\choose H}$ is unavoidable in the regret of any online algorithm defined in Eq.~\eqref{eqn:regret of A_e} for Problem~\eqref{eqn:LQR obj}, since Problem~\eqref{eqn:LQR obj} is an NP-hard combinatorial optimization problem \cite{streeter2008online,ye2020complexity}. In Section~\ref{sec:large scale instances}, we will show how to extend our algorithm design and regret analysis to handle large-scale instances of Problem~\eqref{eqn:LQR obj}.

\section{Algorithm Design for non-episodic Setting and Regret Analysis}\label{sec:alg for non-episodic setting}
In this section, we consider the non-episodic setting of Problem~\eqref{eqn:LQR obj} described in Section~\ref{sec:online AS non-episodic}. For any $\CS_t\subseteq\G$ and any $t\in\{0,\dots,T-1\}$, we see from Eqs.~\eqref{eqn:cost per time step}-\eqref{eqn:final time step cost} that the cost $c_t(\CS_{0:t},u_{\CS_{0:t}})$ of time step $t\in\{0,\dots,T-1\}$ depends on $\CS_0,\dots,\CS_{t-1}$ via the state $x_t$. 
Hence, Remark~\ref{remark:condition for Exp3} implies that the {\bf Exp3.S} algorithm and the result in  Lemma~\ref{thm:regret for exp3.s} cannot be directly applied to solve the non-episodic setting of Problem~\eqref{eqn:LQR obj} in the same way as Algorithm~\ref{alg:online for AS}, which creates the major challenge when we move from the episodic setting to the non-episodic setting.

Nonetheless, given a non-episodic instance of Problem~\eqref{eqn:LQR obj} described in Section~\ref{sec:online AS non-episodic}, one may construct an episodic instance of Problem~\eqref{eqn:LQR obj} as follows. First, we group the time steps $0,\dots,T-1$ in the non-episodic instance of Problem~\eqref{eqn:LQR obj} into $N^{\prime}=\lfloor T/T^{\prime}\rfloor$ consecutive episodes with length $T^{\prime}\in\Z_{\ge1}$, where the $k$th episode starts at $t=(k-1)T^{\prime}$ and ends at $t=kT^{\prime}-1$. In each episode $k\in[N^{\prime}]$, we fix the set of selected actuators, i.e., we let $\CS_{(k-1)T^{\prime}}=\cdots=\CS_{kT^{\prime}-1}=\CS^k$, where $\CS^k\subseteq\G$ with $|\CS^k|=H$.\footnote{For simplicity, we assume that $T^{\prime}N^{\prime}=T$; otherwise, we can modify the number of time steps in the last episode.}  We then follow the notations introduced for the episodic setting in the previous sections. Specifically, we may write the state, control and disturbance at any time step $t\in\{0,\dots,T^{\prime}-1\}$ in any episode $k\in[N^{\prime}]$ as $x_t^k$, $u_{t,\CS^k}^k$ and $w_t^k$, respectively, e.g., $x_{(k-1)T^{\prime}+t}=x_t^k$ and $x_0^k=x_{T^{\prime}}^{k-1}$ with $x_0^1=0$. Note that the initial state $x_0^k$ of any episode $k\in[N^{\prime}]$ is not reset to $0$ in the episodic instance of Problem~\eqref{eqn:LQR obj} constructed above, and that $x_0^k=x_{T^{\prime}}^{k-1}$ depends on $\CS^{1},\dots,\CS^{k-1}$. For any episode $k\in[N^{\prime}]$, the cost matrices are set to be $Q^k=Q$, $R^k=R$, $Q_f^k=0$ if $k<N^{\prime}$ and $Q_f^k=Q_f$ if $k=N^{\prime}$. Similarly to Eq.~\eqref{eqn:episode t cost}, we denote the cost of episode $k$ as $J_{k}(\CS^{k},u_{\CS^{k}}^{k})$, where for notational simplicity we hide the dependency of $J_{k}(\CS^{k},u_{\CS^{k}}^{k})$ on the sets of actuators $\CS^1,\dots,\CS^{k-1}$ selected before episode $k$. One can now apply Algorithm~\ref{alg:online for AS} to the episodic instance constructed above; the detailed steps are summarized in Algorithm~\ref{alg:online for AS non-episodic}. {\color{black}Similarly, the {\bf Exp3.S} subroutine in Algorithm~\ref{alg:online for AS non-episodic} is applied to the MAB instance, where the total number of episodes in {\bf Exp3.S} is $N_s=N-N_{p+1}^{\prime}+1$, the set of all possible actions is $\Q=\{\CS\subseteq\G:|\CS|=H\}$, and the cost associated with each possible action $\CS\in\Q$ in episode $k$ is $y_{\CS}^k=\frac{1}{T^{\prime}}J_k(\CS^k,u_{\CS}^k)$.}

\begin{algorithm2e}[h]
\SetNoFillComment
\caption{Non-episodic Setting}\label{alg:online for AS non-episodic}
\KwIn{Parameters $\tau_1^{\prime},\lambda,N^{\prime},T^{\prime},\bar{y}_b^{\prime}$, and $K_{\G_j}$ for all $j\in[p]$ from Assumption~\ref{ass:system parameters}.}
{\color{black} Initialize $N_1^{\prime}=1$.}\\
{\color{black}\For{$j=1$ to $p$}{
    Set $N^{\prime}_{j+1}\gets N^{\prime}_{j}+\tau_1^{\prime}$.
}}
{\color{black} Set $T\gets T^{\prime},N\gets N^{\prime},N_j\gets N_j^{\prime}\ \forall j\in[p+1],\bar{y}_b\gets\bar{y}_b^{\prime}$; follow lines~4-16 in Algorithm~\ref{alg:online for AS}, where the cost $y_{\CS_k}^k$ in line~16 is changed to be $y_{\CS^k}^k=\frac{1}{T^{\prime}}J_k(\CS^k,u_{\CS^k}^k)$.}\\
{\color{black}\KwOut{$\CS_t,u_{t,\CS_t},\forall t\in\{0,\dots,T-1\}$.}}
\end{algorithm2e}

The intuition behind the above construction is that if we fix a set of actuators $\CS^k$ for $T^{\prime}$ time steps in an episode $k\in[N^{\prime}]$ (and design the corresponding control $u_{\CS^k}^k$ based on the certainty equivalence approach), then one can use Lemma~\ref{lemma:bound on Psi_hat} to show that the influence of the initial condition $x_0^k$ on the cost $c_t(\CS_{0:t},u_{\CS_{0:t}})$ (defined in Eqs.~\eqref{eqn:cost per time step}-\eqref{eqn:final time step cost}) at any time step $t\in\{(k-1)T^{\prime},\dots,kT^{\prime}-1\}$ in episode $k$ decays exponentially as $t$ increases. This in turn implies that $c_t(\CS_{0:t},u_{\CS_{0:t}})$ tends to be independent of $\CS^{1},\dots,\CS^{k-1}$ selected before episode $k\in[N^{\prime}]$, and we can then adopt the analysis developed in Section~\ref{sec:regret analysis} for the episodic setting.

In the sequel, we interchangeably write $\CS_{0},\dots,\CS_{T-1}$ and $\CS^1,\dots,\CS^{N^{\prime}}$, $u_{0,\CS_0},\dots,u_{T-1,\CS_{T-1}}$ and $u_{\CS^1}^1,\dots,u_{\CS^{N^{\prime}}}^{N^{\prime}}$, and $x_0,\dots,x_{T-1}$ and $x^{0}_1,\dots,x_{T^{\prime}-1}^0,\dots,x^{N^{\prime}}_{0},\dots,x^{N^{\prime}}_{T^{\prime}-1}$. Similarly to Theorem~\ref{thm:regret of as alg}, we prove the following result (Theorem~\ref{thm:regret of as alg non-episodic}) for the regret $R_{\A_c}$ of Algorithm~\ref{alg:online for AS non-episodic} defined in Eq.~\eqref{eqn:regret of A_c}, where we use the notations introduced in \eqref{eqn:def of Gamma S_t}-\eqref{eqn:max sigma_1 Q and R} and \eqref{eqn:parameters} (with $N=N^{\prime}$ and $T=T^{\prime}$). Note that since we let $\CS_{(k-1)T^{\prime}}=\cdots=\CS_{kT^{\prime}-1}=\CS^k$ in any episode $k\in[N^{\prime}]$, we consider the benchmark $\CS^{\star}=(\CS_0^{\star},\dots,\CS_{T-1}^{\star})$ in Eq.~\eqref{eqn:regret of A_c} with $\CS^{\star}_{(k-1)T^{\prime}}=\cdots=\CS^{\star}_{kT^{\prime}-1}=\CS_{\star}^k$ for all $k\in[N^{\prime}]$, where $\CS$ is any $\CS_{\star}^k\subseteq\G$ with $|\CS_{\star}^k|= H$. The proof of Theorem~\ref{thm:regret of as alg non-episodic} follows by quantifying the dependency of $c_t(\CS_{0:t},u_{\CS_{0:t}})$ on $x_0^k$ as we described above, and carefully adapting the techniques from the proof of Theorem~\ref{thm:regret of as alg}.

\begin{theorem}
\label{thm:regret of as alg non-episodic}
Suppose that Assumptions~\ref{ass:cost matrices}-\ref{ass:system parameters} hold. Let $T^{\prime}=\big\lceil\big(4(e-1)(h(\CS_{\star})\ln(|\Q|T)+e)|\Q|\big)^{-1/3}T^{1/3}\big\rceil$, and $N^{\prime}=\lceil T/T^{\prime}\rceil$, where $\Q=\{\CS\subseteq\G:|\CS|=H\}$, and $h(\CS_{\star})$ is defined in Eq.~\eqref{eqn:number of switchings}. Consider any $\delta\in\R_{>0}$ with $0<\delta<1$. Denote 
\begin{equation*}
\label{eqn:tau_0 non-episodic}
\tau_0^{\prime}=\frac{160np\Big(\frac{\lambda\vartheta^2}{\sigma^2}+2(n+m)\log\big(\frac{8n}{\delta}(p+\frac{Tz_b^{\prime}}{\lambda})\big)\Big)}{T^{\prime}-1},
\end{equation*}
where
\begin{equation*}
\label{eqn:z_b non-episodic}
z_b^{\prime}=\frac{180\zeta_0^4(1+\eta_0)^2\sigma^2}{(1-\eta_0)^2}\big(2(\vartheta^2+1)\eta_0^2m+n\big)\log\frac{8T}{\delta}.
\end{equation*}
In Algorithm~\ref{alg:online for AS non-episodic}, let $\tau_1^{\prime}\ge\Big\lceil\max\Big\{\sqrt{N^{\prime}},\frac{\tau_0^{\prime}}{\varepsilon_0^2}\Big\}\Big\rceil$ and
\begin{align}\nonumber
\bar{y}_b^{\prime}\ge(2\sigma_Q+\kappa^2\sigma_R)\frac{36\eta^4\sigma^2}{(1-\eta)^2}(20\vartheta^2\eta_0^2m+10n)\log\frac{8T}{\delta}.  
\end{align}
Then, for any $T>\tau_1^{\prime}pT^{\prime}$ with $T^{\prime}>T_m=\frac{2}{1-\eta}(\frac{1}{3}\log T+\log\zeta)>0$, the following holds with probability at least $1-\delta$:
\begin{align}
&R_{\A_c}= \tilde{O}(n(m+n)^2p^2\sqrt{|\Q|h(\CS_{\star})}T^{2/3}),\label{eqn:R_Ac}
\end{align}
where $R_{\A_c}$ is defined in Eq.~\eqref{eqn:regret of A_c}, $\Q=\{\CS\subseteq\G:|\CS|=H\}$, and $\tilde{O}(\cdot)$ hides polynomial factors in $\log(|\Q|N^{\prime}),\log((m+n)T/\delta),\sigma_R,\sigma_Q,\sigma,\kappa,\zeta,\tilde{\Gamma},\ell,\tilde{\beta},(1-\eta)^{-1},\nu^{-1}$, where $\tilde{\beta}=1+\norm{A}$,  $\nu\in\R_{>0},\ell\in[n-1]$ are given in Assumption~\ref{ass:controllability}, and $\sigma_R,\sigma_Q$ are defined in \eqref{eqn:max sigma_1 Q and R}.
\end{theorem}

\subsection{Proof of Theorem~\ref{thm:regret of as alg non-episodic}}\label{sec:proof of thm2}
The proof of Theorem~\ref{thm:regret of as alg non-episodic} is similar to that of Theorem~\ref{thm:regret of as alg}. In particular, we leverage a probabilistic event $\CE^{\prime}$ defined as \eqref{eqn:event E} (with $N=N^{\prime}$ and $T=T^{\prime}$). Recalling the definition of $R_{\A_c}$ given in Eq.~\eqref{eqn:regret of A_c}, one can decompose $R_{\A_c}=R_c^1+R_c^2+R_c^3+R_c^4$ with
\begin{align*}
R_c^1 &= \E_{\A_c}\Big[\sum_{k\in\tilde{\K}^{\prime}}J_k(\CS^{k},u_{\CS^{k}}^{k})\Big],\\
R_c^2 &= \E_{\A_c}\Big[\sum_{k\in\K^{\prime}}J_k(\CS^{k},u_{\CS^{k}}^{k})\Big]-\sum_{k\in\K^{\prime}}J_k(\CS^{k}_{\star},u_{\CS^{k}_{\star}}^{k}),\\
R_c^3 &= \sum_{k\in\K^{\prime}}\big(J_k(\CS^{k}_{\star},u_{\CS^{k}_{\star}}^{k})-\hat{J}_k(\CS^{k}_{\star})\big),\\
R_c^4 &= \sum_{k\in\K^{\prime}}\big(\hat{J}_k(\CS^{k}_{\star})\big)-J(\CS_{\star}),
\end{align*}
where $\tilde{\K}^{\prime}$ and $\K^{\prime}$ are defined in Eqs.~\eqref{eqn:T_tilde} and \eqref{eqn:T} (with $N=N^{\prime}$ and $T=T^{\prime}$), respectively, and $\hat{J}_k(\CS^k_{\star})$ is given by Eq.~\eqref{eqn:exp for J_t hat}. Similarly to Lemma~\ref{lemma:event E high prob}, we can show that $\Prob(\CE^{\prime})\ge1-\delta/2$ for any $0<\delta<1$. Moreover, similarly to Lemma~\ref{lemma:est error of Theta_Gj(i)}, we have the following result; the proof is included in Appendix~\ref{app:regret bound proofs non-episodic}.
\begin{lemma}
\label{lemma:est error of Theta_Gj(i) non-episodic}
Consider any $0<\delta<1$, and suppose that the event $\CE^{\prime}$ holds. For any $j\in[p]$, it holds that $\norm{\hat{\Theta}_{\G_j}-\Theta_{\G_j}}^2\le\min\{\frac{\varepsilon_0^2}{p},\frac{\tau_0^{\prime}}{\sqrt{N^{\prime}}}\}$, where $\Theta_{\G_j}\triangleq\begin{bmatrix}A & B_{\G_j}\end{bmatrix}$.
\end{lemma}

For notational simplicity in the sequel, let us denote 
\begin{equation}
\label{eqn:x_b}
x_b= \frac{6\zeta^2}{1-\eta}\sqrt{20\vartheta^2\eta_0^2\sigma^2m+10\sigma^2n}\sqrt{\log\frac{8T}{\delta}}.
\end{equation}
The following results show that the cost $c_t(\CS_{0:t},u_{\CS_{0:t}})$ at any time step $t\in\{(k-1)T^{\prime},\dots,kT^{\prime}-1\}$ in episode $k\in\K^{\prime}$ is almost independent of $\CS^{1},\dots,\CS^{k-1}$ selected before episode $k$, as we argued above. The proofs are included in Appendix~\ref{app:regret bound proofs non-episodic}.
\begin{lemma}
\label{lemma:norm bound on state}
Consider any $\CS^{1},\dots,\CS^{N^{\prime}}$ that satisfy $\CS^k=\G_j$ for all $k\in\{N^{\prime}_j,\dots,N^{\prime}_{j+1}-1\}$ and all $j\in[p]$, where $N^{\prime}_j$ is given as line~3 of Algorithm~\ref{alg:online for AS}, and $\CS^k\subseteq\G$ with $|\CS^k|=H$ for all $k\in\K^{\prime}$. Suppose the sequence of actuators $\CS^1,\dots,\CS^{N^{\prime}}$ is selected in the episodic instance of Problem~\eqref{eqn:LQR obj} that we constructed above, and the corresponding control $u_{t,\CS^k}^k$ is given by lines~7 and 13 in Algorithm~\ref{alg:online for AS} for any $k\in[N^{\prime}]$ and any $t\in\{0,\dots,T^{\prime}-1\}$. Then, $\norm{x_t^k}\le x_b$ for all $k\in[N^{\prime}]$ and all $t\in\{0,\dots,T^{\prime}\}$.
\end{lemma}

\begin{lemma}
\label{lemma:almost episodic}
Consider any sequence of actuators $\CS^1,\dots,\CS^{N^{\prime}}$ and the control $u_{t,\CS^k}^k$ described in Lemma~\ref{lemma:norm bound on state} for the episodic instance of Problem~\eqref{eqn:LQR obj} that we constructed. For any $k_0\in\K^{\prime}$, consider another sequence of actuators $\tilde{\CS}^1,\dots,\tilde{\CS}^{k_0}$, where $\tilde{\CS}^{k}=\CS^{k}$ for all $k\in\tilde{\K}^{\prime}\cup\{k_0\}$, and $\tilde{\CS}^{k}\subseteq\G$ with $|\tilde{\CS}^{k}|=H$ for all $k^{\prime}\in[k_0]\setminus(\tilde{\K}^{\prime}\cup\{k_0\})$. Suppose the control $u_{t,\tilde{\CS}^k}^k$ corresponding to $\tilde{\CS}^1,\dots,\tilde{\CS}^{k_0}$ is give by lines~7 and 13 in Algorithm~\ref{alg:online for AS} for any $k\in[k_0]$ and any $t\in\{0,\dots,T^{\prime}-1\}$. Then, under the event $\CE^{\prime}$,
\begin{equation*}
|c_t(\CS_{0:t},u_{\CS_{0:t}})-c_t(\tilde{\CS}_{0:t},u_{\tilde{\CS}_{0:t}})|\le8(\sigma_R+\kappa^2\sigma_Q)x_b^2T^{-1/3},
\end{equation*}
for all $t\in\{(k_0-1)T^{\prime}+T_m,\dots,k_0T^{\prime}-1\}$.
\end{lemma}

We are now in place to upper bound $R_1^c$, $R_2^c$, $R_3^c$ and $R_4^c$; the proofs are similar to those for the lemmas in Section~\ref{sec:proof of thm 1} and are included in Appendix~\ref{app:regret bound proofs non-episodic}.
\begin{lemma}
\label{lemma:upper bound on R_1^c}
Under the event $\CE^{\prime}$, it holds that 
\begin{equation*}
R_c^1\le\max\{\sigma_Q,\sigma_R\}\tau_1^{\prime}p(2\eta_0^2+3)T^{\prime}x_b^2.
\end{equation*}
\end{lemma}

\begin{lemma}
\label{lemma:upper bound on R_c^2}
Under the event $\CE^{\prime}$, it holds that 
\begin{equation*}
R_c^2\le8(\sigma_Q+\kappa^2\sigma_R)x_b^2T^{2/3}+(2\sigma_Q+\kappa^2\sigma_R)x_b^2(T_m+1)\big(4(e-1)(h(\CS_{\star})\ln(|\Q|T)+e)|\Q|\big)^{1/3}T^{2/3}.
\end{equation*}
\end{lemma}

\begin{lemma}
\label{lemma:upper bound on R_c^3}
Under the event $\CE^{\prime}$, the following holds with probability at least $1-\delta/2$:
\begin{align}
&R_c^3\le32\sqrt{T}\frac{\sigma(\sigma_Q+\sigma_R\kappa^2)\vartheta\zeta}{1-\eta^2}x_b+32(\sigma_Q+\sigma_R\kappa^2)\frac{\zeta^2}{1-\eta^2}\sigma^2\sqrt{T(\log\frac{16T}{\delta})^3}.\label{eqn:upper bound on R_c^3}
\end{align}
\end{lemma}

\begin{lemma}
\label{lemma:upper bound on R_c^4}
Under the event $\CE^{\prime}$, it holds that
\begin{align}\nonumber
&R_c^4\le (\sigma_Q+\sigma_R\kappa^2)\frac{4\zeta^2x_b^2}{1-\eta^2}+\frac{4\max_{\CS\subseteq\G,|\CS|=H}\{n,m_{\CS}\}T^{\prime}\zeta^2\tau_0^{\prime}\sqrt{N^{\prime}}}{(1-\eta^2)}\sigma(\sigma_R+\Gamma^3)\\\nonumber
&\qquad\qquad\times\big(3\tilde{\Gamma}^6(20\tilde{\Gamma}\sigma_R)^{\ell-1}32\ell^{\frac{5}{2}}\tilde{\beta}^{2(\ell-1)}(1+\nu^{-1})\max\{\sigma_Q,\sigma_R\}\big)^2.
\end{align}
\end{lemma}

Finally, similarly to the proof of Theorem~\ref{thm:regret of as alg}, we complete the proof of Theorem~\ref{thm:regret of as alg non-episodic} by combining Lemmas~\ref{lemma:upper bound on R_1^c}-\ref{lemma:upper bound on R_c^4} and noting the way we set $N^{\prime},T^{\prime}$ in Theorem~\ref{thm:regret of as alg non-episodic}.

\section{Handling Large-Scale Problem Instance}\label{sec:large scale instances}
We now extend our algorithm design and regret analysis to efficiently handle large-scale instances of Problem~\eqref{eqn:LQR obj}.
We first consider the episodic setting of Problem~\eqref{eqn:LQR obj}. Leveraging the ideas from \cite{streeter2008online}, we propose to use $H$ statistically independent copies of the {\bf Exp3.S} subroutine in parallel, denoted as $M_1,\dots,M_H$, to choose the $H$ actuators in each episode. Detailed steps are summarized in Algorithm~\ref{alg:online for AS large scale}, where the system identification phase is the same as Algorithm~\ref{alg:online for AS}. {\color{black} We now explain how the {\bf Exp3.S} subroutines in Algorithm~\ref{alg:online for AS large scale} are used to select the actuators. Consider any $j\in[H]$ and any $k\in\{N_{p+1},\dots,N\}$. Let $s_j^k\in\G$ be the actuator selected by the {\bf Exp3.S} subroutine $M_j$ and let $\CS_{j}^{\prime k}=\{s_1^k,\dots,s_j^k\}$ with $\CS_0^{\prime k}=\emptyset$. Thus, $\CS^{\prime k}_H$ is the set of actuators selected by the $H$ {\bf Exp3.S} subroutines. The {\bf Exp3.S} subroutine $M_j$ is applied to the MAB instance, where the total number of episodes is $N_s=N-N_{p+1}+1$, the set of all possible actions is $\Q=\G$, and the cost associated with each possible action $s\in\Q$ in episode $k$ is 
\begin{equation}\label{eqn:cost of arms in large-scale}
y_{j,s}^k=J_k(\CS^{\prime k}_{j-1}\cup\{s\},u^k_{\CS^{\prime k}_{j-1}\cup\{s\}})-J_k(\CS^{\prime k}_{j-1},u_{\CS^{\prime k}_{j-1}}^k),
\end{equation}
where $J_k(\cdot,\cdot)$ is defined in Eq.~\eqref{eqn:episode t cost}. Note that in Algorithm~\ref{alg:online for AS large scale}, the actual cost that $M_j$ receives by selecting $s_j^k$ is given by
\begin{align}
\hat{y}^k_{j,s_j^k}\triangleq J_k(\CS^k,u^k_{\CS^k})\mathds{1}\{s=s_j^k,j=i,b_k=1\}\label{eqn:y_hat},
\end{align}
which can be different from the true cost $y_{j,s_j^k}^k$, where $b_k$ is a Bernoulli random variable with parameter $\rho$, and $i$ and $s$ are sampled from $[H]$ and $\G$ uniformly at random (u.a.r), respectively. Moreover, the actual set of actuators selected by Algorithm~\ref{alg:online for AS large scale} in episode $k$ (i.e., $\CS^k$) can also be different from $\CS^{\prime k}_H$ selected by the {\bf Exp3.S} subroutines, depending on $b_k$.}

\begin{algorithm2e}[h]
\SetNoFillComment
\caption{Large-Scale Problem Instance}\label{alg:online for AS large scale}
\KwIn{Parameters $\tau_1,\lambda,N,T,\bar{y}_b,\rho$, and $K_{\G_j}$ for all $j\in[p]$ from Assumption~\ref{ass:system parameters}.}
Follow lines~1-9 in Algorithm~\ref{alg:online for AS} to obtain $\hat{A},\hat{B}$.\\
{\color{black} Initialize $H$ independent {\bf Exp3.S} subroutines $M_1,\dots,M_H$ with $N_s=N-N_{p+1}+1$, $\Q=\G$, and $\alpha_1,\alpha_2$ according to Lemma~\ref{thm:regret for exp3.s}.}\\
{\color{black}\For{$j=1$ to $H$}{
Enter the $1$st iteration of the for loop in lines~2-6 in $M_j$; select $s_j^{N_{p+1}}\in\Q$ according to the probabilities $q_{j,1}^{N_{p+1}},\dots,q_{j,|Q|}^{N_{p+1}}$ computed by line~3 in $M_j$; construct $\CS^{\prime N_{p+1}}_H=\cup_{j\in[H]}s_j^{N_{p+1}}$.
}}
\For{$k=N_{p+1}$ to $N$}{
Sample $b_k\overset{\text{i.i.d.}}{\sim}\text{Bernoulli}(\rho)$.\\

    Sample $i\in[H]$ u.a.r. and $s\in\G$ u.a.r.\\
    If $b_k=1$, select $\CS^k=\CS^{\prime k}_{i-1}\cup\{s\}$ for episode $k$; if $b_k=0$, select $\CS^k=\CS^{\prime k}_H$ for episode $k$. \\
        \For{$t=0$ to $T-1$}{
            Obtain $\hat{K}_{t,\CS^k}^{k}$ using $\hat{A},\hat{B}_{\CS^k}$ via Eq.~\eqref{eqn:control gain S_t est}.\\
            Play $u_{t,\CS^k}^{k}=\hat{K}_{t,\CS^k}^{k}x_t^{k}$.
        }
        {\color{black}\For{$j=1$ to $H$}{
        Receive the cost $\hat{y}_{j,s_j^k}^k$; follow lines~4-6 in $M_j$ with $y_a=-\bar{y}_b$ and $y_b=\bar{y}_b$; finish the $(k-N_{p+1}+1)$th iteration of the for loop in lines~2-6 in $M_j$.\\
        Enter the $(k-N_{p+1}+2)$th iteration of the for loop in lines~2-6 in $M_j$; select $s_j^{k+1}$ according to the probabilities  $q_{j,1}^{k+1},\dots,q_{j,|Q|}^{k+1}$; construct $\CS^{\prime k+1}_H=\cup_{j\in[H]}s_j^{k+1}$.
        }}

}
{\color{black}\KwOut{$\CS^k,u_{\CS^k}^k=(u_{0,\CS^k}^k,\dots,u_{T-1,\CS^k}^k),\forall k\in[N]$.}}
\end{algorithm2e}

{\color{black} As we argued in Sections~\ref{sec:algorithm design for episodic setting} and~\ref{sec:regret analysis}, Problem~\eqref{eqn:LQR obj} (i.e., Problem~\eqref{eqn:LQR obj 2nd}) is NP-hard, and using a single {\bf Exp3.S} subroutine in Algorithm~\ref{alg:online for AS} leads to the exponential factor $|\Q|={|\G|\choose H}$ in $|\G|$ in both the running time and the regret bound of Algorithm~\ref{alg:online for AS}. To overcome the computational bottleneck, Algorithm~\ref{alg:online for AS large scale} leverages $H$ {\bf Exp3.S} subroutines each of which selects a single actuator in each episode as we described above.} One can check that the running time of each episode in Algorithm~\ref{alg:online for AS large scale} is $O(H((n+m)^3T+|\G|T))=O(H(n+m)^3T)$. 

{\color{black} To overcome the $|\Q|={|\G|\choose H}$ factor in the regret analysis, we will leverage the notion of $c$-regret introduced for online algorithms for combinatorial optimization problems (see, e.g., \cite{streeter2008online,salazar2021differentially}). The $c$-regret is parameterized by $c\in(0,1]$ whose value will be specified shortly. For any $k\in[N]$, denote
\begin{equation}
\label{eqn:g_k normalized}
g_k(\CS)\triangleq J_k(\emptyset)-J_k(\CS),
\end{equation}
for all $\CS\subseteq\G$, where $J_k(\CS)$ is given by Eq.~\eqref{eqn:opt LQR cost S_t}. 
Now, we augment the elements in the ground set $\G$ (of all the candidate actuators) and define 
\begin{equation}\label{eqn:augmented G}
\bar{\G}=\{(s^{N_{p+1}},\dots,s^K):s^k\in\G,k\in\K\}
\end{equation}
with $\K$ given by Eq.~\eqref{eqn:T}. For any $k\in\K$, let $\bar{\CS}^k=\{\bar{s}^k\in\bar{s}:\bar{s}\in\bar{\CS}\}$ with $\bar{s}^k$ denoting the $k$th element of the tuple $\bar{s}\in\bar{\CS}$. Next, we define $\bar{g}(\bar{\CS})=\sum_{k\in\K}g_k(\bar{\CS}^k)$ for all $\bar{\CS}\subseteq\bar{G}$. One can check that Problem~\eqref{eqn:LQR obj 2nd} (over the episodes in $\K$) can be equivalently written as 
\begin{equation}\label{eqn:obj 3rd}
\max_{\bar{\CS}\subseteq\G,|\bar{\CS}|=H}\bar{g}(\bar{\CS}). 
\end{equation}
Since Problem~\eqref{eqn:obj 3rd} is NP-hard, offline approximation algorithms have been proposed to solve Problem~\eqref{eqn:obj 3rd} with known system matrices $A$ and $B$. For example, the (offline) greedy algorithm can be applied to Problem~\eqref{eqn:obj 3rd} and return a solution $\bar{\CS}_g$ such that $\bar{g}(\bar{\CS}_g)\ge(1-e^{-c_g})g(\bar{\CS}_{\star})$,\footnote{The greedy algorithm initializes $\bar{\CS}_g=\emptyset$ and iteratively adds $\bar{s}_{\star}\in\arg\max_{\bar{s}\in\bar{\G}}(\bar{g}(\bar{\CS}_g\cup\{\bar{s}\})-\bar{g}(\bar{\CS}_g))$ to $\bar{\CS}_g$ until $|\bar{\CS}_g|=H$.} where $\bar{\CS}_{\star}$ is an optimal solution to Problem~\eqref{eqn:obj 3rd} and $c_g\in(0,1]$ is the submodularity ratio of $\bar{g}(\cdot)$ defined to be the largest $c_g\in\R$ such that  
\begin{equation}\label{eqn:submodularity ratio}
\sum_{\bar{s}\in\bar{\B}\setminus\bar{\A}}\big(\bar{g}(\bar{\A}\cup\{\bar{s}\})-\bar{g}(\bar{\A})\big)\ge c_g\big(\bar{g}(\bar{\A}\cup\bar{\B})-\bar{g}(\bar{\A})\big),
\end{equation}
for all $\bar{\A},\bar{\B}\subseteq\bar{\G}$ (see, e.g., \cite{bian2017guarantees,chamon2021approximately}, for more details).\footnote{Note that computing the exact value of $c_g$ from \eqref{eqn:submodularity ratio} can be intractable. Nonetheless, all of our arguments in Section~\ref{sec:large scale instances} still hold if $c_g$ is replaced with a computable lower bound (e.g., \cite{chamon2021approximately}).} Based on the above arguments, one can view the actuators selected by any $M_j$ from episodes $k=N_{p+1}$ to $N$ as a single action, denoted as $\bar{s}_j=(s_j^{N_{p+1}},\dots,s_j^K)$, which corresponds to the element in the $j$th iteration of the greedy algorithm.}

{\color{black}Based on the above arguments,} we introduce the following $(1-e^{-c_g})$-regret to measure the performance of Algorithm~\ref{alg:online for AS large scale}:
\begin{equation}
\label{eqn:regret of A_e large}
R_{\A_l}=(1-e^{-c_g})\big(\sum_{k=1}^NJ_k(\emptyset)-J_k(\CS^{k}_{\star})\big)-\E_{\A_l}\Big[\big(\sum_{k=1}^NJ_k(\emptyset)-J_k(\CS^k,u_{\CS^k}^k)\big)\Big],
\end{equation}
where $\E[\cdot]$ denotes the expectation with respect to the randomness of the algorithm, and $\CS^{k}_{\star}$ is an optimal solution to \eqref{eqn:LQR obj 2nd} (with cost matrices $Q^k,R^k,Q_f^k$ and an extra constraint $\CS_0=\cdots=\CS_{T-1}$). Note that the benchmark $\sum_{k=1}^NJ_k(\CS^k_{\star})$ in Eq.~\eqref{eqn:regret of A_e} is equivalent to the normalized benchmark $\sum_{k=1}^N(J_k(\emptyset)-J_k(\CS^{k}_{\star}))$ in Eq.~\eqref{eqn:regret of A_e large}, since one can replace the objective function $J_k(\CS)$ in \eqref{eqn:LQR obj 2nd} with $J_k(\emptyset)-J_k(\CS)$, and consider the maximization over all $\CS$, which does not change the optimal solution to \eqref{eqn:LQR obj 2nd}. In words, the benchmark $\sum_{k=1}^N(J_k(\emptyset)-J_k(\CS^{k}_{\star}))$ in $R_{\A_l}$ is the improvement (i.e., decrease) of the cost of Problem~\eqref{eqn:LQR obj} when the sets of actuators $\CS_{\star}^1,\dots,\CS_{\star}^N$ are selected, over the cost when no actuator is selected for any episode $k\in[N]$. Such a normalization of the benchmark is necessary when analyzing the $c$-regret of online algorithms for combinatorial optimization problems (see, e.g., \cite{streeter2008online}, for more details). Accordingly, $R_{\A_l}$ compares the optimal improvement in the cost of Problem~\eqref{eqn:LQR obj} against the improvement corresponding to  Algorithm~\ref{alg:online for AS large scale}. For our analysis in this section, we make the following assumption.
\begin{assumption}
\label{ass:stable A}
(a) The matrix $A\in\R^{n\times n}$ in system~\eqref{eqn:LTI} is stable; (b) the pair $(A,B_s)$ is $(\ell,\nu)$ controllable for all $s\in\G$.
\end{assumption}
Under Assumption~\ref{ass:stable A}(a), Assumption~\ref{ass:system parameters} is naturally satisfied by choosing $\G_j=\emptyset$ and $K_{\G_j}=0$ for all $j\in[p]$. Recall that Assumption~\ref{ass:stable A}(b) is a sufficient condition for Assumption~\ref{ass:controllability} to hold as we argued in Section~\ref{sec:ce LQR}. Using similar arguments to those in Section~\ref{sec:proof of thm 1}, one show that under Assumption~\ref{ass:stable A} and $\CE$ defined in \eqref{eqn:event E}, $J_k(\emptyset)$, $J_k(\CS^k,u_{\CS^k}^k)$ and $y_{j,s}^k$ scale linearly with $T$ for all $k\in\{N_{p+1},\dots,N\}$. 
In the sequel, we use the same notations as those defined in \eqref{eqn:def of Gamma S_t}-\eqref{eqn:max sigma_1 Q and R} and \eqref{eqn:parameters} to denote the parameters of Problem~\eqref{eqn:LQR obj}, except that we replace $|\CS|=H$ in the definitions with $|\CS|\le H$. We then have the following result; the proof is included in Appendix~\ref{app:regret bound proofs non-episodic}. {\color{black} The proof extends the analysis in \cite{streeter2008online} for submodular objective functions (i.e., $c_g=1$) to approximately submodular functions (i.e., $c_g\in(0,1]$), and adopts the analyses and results developed in Sections~\ref{sec:algorithm design for episodic setting}-\ref{sec:regret analysis} for Problem~\eqref{eqn:LQR obj}.}
\begin{proposition}
\label{prop:regret of A_e large}
Consider any $\delta\in\R_{>0}$ with $0<\delta<1$, and the same setting as Theorem~\ref{thm:regret of as alg}. Additionally, suppose Assumption~\ref{ass:stable A} holds, and in Algorithm~\ref{alg:online for AS large scale} let 
\begin{equation}\label{eqn:rho}
\rho=\Big(\frac{\log(|\G|N)+e}{N}\Big)^{1/3}.
\end{equation}
Then, for any $N>\max\{\tau_1p,(\log(|\G|N)+e)\}$, the following holds with probability at least $1-\delta$:
\begin{equation}
R_{\A_l}=\tilde{O}(n(m+n)^2p^2T|\G|^{3/2}H^2h(\CS_{\star})^{1/2}N^{2/3}),
\end{equation}
where $h(\CS_{\star})$ is defined in Eq.~\eqref{eqn:number of switchings}, and $\tilde{O}(\cdot)$ hides polynomial factors in $\log(|\G|N),\log((m+n)TN/\delta)$ and other parameters of Problem~\eqref{eqn:LQR obj}.
\end{proposition}

{\color{black} Next, we consider the non-episodic setting of Problem~\eqref{eqn:LQR obj}. Following the arguments in Section~\ref{sec:alg for non-episodic setting}, given a non-episodic instance of Problem~\eqref{eqn:LQR obj}, we can first construct an episodic instance with parameters $N^{\prime},T^{\prime}$, and then apply Algorithm~\ref{alg:online for AS large scale}. Here, the corresponding $H$ {\bf Exp3.S} subroutines in Algorithm~\ref{alg:online for AS large scale} are applied to the same MAB instances described above Eq.~\eqref{eqn:cost of arms in large-scale}, except that we scale the costs $y_{j,s}^k$ and $\hat{y}_{j,s_j^k}^k$ defined in Eqs.~\eqref{eqn:cost of arms in large-scale} and \eqref{eqn:y_hat}, respectively, by a multiplicative factor $1/T^{\prime}$. Similarly, following our arguments leading up to Eq.~\eqref{eqn:regret of A_e large} and using the notations in Section~\ref{sec:online AS non-episodic}, the $(1-e^{-c_g})$-regret of Algorithm~\ref{alg:online for AS large scale} in the non-episodic setting is given by 
\begin{equation}
\label{eqn:regret of A_e large nonepisodic}
R_{\A_l^{\prime}}=(1-e^{-c_g})\big(J(\emptyset)-J(\CS^{\star})\big)-\E_{\A^{\prime}_l}\Big[J(\emptyset)-\sum_{t=0}^{T-1}c_t(\CS_{0:t},u_{\CS_{0:t}})\Big].
\end{equation}
where $J(\CS^{\star})$ is defined as~\eqref{eqn:opt LQR cost S_t}, $\CS^{\star}=(\CS^{\star}_0,\dots,\CS^{\star}_{T-1})$ is an optimal solution to \eqref{eqn:LQR obj 2nd} (with an extra constraint $\CS^{\star}_{(k-1)T^{\prime}}=\cdots=\CS^{\star}_{kT^{\prime}-1}$ for all $k\in[N^{\prime}]$), $\emptyset$ is a short hand for the $T$-tuple $(\emptyset,\dots,\emptyset)$, and $c_t(\cdot,\cdot)$ is defined in Eq.~\eqref{eqn:cost per time step}. The result below is proved in Appendix~\ref{app:proofs for large-scale instances}.
\begin{proposition}\label{prop:regret of A_e large non-episodic}
Suppose Assumptions~\ref{ass:cost matrices}-\ref{ass:stable A} hold. Set $T^{\prime}=\lceil T^{1/4}\rceil$ and set $N^{\prime},\tau^{\prime}_1,\bar{y}_b^{\prime}$ in the same way as Theorem~\ref{thm:regret of as alg non-episodic}. Additionally, in Algorithm~\ref{alg:online for AS large scale} let $\rho =\frac{ \big(\log(|\G|N^{\prime})+e\big)^{1/3}}{T^{1/4}}$. Consider any $\delta\in\R_{>0}$ with $0<\delta<1$. Then, for any $T>\tau_1^{\prime}pT^{\prime}$ with $T^{\prime}>T_m=\frac{2}{1-\eta}(\frac{1}{4}\log T+\log\zeta)>0$, the following holds with probability at least $1-\delta$:
\begin{align}
&R_{\A_l^{\prime}}= \tilde{O}(n(m+n)^2p^2|\G|^{3/2}H^2h(\CS_{\star})^{1/2}T^{3/4}),
\end{align}
where $h(\CS_{\star})$ is defined in Eq.~\eqref{eqn:number of switchings}, and $\tilde{O}(\cdot)$ hides polynomial factors in $\log(|\G|N^{\prime}),\log((m+n)T/\delta)$ and other parameters of Problem~\eqref{eqn:LQR obj}.
\end{proposition}
}
Recalling our arguments in Remark~\ref{remark:dynamic regret}, one can check that the regret bound on $R_{\A_l}$ (resp., $R_{\A_l^{\prime}}$) in Propositions~\ref{prop:regret of A_e large} (resp., Proposition~\ref{prop:regret of A_e large non-episodic}) also holds for general benchmark $\CS_{\star}=(\CS_{\star}^1,\dots,\CS_{\star}^N)$ (resp., $\CS_{\star}=(\CS_{\star}^1,\dots,\CS_{\star}^{N^{\prime}})$), where $\CS_{\star}^k$ is any $\CS_{\star}^k\subseteq\G$ with $|\CS_{\star}^k|=H$.


\newpage
\section{Simulation Results}\label{sec:simulation results}
\subsection{Medium-size Episodic Instances}\label{sec:simulations medium size}
We validate the results in Theorem~\ref{thm:regret of as alg} for Algorithm~\ref{alg:online for AS}, using the episodic instances of Problem~\eqref{eqn:LQR obj} constructed as follows. We randomly generate the matrices $A\in\R^{5\times 5}$ and $B\in\R^{5\times 10}$ such that Assumption~\ref{ass:controllability} is satisfied and $A$ is unstable. Let each column in $B\in\R^{5\times 10}$ correspond to one candidate actuator, and let the cardinality constraint on the set of selected actuators be $H=2$. The cost matrices are set to be $Q^k=R^k=I$ and $Q_f^k=2I$ for all $k\in[N]$. The covariance of the disturbance $w_t^{k}$ is set to be $W=I$ for all $t\in\{0,\dots,T-1\}$ and all $k\in[N]$. The number of time steps in any episode $k\in[N]$ is set to be $T=5$. {\color{black} Given $A$ and $B$ generated above, we construct the known stabilizing $K_{\G_i}$ with $|\G_i|=2$ for all $i\in[5]$ in Assumption~\ref{ass:system parameters}. We then apply Algorithm~\ref{alg:online for AS} to the instances of Problem~\eqref{eqn:LQR obj} constructed above, where the parameters $\tau_1,\bar{y}_b$ are set according to Theorem~\ref{thm:regret of as alg} and $\lambda=1$ in the least squares \eqref{eqn:ls approach} for the system identification phase in Algorithm~\ref{alg:online for AS}. We obtain the regret $R_{\A_e}$ of Algorithm~\ref{alg:online for AS} against an optimal static benchmark $\CS_{\star}=(\CS_{\star}^1,\dots,\CS_{\star}^N)$, i.e., $\CS_{\star}^1=\cdots=\CS_{\star}^N=\CS_{\tt opt}$ and $\CS_{\tt opt}\in\arg\min_{\CS\subseteq\G,|\CS|=H}\sum_{k=1}^NJ_k(\CS)$, with $h(\CS_{\star})=1$.} In Fig.~\ref{fig:regret per round}, we plot $R_{\A_e}/N$ and $R_{\A_e}/\sqrt{N}$ for different values of the total number of episodes $N$.\footnote{All the numerical results in Section~\ref{sec:simulation results} are averaged over $20$ experiments and shaded regions display quartiles.} From Fig.~\ref{fig:regret per round}(a), we see that $R_{\A_e}/N$ decreases as $N$ increases. From Fig.~\ref{fig:regret per round}(b), we see that $R_{\A_e}/\sqrt{N}$ (slightly) increases as $N$ increases. Hence, the results in Fig.~\ref{fig:regret per round}(a) and (b) match with the regret bound given by Eq.~\eqref{eqn:R_Ae}. {\color{black}Specifically, the regret bound in Eq.~\eqref{eqn:R_Ae} scales as $\sqrt{N}\log N$, which implies that $R_{\A_e}/N=O(\log N/\sqrt{N})$ and $R_{\A_e}/\sqrt{N}=O(\log N)$.} Moreover, $R_{\A_e}/N$ is around $20$ when $N=3000$, since the regret bound in Eq.~\eqref{eqn:R_Ae} also contains other parameters of Problem~\eqref{eqn:LQR obj}.
\begin{figure}[htbp]
    \centering
    \subfloat[a][$R_{\A_e}/N$ vs. $N$]{
    \includegraphics[width=0.495\linewidth]{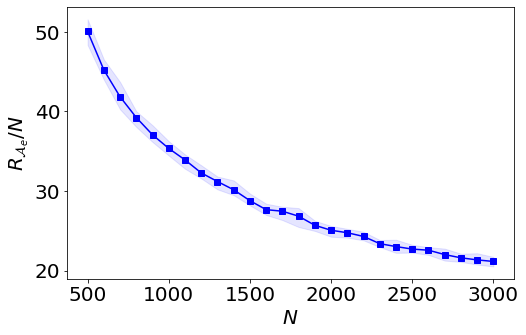}}
    \subfloat[b][$R_{\A_e}/\sqrt{N}$ vs. $N$]{
    \includegraphics[width=0.475\linewidth]{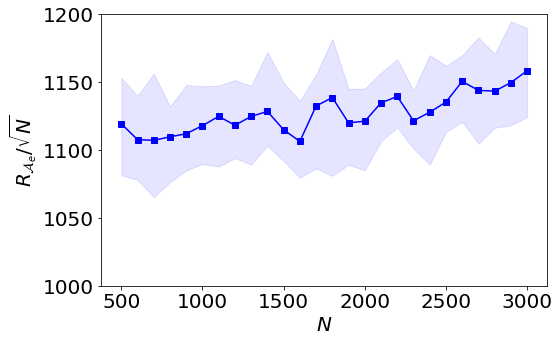}}
    \caption{$R_{\A_e}/N$ and $R_{\A_e}/\sqrt{N}$ against $N$. }
    \label{fig:regret per round}
\end{figure}
\begin{figure}[htbp]
    \centering
    \subfloat[a][$R_{\A_e}/N$ vs. $H$]{
    \includegraphics[width=0.495\linewidth]{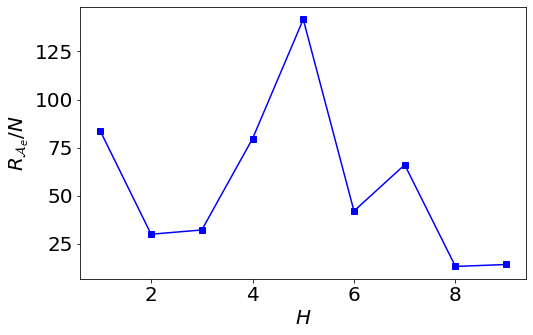}}
    \subfloat[b][Running times vs. $n$]{
    \includegraphics[width=0.475\linewidth]{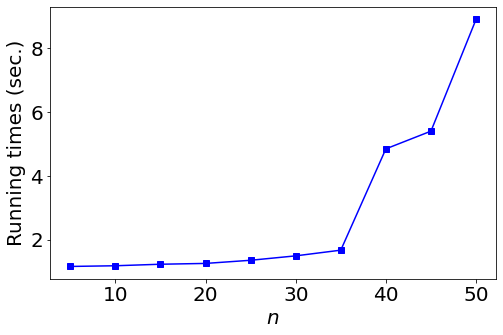}}
    \caption{The influence of the problem parameter $H$ (resp., $n$) on the performance (resp., running times) of Algorithm~\ref{alg:online for AS}.}
    \label{fig:influence of problem parameters}
\end{figure}

Now, we investigate how the other parameters of Problem~\eqref{eqn:LQR obj} influence the performance and running times of Algorithm~\ref{alg:online for AS}, using the instances of Problem~\eqref{eqn:LQR obj} constructed above (with different values of $H$ and $n$). In Fig.~\ref{fig:influence of problem parameters}, we plot $R_{\A_e}/N$ for different values of the cardinality constraint $H$, which shows that as $H$ increases, $R_{\A_e}/N$ first increases and then decreases. The result in Fig.~\ref{fig:influence of problem parameters}(a) matches with the regret bound in Eq.~\eqref{eqn:R_Ae}, since the regret bound contains the factor $\sqrt{|Q|}$ with $|\Q|={10\choose H}$ in the instances of Problem~\eqref{eqn:LQR obj} that we constructed. Note that $R_{\A_e}/N$ in Fig.~\ref{fig:influence of problem parameters}(a) decreases as $H$ increases from $1$ to $2$, which is potentially due to the fact that the regret bound in Eq.~\eqref{eqn:R_Ae} also contains the factor $p=\lceil 10/H\rceil$. In Fig.~\ref{fig:influence of problem parameters}, we plot the running times of Algorithm~\ref{alg:online for AS} for different values of $n$ (i.e., the dimension of the system matrix $A$). Similarly, we generate $A\in\R^{n\times n}$ for all $n=5,10,\dots,50$ and $B\in\R^{n\times 15}$ randomly. Fig.~\ref{fig:influence of problem parameters}(b) shows that the running time of Algorithm~\ref{alg:online for AS} increases as $n$ increases, which aligns with the time complexity $O((n+m)^3T+|\Q|T)$ of each episode in Algorithm~\ref{alg:online for AS}.

\begin{figure}[htbp]
    \centering
    \subfloat[a][{\color{black}$R_{\A_l^{\prime}}/T$ vs. $T$}]{
    \includegraphics[width=0.495\linewidth]{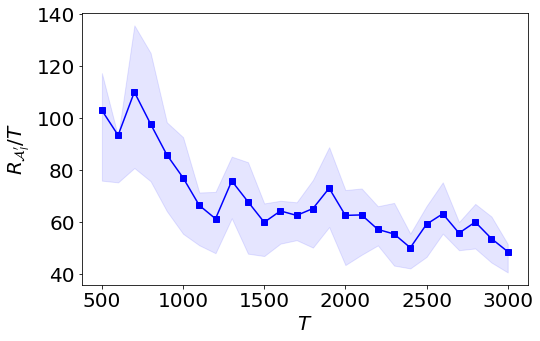}}
    \subfloat[b][{\color{black}$R_{\A_l^{\prime}}/T^{3/4}$ vs. $T$}]{
    \includegraphics[width=0.495\linewidth]{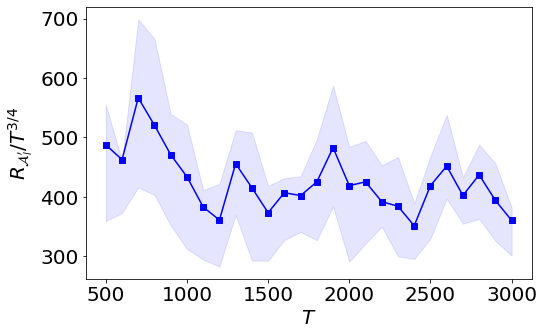}}
    \caption{{\color{black}$R_{\A_l^{\prime}}/T$ and $R_{\A_l^{\prime}}/T^{3/4}$ against $T$.}}
    \label{fig:large scale non-episodic regret}
\end{figure}
\subsection{Large-scale Non-episodic Instances}\label{sec:simulations large scale}
{\color{black}We next validate the results in Proposition~\ref{prop:regret of A_e large non-episodic} for Algorithm~\ref{alg:online for AS large scale} in the non-episodic setting. First, we randomly generate the matrices $A,B\in\R^{50\times 50}$ such that $A$ is stable. Let each column in $B\in\R^{50\times 50}$ correspond to one candidate actuator, and let the cardinality constraint on the set of selected actuators be $H=20$. The cost matrices are set to be $R=10^{-3}I,Q=Q_f=2\cdot10^{-3}I$. The covariance of the disturbance $w_t$ is set to be $W=I$ for all $t\in\{0,\dots,T-1\}$. Since $A$ is stable, we choose the stabilizing $K_{\G_i}=0$ for all $i\in[p]$ in Assumption~\ref{ass:system parameters}. As argued in Sections~\ref{sec:alg for non-episodic setting}-\ref{sec:large scale instances}, we can first construct an episodic instance of Problem~\eqref{eqn:LQR obj} given the non-episodic instance generated above, and then apply Algorithm~\ref{alg:online for AS large scale}, where the parameters $T^{\prime},N^{\prime},\tau_1^{\prime},\bar{y}_b^{\prime},\rho$ are set according to Proposition~\ref{prop:regret of A_e large non-episodic} and $\lambda=1$ in the system identification phase in Algorithm~\ref{alg:online for AS large scale}. Since ${50\choose 20}\approx5\times10^{13}$ and Problem~\eqref{eqn:LQR obj} is NP-hard, both Algorithm~\ref{alg:online for AS non-episodic} and obtaining an optimal solution $\CS^{\star}=(\CS^{\star}_1,\dots,\CS^{\star}_{T-1})$ to Problem~\eqref{eqn:LQR obj 2nd} become intractable. Thus, we obtain the regret $R_{\A_l^{\prime}}$ of Algorithm~\ref{alg:online for AS large scale} (in the non-episodic setting) against a random static benchmark $\CS^{\star}=(\CS_1^{\star},\dots,\CS_{T-1}^{\star})$, where $\CS^{\star}_0=\cdots=\CS^{\star}_{T-1}=\CS_{\tt rand}$ and $\CS_{\tt rand}$ is chosen from $\G$ randomly with $|\CS_{\tt rand}|=H$. Moreover, we replace $1-e^{-c_g}$ with $1$ in Eq.~\eqref{eqn:regret of A_e large nonepisodic} so that $R_{\A_l^{\prime}}$ is lifted to the $1$-regret of Algorithm~\ref{alg:online for AS large scale}. In Fig.~\ref{fig:large scale non-episodic regret}, we plot $R_{\A_l^{\prime}}/T$ and $R_{\A_l^{\prime}}/T^{3/4}$ for different values of the total time steps $T$. Fig.~\ref{fig:large scale non-episodic regret}(a) shows that $R_{\A_l^{\prime}}/T$ decreases as $T$ increases, which aligns with the $T^{3/4}$-regret bound in Eq.~\eqref{eqn:regret of A_e large nonepisodic}. Fig.~\ref{fig:large scale non-episodic regret}(b) shows that $R_{\A_l^{\prime}}/T^{3/4}$ also tends to decrease as $T$ increases, which potentially implies that the regret bound is not tight in terms of $T$. Note that Fig.~\ref{fig:large scale non-episodic regret} also shows that Algorithm~\ref{alg:online for AS large scale} yields good regret performance in terms of the stronger notion of $1$-regret.
\begin{figure}[htbp]
    \centering
    \subfloat[a][{\color{black}Running times vs. $m$}]{
    \includegraphics[width=0.495\linewidth]{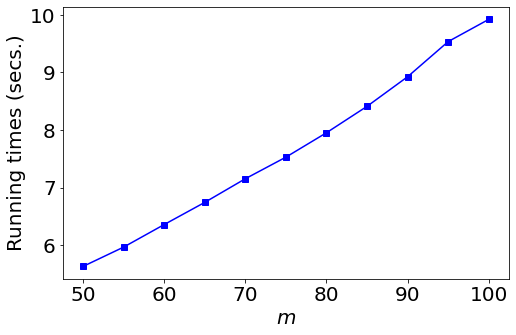}}
    \subfloat[b][{\color{black} Running times vs. $H$}]{
    \includegraphics[width=0.475\linewidth]{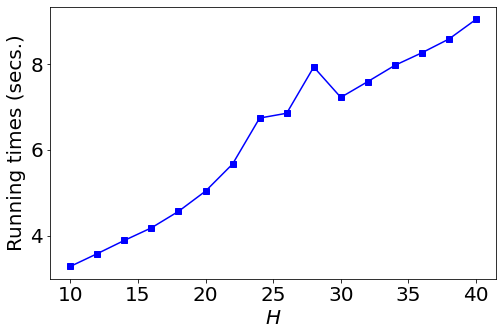}}
    \caption{{\color{black}The running times of Algorithm~\ref{alg:online for AS large scale} against $m$ and $H$.}}
    \label{fig:runtime large scale}
\end{figure}

As for the running times of Algorithm~\ref{alg:online for AS large scale}, we plot the running times of Algorithm~\ref{alg:online for AS large scale} when applied to the non-episodic instances constructed above with different values of $m$ and $H$. Fig.~\ref{fig:runtime large scale} aligns with the time complexity $O(H(n+m)^3T)$ of Algorithm~\ref{alg:online for AS large scale} and shows that Algorithm~\ref{alg:online for AS large scale} is suitable for large-scale (non-episodic) instances of Problem~\eqref{eqn:LQR obj}.}

\section{Conclusion}\label{sec:Conclusion}
We studied the simultaneous actuator selection and controller design problem for LQR, when the system matrices are unknown. We proposed online actuator selection algorithms to solve the problem by interacting with the system in an online manner, where the algorithms specify both the sets of actuators to be utilized under a cardinality constraint and the controls corresponding to the sets of selected actuators. We considered episodic and non-episodic settings of the problem, and showed that our online algorithms yield sublinear regrets with respect to the horizon length considered in the problem. We extended our algorithm design and analysis to efficiently handle instances of the problem when both the total number of candidate actuators and the cardinality constraint scale large. We numerically validated our theoretical results.

\bibliography{main}

\appendix
\section{Proofs pertaining to the certainty equivalence approach}\label{app:ce approach}
\subsection*{Proof of Lemma~\ref{lemma:mismatch of K_hat and P_hat}}
In this proof, we drop the dependency of various terms on $\CS$ and $k$ for notational simplicity, while the proof holds for any $\CS\subseteq\G$ and any $k\in[N]$. For example, we write $P_{t,\CS}^{k}$ as $P_t$, and write $B_{\CS}$ as $B$. To prove \eqref{eqn:est error of K_hat}, we first note that 
\begin{equation*}
\norm{B^{\top}P_t B - \hat{B}^{\top}\hat{P}_t\hat{B}}
\le\norm{B^{\top}P_t B - B^{\top}P_t\hat{B}} + \norm{B^{\top}P_t\hat{B} - B^{\top}\hat{P}_t\hat{B}} 
+ \norm{B^{\top}\hat{P}_t\hat{B} - \hat{B}^{\top}\hat{P}_t\hat{B}},
\end{equation*}
which implies that 
\begin{align}\nonumber
&\norm{B^{\top}P_t B - \hat{B}^{\top}\hat{P}_t\hat{B}}\\\nonumber
\le&\norm{\hat{B}}\norm{\hat{P}_t}\varepsilon + \norm{\hat{B}}\norm{B}D\varepsilon + \norm{B}\norm{P_t}\varepsilon\\\nonumber
\le&(\Gamma+\varepsilon)(\Gamma+D\varepsilon)\varepsilon + (\Gamma+\varepsilon)\Gamma D\varepsilon + \Gamma^2\varepsilon\\
\le&\tilde{\Gamma}^2D\varepsilon + \tilde{\Gamma}^2D\varepsilon + \Gamma^2\varepsilon\le3\tilde{\Gamma}^2D\varepsilon,\label{eqn:diff of one term in K_k}
\end{align}
where $\Gamma$ (i.e., $\Gamma_{\CS}$) is defined in Eq.~\eqref{eqn:def of Gamma S_t}. Note that $\sigma_n(R^{k})\ge1$ from Assumption~\ref{ass:cost matrices}. Also recalling the definitions of $K_{t-1,\CS}^{k}$ and $\hat{K}_{t-1,\CS}^{k}$ in Eqs.~\eqref{eqn:optimal control law} and \eqref{eqn:control gain S_t est}, respectively, the rest of the proof for \eqref{eqn:est error of K_hat} now follows from similar arguments to those for \cite[Lemma~2]{mania2019certainty}.

To prove Eq.~\eqref{eqn:est error of P_hat}, one can first use Eq.~\eqref{eqn:control gain S_t est} to rewrite Eq.~\eqref{eqn:recursion for P_k(S_t) est} as
\begin{equation*}
\label{eqn:another recursion for Pk_hat}
\hat{P}_{t-1} = Q + \hat{K}_{t-1}^{\top}R\hat{K}_{t-1} +(\hat{A}+\hat{B}\hat{K}_{t-1})^{\top}\hat{P}_t(\hat{A}+\hat{B}\hat{K}_{t-1}).
\end{equation*}
Similarly, one can obtain from Eqs.~\eqref{eqn:control gain}-\eqref{eqn:recursion for P_k} the following:
\begin{equation*}
\label{eqn:another recursion for Pk}
P_{t-1} = Q + K_{t-1}^{\top}R K_{t-1} +(A+BK_{t-1})^{\top}P_t(A+BK_{t-1}).
\end{equation*}
Now, using similar arguments to those above for \eqref{eqn:diff of one term in K_k}, one can show via \eqref{eqn:est error of K_hat} that 
\begin{align}\nonumber
&\norm{A + BK_{t-1}-\hat{A}-\hat{B}\hat{K}_{t-1}}\\\nonumber
\le&\norm{A-\hat{A}}+\norm{BK_{t-1}-B\hat{K}_{t-1}}+\norm{B\hat{K}_{t-1}-\hat{B}\hat{K}_{t-1}}\\
\le&\varepsilon + \Gamma\varepsilon + (\Gamma+\varepsilon)3\tilde{\Gamma}^3 D\varepsilon\le4\tilde{\Gamma}^4D\varepsilon.\label{eqn:Lk minus Lk_hat}
\end{align}
Denoting $\hat{L}_{t-1}\triangleq\hat{A}+\hat{B}\hat{K}_{t-1}$ and $L_{t-1}=A+BK_{t-1}$, we have that 
\begin{equation*}
\label{eqn:Pk minus Pk_hat}
\norm{P_{t-1}-\hat{P}_{t-1}}\le\norm{K_{t-1}^{\top}RK_{t-1} - \hat{K}_{t-1}^{\top}R\hat{K}_{t-1}} \\+ \norm{L_{t-1}^{\top}P_kL_{t-1} - \hat{L}_{t-1}^{\top}\hat{P}_t\hat{L}_{t-1}}.
\end{equation*}
Similarly, one can show that 
\begin{align}\nonumber
&\norm{K_{t-1}^{\top}RK_{t-1} - \hat{K}_{t-1}^{\top}R\hat{K}_{t-1}}\\\nonumber
\le&(\Gamma+3\tilde{\Gamma}^3 D\varepsilon)\sigma_1(R)3\tilde{\Gamma}^3 D\varepsilon+3\tilde{\Gamma}^3 D\varepsilon\sigma_1(R)\Gamma\varepsilon\\\nonumber
\le&3\tilde{\Gamma}^3D\sigma_1(R)\varepsilon(2\Gamma+3\tilde{\Gamma}^3D\varepsilon)\\
\le&6\tilde{\Gamma}^4\sigma_1(R)D\varepsilon + 9\tilde{\Gamma}^6\sigma_1(R)D^2\varepsilon^2\le 15\tilde{\Gamma}^6\sigma_1(R)D^2\varepsilon.\label{eqn:first term in Pk minus Pk_hat}
\end{align}
Let us also denote $\Delta L_{t-1}\triangleq L_{t-1}-\hat{L}_{t-1}$. Noting that $\norm{A+BK_{t-1}}\le\tilde{\Gamma}^2$ and recalling~\eqref{eqn:Lk minus Lk_hat}, one can show that
\begin{align}\nonumber
&\norm{L_{t-1}^{\top}P_tL_{t-1} - \hat{L}_{t-1}^{\top}\hat{P}_t\hat{L}_{t-1}}\\\nonumber
\le&(\norm{\Delta L_{t-1}}+\tilde{\Gamma}^2)(\Gamma+D\varepsilon)\norm{\Delta L_{t-1}}+ (\norm{\Delta L_{t-1}}+\tilde{\Gamma}^2)\tilde{\Gamma}^2D\varepsilon+\norm{\Delta L_{t-1}}\Gamma\tilde{\Gamma}^2\\
\le&16\tilde{\Gamma}^9D^3\varepsilon^2 + 4\tilde{\Gamma}^7D^2\varepsilon + 4\tilde{\Gamma}^6D^2\varepsilon^2 + \tilde{\Gamma}^4D\varepsilon + 4\tilde{\Gamma}^6\Gamma D\varepsilon\le29\tilde{\Gamma}^9D^3\varepsilon.\label{eqn:second term in Pk minus Pk_hat}         
\end{align} 
The inequality in \eqref{eqn:est error of P_hat} now follows from combining \eqref{eqn:first term in Pk minus Pk_hat} and \eqref{eqn:second term in Pk minus Pk_hat}, and noting the fact that $\sigma_1(R^{k}_{\CS})\le\sigma_R$.\hfill\qed

\subsection*{Proof of Lemma~\ref{lemma:bound on est error of P_hat 1}}
Our proof is based on a similar idea to that for the proof of \cite[Proposition~3]{mania2019certainty}. To simplify the notations in the proof, we assume that $T=\varphi\ell$ for some $\varphi\in\Z_{\ge1}$; otherwise we only need to focus on the time steps from $T-\tilde{\varphi}\ell$ to $T$ of Problem~\eqref{eqn:LQR obj}, where $\tilde{\varphi}$ is the maximum positive integer such that $T-\tilde{\varphi}\ell\ge0$. Under the assumption that $T=\varphi\ell$, we need to show that \eqref{eqn:bound on est error of P_hat} holds for $t\in\{0,\ell,\dots,\varphi\ell\}$. Note that \eqref{eqn:bound on est error of P_hat} holds for $t=T$, since $P_{T,\CS}^{k}=\hat{P}_{T,\CS}^{k}=Q_f^{k}$. In the rest of this proof, we drop the dependency of various terms on $\CS$ and $k$ for notational simplicity, while the proof works for any $\CS\subseteq\G$ (with $|\CS|=H$) and any $k\in[N]$. First, for any $\gamma\in\Z_{\ge1}$ (with $\gamma\ell\le T$), let us consider the noiseless LQR problem for system~\eqref{eqn:LTI}, i.e., $x_{t+1}=Ax_t+Bu_t$, from time step $\gamma\ell$ to $T$. Let the initial state $x_{\gamma\ell}$ be any vector in $\R^n$ with $\norm{x_{\gamma\ell}}\le1$. Similarly to Eq.~\eqref{eqn:episode t cost}, we define the cost 
\begin{equation*}
\tilde{J}(A,B,u_{\gamma\ell:T-1})\triangleq\Big(\sum_{j=\gamma}^{\varphi-1}\sum_{t=0}^{\ell-1}x_{j\ell+t}^{\top}Q x_{j\ell+t}+u_{j\ell+t}^{\top}Ru_{j\ell+t}\Big)+x_{T}^{\top}Q_f x_{T},
\end{equation*}
where $u_{\gamma\ell:T-1}=(u_{\gamma\ell},\dots,u_{T-1})$. Again, we know from \cite{bertsekas2017dynamic} that the minimum value of $\tilde{J}(A,B,u_{\gamma\ell:T-1})$ (over all control policies $u_{\gamma\ell:T-1}$) is achieved by $\tilde{u}_t=K_{t,\CS}x_t$ for all $t\in\{\gamma\ell,\gamma\ell+1,\dots,T-1\}$, where $K_{t,\CS}$ is given by Eq.~\eqref{eqn:control gain}. Moreover, we know that $\tilde{J}(A,B,\tilde{u}_{\gamma\ell:T-1})=x_{\gamma\ell}^{\top}P_{\gamma\ell}x_{\gamma\ell}$, where $P_{\gamma\ell}$ can be obtained from Eq.~\eqref{eqn:recursion for P_k} with $P_T=Q_f$.

Next, consider another LTI system given by $\hat{x}_{t+1}=\hat{A}\hat{x}_{t}+\hat{B}\hat{u}_{t}$ over the same time horizon and starting from the same initial state $\hat{x}_{\gamma\ell}=x_{\gamma\ell}$ as we described above. Define the corresponding cost as
\begin{equation*}
J(\hat{A},\hat{B},\hat{u}_{\gamma\ell:T-1})\triangleq\Big(\sum_{j=\gamma}^{\varphi-1}\sum_{t=0}^{\ell-1}\hat{x}_{j\ell+t}^{\top}Q \hat{x}_{j\ell+t}+\hat{u}_{j\ell+t}^{\top}R\hat{u}_{j\ell+t}\Big)+\hat{x}_{T}^{\top}Q_f \hat{x}_{T},
\end{equation*}
where $\hat{u}_{\gamma\ell:T-1}=(\hat{u}_{\gamma\ell},\dots,\hat{u}_{T-1})$. Similarly, the minimum value of $J(\hat{A},\hat{B},\hat{u}_{\gamma\ell:T-1})$ (over all control policies $\hat{u}_{\gamma\ell:T-1}$) is achieved by $u^{\prime}_{t}=\hat{K}_{t,\CS}^{k}\hat{x}_{t}$ for all $t\in\{\gamma\ell,\gamma\ell+1,\dots,T-1\}$, where $\hat{K}_{t,\CS}$ is given in Eq.~\eqref{eqn:control gain S_t est}. The minimum cost is given by $J(\hat{A},\hat{B},u^{\prime}_{\gamma\ell:T-1})=x_{\gamma\ell}^{\top}\hat{P}_{\gamma\ell}x_{\ell\gamma}$, where $\hat{P}_{\gamma\ell}$ can be obtained from Eq.~\eqref{eqn:recursion for P_k(S_t) est} with $\hat{P}_T=Q_f$. Moreover, note that  
\begin{align*}
J(\hat{A},\hat{B},u^{\prime}_{\gamma\ell:T-1})\le J(\hat{A},\hat{B},\hat{u}_{\gamma\ell:T-1}),
\end{align*}
where $\hat{u}_{\gamma\ell:T-1}$ is an arbitrary control policy and the inequality follows from the optimality of $u^{\prime}_{\gamma\ell:T-1}$. Recalling that $\varepsilon$ is assumed to be small enough such that the right-hand side of \eqref{eqn:bound on est error of P_hat} is smaller than or equal to $1$, one can obtain from Lemma~\ref{lemma:lower bound on sigma_1 C_hat} that $\sigma_n(\hat{\C}_{\ell,\CS})\ge\frac{\nu}{2}>0$, which implies that the pair $(\hat{A},\hat{B})$ is controllable. Now, one can follow similar arguments to those for the proof of \cite[Proposition~3]{mania2019certainty} and show that $\hat{u}_{\gamma\ell:\varphi\ell-1}$ can be chosen such that $\hat{x}_{\varphi^{\prime}\ell}=x_{\varphi^{\prime}\ell}$ for all $\varphi^{\prime}\in\{\gamma,\gamma+1,\dots,\varphi\}$. It then follows from the above arguments that 
\begin{align}
&x_{\gamma\ell}^{\top}\hat{P}_{\gamma\ell}x_{\gamma\ell} - x_{\gamma\ell}^{\top}P_{\gamma\ell}x_{\gamma\ell}\le\Big(\sum_{j=\gamma}^{\varphi-1}\sum_{t=0}^{\ell-1}\hat{x}_{j\ell+t}^{\top}Q\hat{x}_{j\ell+t}+\hat{u}_{j\ell+t}^{\top}R\hat{u}_{j\ell+t}-x_{j\ell+t}^{\top}Qx_{j\ell+t}-u_{j\ell+t}^{\top}Ru_{j\ell+t}\Big).\label{eqn:J diff}
\end{align}
One can further follow similar arguments to those for the proof of \cite[Propostion~3]{mania2019certainty} and show that $\hat{u}_{\gamma\ell:T-1}$ in Eq.~\eqref{eqn:J diff} can be chosen such that the following holds:
\begin{equation}
\label{eqn:J diff 1}
x_{\gamma\ell}^{\top}\hat{P}_{\gamma\ell}x_{\gamma\ell} - x_{\gamma\ell}^{\top}P_{\gamma\ell}x_{\gamma\ell}\le\frac{1}{2}\mu_{\gamma\ell}\varepsilon,
\end{equation}
under the assumption that $\frac{1}{2}\mu_{\gamma\ell}\varepsilon\le1$, where $\mu_{\gamma\ell}$ (i.e., $\mu_{\gamma\ell,\CS}^{k}$) is defined in Eq.~\eqref{eqn:def of mu}. Now, reversing the roles of $(A,B)$ and $(\hat{A},\hat{B})$ in the arguments above, one can also obtain that
\begin{equation}
\label{eqn:J diff 2}
x_{\gamma\ell}^{\top}P_{\gamma\ell}x_{\gamma\ell} - x_{\gamma\ell}^{\top}\hat{P}_{\gamma\ell}x_{\gamma\ell}\le\frac{1}{2}\mu_{\gamma\ell}\frac{\norm{\hat{P}_{\gamma\ell}}}{\norm{P_{\gamma\ell}}}\varepsilon,
\end{equation}
under the assumption that $\frac{1}{2}\mu_{\gamma\ell}\frac{\norm{\hat{P}_{\gamma\ell}}}{\norm{P_{\gamma\ell}}}\varepsilon\le1$.\footnote{Note that the proof technique in \cite{mania2019certainty} is for the infinite-horizon (noiseless) LQR problem, which can be adapted to the finite-horizon setting studied here. The details of such an adaption are omitted for conciseness.} Note from Eq.~\eqref{eqn:recursion for P_k} and Assumption~\ref{ass:cost matrices} that $P_{\gamma\ell}\succeq Q\succeq I$, and note that \eqref{eqn:J diff 1} and \eqref{eqn:J diff 2} hold for any $x_{\gamma\ell}\in\R^n$ with $\norm{x_{\gamma\ell}}\le1$ as we discussed above. It then follows from \eqref{eqn:J diff 1} that $\lambda_1(\hat{P}_{\gamma\ell})\le\lambda_1(P_{\gamma\ell})+1$, i.e., $\norm{\hat{P}_{\gamma\ell}}\le\norm{P_{\gamma\ell}}+1\le2\norm{P_{\gamma\ell}}$. Hence, we have from \eqref{eqn:J diff 1} and \eqref{eqn:J diff 2} that $\lambda_1(\hat{P}_{\gamma\ell}-P_{\gamma\ell})\le\mu_{\gamma\ell}\varepsilon$ and $\lambda_1(P_{\gamma\ell}-\hat{P}_{\gamma\ell})\le\mu_{\gamma\ell}\varepsilon$, which further implies \eqref{eqn:bound on est error of P_hat}.\hfill\qed

\subsection*{Proof of Proposition~\ref{prop:bound on est error of P_hat}}
Similarly to the proof of Lemma~\ref{lemma:bound on est error of P_hat 1}, we assume for simplicity that $T=\varphi\ell$ for some $\varphi\in\Z_{\ge1}$; the proof will follow similarly if this assumption on $T$ does not hold. Note that $\sigma_R\ge1$ (from Assumption~\ref{ass:cost matrices}), and that $\varepsilon$ is assumed to satisfy that $\mu_{\CS}\varepsilon\le1$. Recalling the definition of $\mu_{t,\CS}^{k}$ in Eq.~\eqref{eqn:def of mu}, one can then show that $\mu_{t,\CS}^{k}\varepsilon\le1$ for all $t\in\{0,1,\dots,T\}$, which implies via Lemma~\ref{lemma:bound on est error of P_hat 1} that $\norm{P_{t,\CS}^{k}-\hat{P}_{t,\CS}^{k}}\le\mu_{t,\CS}^{k}\varepsilon$ and thus \eqref{eqn:bound on est error of P_hat general} holds, for all $t\in\{0,\ell,\dots,\varphi\ell\}$. Now, consider any $\gamma\in[\varphi]$. Since $\varepsilon\le1$ and $\mu_{\gamma\ell,\CS}^{k}\ge1$, we have from Lemma~\ref{lemma:mismatch of K_hat and P_hat} that $\norm{P_{\gamma\ell-1,\CS}^{k}-\hat{P}_{\gamma\ell-1,\CS}^{k}}\le(44\tilde{\Gamma}_{\CS}^9\sigma_R)\mu_{\gamma\ell,\CS}^{k}\varepsilon$. Repeatedly applying \eqref{eqn:est error of P_hat} in Lemma~\ref{lemma:mismatch of K_hat and P_hat}, we obtain that $\norm{P_{\gamma\ell-j,\CS}^{k}-\hat{P}_{\gamma\ell-j,\CS}^{k}}\le(44\tilde{\Gamma}_{\CS}^9\sigma_R)^j\mu_{\CS}\varepsilon$ for all $j\in[\ell-1]$. Thus, we have shown that \eqref{eqn:bound on est error of P_hat general} also holds for all $t \in\{\gamma\ell-j:\gamma\in[\varphi],j\in[\ell-1]\}$. Combining the above arguments together completes the proof of \eqref{eqn:bound on est error of P_hat general} for all $t\in\{0,\dots,T\}$. The proof of \eqref{eqn:est error of K_hat general} now follows from similar arguments to those for \eqref{eqn:est error of K_hat} in the proof of Lemma~\ref{lemma:mismatch of K_hat and P_hat}.\hfill\qed

\subsection*{Proof of Lemma~\ref{lemma:J_hat minus J}}
For notational simplicity, we again drop the dependency of various terms on $k$ and $\CS$ in this proof. First, we let $J(x_t)$ be the cost of using the optimal control gain $K_t$ given in Eq.~\eqref{eqn:control gain}, starting from the state $x_t$, where $x_t$ (i.e., $x_t^{k}$) is the state at time step $t$ when the certainty equivalence control $u_{t^{\prime}}=\hat{K}_{t^{\prime}}x_{t^{\prime}}$ is used for all $t^{\prime}\in\{0,1,\dots,t-1\}$. Therefore, we have from our discussions in Sections~\ref{sec:LQR} that
\begin{equation}
\label{eqn:J(x_k)}
J(x_t)=x_t^{\top}P_tx_t+\sum_{i=t}^{T-1}P_{i+1}W,
\end{equation}
where $P_t$ (i.e., $P_{t,\CS}$) is given by Eq.~\eqref{eqn:recursion for P_k} with $P_T=Q_f$. Denoting $c_t=x_t^{\top}Qx_t+u_t^{\top}Ru_t$, where $u_t=\hat{K}_tx_t$, we can rewrite $\hat{J}_k(\CS)$ defined in Eq.~\eqref{eqn:episode t ce cost} as
\begin{equation*}
\label{eqn:J_hat rewrite}
\hat{J}_k(\CS)=\E\Big[\big(\sum_{t=0}^{T-1}c_t\big)+x_T^{\top}Q_fx_T\Big].
\end{equation*}
It now follows that
\begin{align}\nonumber
&\hat{J}_k(\CS)-J_k(\CS)\\\nonumber
=&\E\Big[\big(\sum_{t=0}^{T-1}c_t+J(x_t)-J(x_t)\big)+x_T^{\top}Q_fx_T\Big]-J(x_0)\\
=&\E\Big[\big(\sum_{t=0}^{T-1}c_t+J(x_{t+1})-J(x_t)\big)\Big].\label{eqn:J_hat minus J telescope}
\end{align}
To obtain Eq.~\eqref{eqn:J_hat minus J telescope}, we use the telescope sum and note from Eq.~\eqref{eqn:J(x_k)} that $J(x_T)=x_T^{\top}Q_fx_T$. Next, considering a single term in the summation on the right hand side of Eq.~\eqref{eqn:J_hat minus J telescope}, we have
\begin{equation*}
\E\Big[c_t+J(x_{t+1})-J(x_t)\Big]=\E\Big[x_t^{\top}(Q+\hat{K}_t^{\top}R\hat{K}_t)x_t+x_{t+1}^{\top}P_{t+1}x_{t+1}-x_t^{\top}P_tx_t-P_{t+1}W\Big].
\end{equation*}
Noting that $x_{t+1}=(A+B\hat{K}_t)x_t+w_t$ and recalling that $w_t$ is a zero-mean white Gaussian noise process, we then obtain
\begin{equation}
\E\Big[c_t+J(x_{t+1})-J(x_t)\Big]=\E\Big[x_t^{\top}(Q+\hat{K}_t^{\top}R\hat{K}_t)x_t+x_t^{\top}\big((A+B\hat{K}_t)^{\top}P_{t+1}(A+B\hat{K}_t)-P_t\big)x_t\Big].\label{eqn:J_hat minus J single}
\end{equation}
Since $P_t$ satisfies the recursion in Eq.~\eqref{eqn:recursion for P_k}, one can use Eq.~\eqref{eqn:control gain} and obtain
\begin{equation*}
\label{eqn:P_k+1 to P_k}
P_t = Q + K_t^{\top}R K_t +(A+BK_t)^{\top}P_{t+1}(A+BK_t).
\end{equation*}
Using similar arguments to those in the proof of \cite[Lemma~10]{fazel2018global}, one can now show that
\begin{equation}
\label{eqn:J_hat minus J single 1}
\E\Big[c_t+J(x_{t+1})-J(x_t)\Big]=\E\Big[x_t^{\top}\Delta K_t^{\top}(R+B^{\top}P_tB^{\top})\Delta K_tx_t\Big],
\end{equation}
where $\Delta K_t=\hat{K}_t-K_t$. It then follows from Eqs.~\eqref{eqn:J_hat minus J telescope} and \eqref{eqn:J_hat minus J single 1} that Eq.~\eqref{eqn:J_hat minus J} holds, completing the proof.\hfill\qed

\subsection*{Proof of Lemma~\ref{lemma:bound on Psi_hat}}
Recall the definition of $\hat{\Psi}_{t_2,t_1}^{k}(\CS)$ (resp., $\Psi_{t_2,t_1}^{k}(\CS)$) in Eq.~\eqref{eqn:Psi} (resp., Eq.~\eqref{eqn:Psi_hat}), and note that 
\begin{equation*}
A+B_{\CS}\hat{K}_{t,\CS}^{k}=A+B_{\CS}K_{t,\CS}^{k}+B_{\CS}(\hat{K}_{t,\CS}^{k}-K_{t,\CS}^{k}),
\end{equation*}
for all $t\in\{0,1,\dots,T-1\}$. The proof now follows from Lemma~\ref{lemma:power of perturbed matrix} in Appendix~\ref{app:tech lemmas}.\hfill\qed

\subsection*{Proof of Proposition~\ref{prop:bound on J_hat minus J}}
First, we provide an upper bound on $\Sigma_t^{k}\triangleq\E[x^{k}_tx^{k\top}_t]$ for all $t\in\{0,1,\dots,T-1\}$. Recalling that we have assumed that $x^{k}_0=0$, it follows that $\Sigma_0^{k}=0$. Considering any $t\in[T-1]$, one can show via Eq.~\eqref{eqn:LTI with B_i} that
\begin{equation*}
\label{eqn:recursion for Sigma_k}
\Sigma_t^{k}=(A+B_{\CS}\hat{K}_{t-1,\CS}^{k})\Sigma_{t-1}^{k}(A+B_{\CS}\hat{K}_{t-1,\CS}^{k})^{\top} + W,
\end{equation*}
which implies that
\begin{equation*}
\label{eqn:expression for Sigma_k}
\Sigma_t^{k}= \sum_{i=0}^{t-1}\hat{\Psi}_{t-1,i}^{k}(\CS)W\hat{\Psi}_{t-1,i}^{k}(\CS)^{\top},
\end{equation*}
where $\hat{\Psi}_{t-1,i}^{k}(\CS)$ is defined in Eq.~\eqref{eqn:Psi_hat}. Now, under the assumption on $\varepsilon$, we can apply the upper bound on $\norm{\hat{\Psi}_{t,i}^{k}(\CS)}$ in Lemma~\ref{lemma:bound on Psi_hat} and obtain
\begin{align}\nonumber
\norm{\Sigma_t^{k}}&\le\sigma_1(W) \zeta_{\CS}^2\sum_{i=1}^{t}(\frac{1+\eta_{\CS}}{2})^{2(t-i)}\\\nonumber
&\le\frac{\sigma_1(W)\zeta_{\CS}^2}{1-(\frac{1+\eta_{\CS}}{2})^2}\le\frac{4\sigma_1(W)\zeta_{\CS}^2}{1-\eta_{\CS}^2}.\label{eqn:upper bound on Sigma_k}
\end{align}
One can also show via Eq.~\eqref{eqn:J_hat minus J} in Lemma~\ref{lemma:J_hat minus J} that 
\begin{equation*}
\label{eqn:upper bound on J_hat minus J pre}
\hat{J}_k(\CS)-J_k(\CS)\le\sum_{t=0}^{T-1}\norm{\Sigma_t^{k}}\norm{R^{k}+B_{\CS}^{\top}P_t^{k}B_{\CS}}\norm{\Delta\hat{K}_t^{k}(\CS)}_F^2.
\end{equation*}
Moreover, under the assumption on $\varepsilon$, we can also apply the upper bound on $\norm{\Delta\hat{K}_t^{k}(\CS)}$ in Proposition~\ref{prop:bound on est error of P_hat}, where $\Delta\hat{K}_t^{k}(\CS)\in\R^{m_{\CS}\times n}$. Also noting that $\norm{\Delta\hat{K}_t^{k}(\CS)}_F^2\le\min\{n,m_{\CS}\}\norm{\Delta\hat{K}_t^{k}(\CS)}^2$, and recalling the definition of $\Gamma_{\CS}$, we can then combine the above arguments together and obtain~\eqref{eqn:upper bound on J_hat minus J}.\hfill\qed

\section{Proofs pertaining to Theorem~\ref{thm:regret of as alg}}\label{app:regret bound proofs}
\subsection*{Proof of Lemma~\ref{lemma:event E high prob}}
First, we know from Assumption~\ref{ass:noise process} that $w_t^{k}\overset{\text{i.i.d.}}{\sim}\CN(0,\sigma^2I)$ for all $t\in\{0,\dots,T-1\}$ and for all $k\in[N]$. One can then apply \cite[Lemma~34]{cassel2020logarithmic} and obtain that $\Prob(\CE_w)\ge1-\delta/8$. Similarly, recalling that $\tilde{w}_t^{k}\overset{\text{i.i.d.}}{\sim}\CN(0,2\sigma^2\eta_0^2I)$ for all $t\in\{0,\dots,T-1\}$ and for all $k\in[N]$, one can apply \cite[Lemma~34]{cassel2020logarithmic} and obtain that $\Prob(\CE_{\tilde{w}})\ge1-\delta/8$.
    
Next, for any $j\in[p]$, we have from Lemma~\ref{lemma:est error of Theta_i hat} that with probability at least $1-\delta/(8p)$,
\begin{equation*}
\Tr(\Delta_{\G_j}^{\top}V_{\G_j}\Delta_{\G_j})\\\le4\sigma^2n\log\Big(\frac{8np\det(V_{\G_j})}{\delta\det(\lambda I)}\Big) + 2\lambda\norm{\Theta_{\G_j}}_F^2.
\end{equation*}
Applying a union bound over all $j\in[p]$, we obtain that $\Prob(\CE_{\Theta})\ge1-\delta/8$.
    
Finally, recalling Eq.~\eqref{eqn:dynamics of x rewrite}, for any $j\in[p]$ and any $N_{j}\le k\le N_{j+1}-1$, we denote a sigma field $\F_{t,j}^{k}=\sigma(x_0^{t},u_{0,\G_j}^{k},\dots,x_t^{k},u_{t,\G_j}^{k})$ for all $t\in\{0,\dots,T-1\}$, where $u_{t,\G_j}^{k}=K_{\G_j}x_t^{k}+\tilde{w}_t^{k}$ with $\tilde{w}_t^{k}\overset{\text{i.i.d.}}{\sim}\CN(0,2\sigma^2\eta_0^2I)$. Note that for any $t\in[T-1]$, $z_{t,\G_j}^{k}=\begin{bmatrix}x_t^{k\top} & u_{t,\G_j}^{k\top}\end{bmatrix}^{\top}$ is conditional Gaussian given $\F_{t-1,j}^{k}$. Moreover, one can use similar arguments to those for \cite[Lemma~34]{cohen2019learning} and show that 
\begin{equation*}
\E[z_{t,\G_j}^{k}z_{t,\G_j}^{k\top}|\F_{t-1,j}^{k}]\succeq\frac{\sigma^2}{2}I,
\end{equation*}
for all $t\in[T-1]$. Now, noting from the choice of $\tau_1$ in Eq.~\eqref{eqn:tau_1} that $\tau_1T\ge200(m+n)\log\frac{96p}{\delta}$, one can apply \cite[Lemma~36]{cassel2020logarithmic} to show that for any $j\in[p]$, the following holds with probability at least $1-\delta/(8p)$:
\begin{equation*}
\sum_{t=N_{j}}^{N_{j+1}-1}\sum_{t=0}^{T-1}z_{t,\G_j}^{k}z_{t,\G_j}^{k\top}\succeq\frac{(T-1)\tau_1\sigma^2}{80}I.
\end{equation*}
Applying a union bound over all $j\in[p]$ yields that $\Prob(\CE_z)\ge1-\delta/8$. 
    
Combining the above arguments together and applying a union bound over $\CE_w$, $\CE_{\tilde{w}}$, $\CE_{\Theta}$, and $\CE_z$, we complete the proof of the lemma.\hfill\qed

\subsection*{Proof of Lemma~\ref{lemma:est error of Theta_Gj(i)}}
Consider any $j\in[p]$. First, under $\CE_{\Theta}$, we have
\begin{align}\nonumber
&\Tr(\Delta_{\G_j}^{\top}V_{\G_j}\Delta_{\G_j})\\\nonumber
\le&4\sigma^2n\log\Big(\frac{8np\det(V_{\G_j})}{\delta\det(\lambda I)}\Big) + 2\lambda\norm{\Theta_{\G_j}}_F^2\\\nonumber
\le&4\sigma^2n\log\Big(\frac{8np\det(V_{\G_j})}{\delta\det(\lambda I)}\Big) + 2\lambda\min\{n,n+m_{\G_j}\}\norm{\Theta_{\G_j}}^2\\
\le&4\sigma^2n\log\Big(\frac{8np\det(V_{\G_j})}{\delta\det(\lambda I)}\Big) + 2\lambda n\vartheta^2,\label{eqn:upper bound on trace term}
\end{align}
where $\Delta_{\G_j}=\hat{\Theta}_{\G_j}-\Theta_{\G_j}$, $m_{\G_j}=\sum_{i\in\G_j}m_i$ and $\vartheta$ is defined in \eqref{eqn:parameters}. Next, under $\CE_z$, we have
\begin{equation}
V_{\G_j}=\lambda I+\sum_{k=N_{j}}^{N_{j+1}-1}\sum_{t=0}^{T-1}z_{t,\G_j}^{k}z_{t,\G_j}^{k\top}\succeq\frac{(T-1)\tau_1\sigma^2}{80}I.\label{eqn:lower bound on V_Gj(i)}
\end{equation}
Combining \eqref{eqn:upper bound on trace term} and \eqref{eqn:lower bound on V_Gj(i)} together and rearranging terms yields
\begin{equation*}
\norm{\Delta_{\G_j}}^2\le\norm{\Delta_{\G_j}}^2_F\le\frac{80}{\tau_1\sigma^2(T-1)}\Big(4\sigma^2n\log\Big(\frac{8np\det(V_{\G_j})}{\delta\det(\lambda I)}\Big) + 2\lambda n\vartheta^2\Big).
\end{equation*}
Next, we aim to provide an upper bound on $\norm{V_{\G_j}}$. We see that
\begin{align}
\norm{V_{\G_j}}\le\lambda+\sum_{k=N_{j}}^{N_{j+1}-1}\sum_{t=0}^{T-1}\norm{z_{t,\G_j}}^2,\label{eqn:upper bound on V_Gj(i)}
\end{align}
where $z_{t,\G_j}^{k}=\begin{bmatrix}x_t^{k\top} & u_{t,\G_j}^{k\top}\end{bmatrix}^{\top}$, with $u_{t,\G_j}^{k}=K_{\G_j}x_t^{k}+\tilde{w}_t^{k}$ and $\tilde{w}_t^{k}\overset{\text{i.i.d.}}{\sim}\CN(0,2\sigma^2\eta_0^2I)$. Noting Eq.~\eqref{eqn:dynamics of x rewrite} and recalling from Assumption~\ref{ass:system parameters} that $\norm{(A+B_{\G_j}K_{\G_j})^t}\le\zeta_0\eta_0^t$ for all $t\in\R_{\ge0}$, where $0<\eta_0<1$, one can use \cite[Lemma~38]{cassel2020logarithmic} and show that
\begin{equation*}
\norm{x_t^{k}}\le\frac{\zeta_0}{1-\eta_0}\max_{t^{\prime}\in\{0,\dots,T-1\}}\norm{B_{\CS^{k}}\tilde{w}_{t^{\prime}}^{k}+w_{t^{\prime}}^{k}},
\end{equation*}
for all $k\in\tilde{\K}$ and all $t\in\{0,\dots,T\}$, where $\tilde{\K}$ is defined in~\eqref{eqn:T_tilde}. Thus, under $\CE$, we have
\begin{align}\nonumber
\norm{x_t^{k}}&\le\frac{\zeta_0}{1-\eta_0}\Big(\vartheta\eta_0\sigma\sqrt{10m\log\frac{8TN}{\delta}}+\sigma\sqrt{5n\log\frac{8TN}{\delta}}\Big)\\\nonumber
&\le\frac{\zeta_0}{1-\eta_0}(\vartheta\eta_0\sigma\sqrt{10m}+\sigma\sqrt{5n})\sqrt{\log\frac{8TN}{\delta}}\\
&\le\frac{\zeta_0}{1-\eta_0}\sqrt{20\vartheta^2\eta_0^2\sigma^2m+10\sigma^2n}\sqrt{\log\frac{8TN}{\delta}}.\label{eqn:upper bound on x}
\end{align}
Since $\norm{z_{t,\G_j}^{k}}\le\norm{x_t^{k}}+\norm{u_{t,\G_j}^{k}}\le(1+\eta_0)\norm{x_t^{k}}+\norm{\tilde{w}_t^{k}}$, one can combine the above arguments and show that under $\CE$,
\begin{align}
\norm{z_{t,\G_j}^{k}}&\le\frac{\zeta_0(1+\eta_0)}{1-\eta_0}\sqrt{20\vartheta^2\eta_0^2\sigma^2m+10\sigma^2n}\sqrt{\log\frac{8TN}{\delta}}+\eta_0\sigma\sqrt{10m\log\frac{8TN}{\delta}}\le\sqrt{z_b}.\label{eqn:upper bound on z}
\end{align}
Plugging \eqref{eqn:upper bound on z} into \eqref{eqn:upper bound on V_Gj(i)}, we obtain
\begin{align}\nonumber
\norm{V_{\G_j}}\le\lambda+\tau_1Tz_b,  
\end{align}
which implies that
\begin{align}\nonumber
\log\frac{8np\det(V_{\G_j})}{\delta\det(\lambda I)}&\le\log\Big(\frac{8pn(\lambda+\tau_1 Tz_b)}{\delta\lambda}\Big)^{m_{\G_j}+n}\\\nonumber
&=(m_{\G_j}+n)\log\Big(\frac{8np}{\delta}+\frac{8np\tau_1 Tz_b}{\lambda}\Big)\\\nonumber
&\le(m+n)\log\Big(\frac{8np}{\delta}+\frac{8nN Tz_b}{\lambda}\Big),
\end{align}
where the second inequality follows from the fact that $p\tau_1\le N$. Combining the above arguments together, one can show via the choice of $\tau_1$ in Eq.~\eqref{eqn:tau_1} and algebraic manipulations that $\norm{\Delta\G_j}^2\le\min\{\varepsilon_0^2/p,\frac{\tau_0}{\sqrt{N}}\}$, which completes the proof of the lemma.\hfill\qed

\subsection*{Proof of Lemma~\ref{lemma:upper bound on R_e^1}}
First, from the definition of Algorithm~\ref{alg:online for AS}, we know that $R_e^1$ satisfies that 
\begin{align}\nonumber
R_e^1= \sum_{k\in\tilde{\K}}J_k(\CS^k,u_{\CS^k}^{k})-\sum_{k\in\tilde{\K}}J_k(\CS_{\star}^k)
\le\sum_{k\in\tilde{\K}}J_k(\CS^k,u_{\CS^k}^{k}).
\end{align}
Considering any $k\in\tilde{\K}$ and noting lines~5-7 in Algorithm~\ref{alg:online for AS}, we have from Eqs.~\eqref{eqn:episode t cost} and~\eqref{eqn:dynamics of x rewrite} that 
\begin{equation*}
J_k(\CS^k,u^{k}_{\CS^k})=\Big(\sum_{t=0}^{T-1}x_t^{k\top}Q^{k} x^{k}_t+u_{t,\CS^k}^{k\top}R_{\CS^k}^{k}u^{k}_{t,\CS^k}\Big)+x_T^{k\top}Q_f^{k} x_T^{k}.
\end{equation*}
Thus, we have 
\begin{equation*}
R_e^1\le\max\{\sigma_Q,\sigma_R\}\bigg(\Big(\sum_{k\in\tilde{\K}}\sum_{t=0}^{T-1}x_t^{k\top}x_t^{k}\\+u_{t,\CS^k}^{k\top}u_{t,\CS^k}^{k}\Big)+x_T^{k\top}x_T^{k}\bigg).
\end{equation*}
Recall that $u_{t,\G_j}^{k}=K_{\G_j}x_t^{k}+\tilde{w}_t^{k}$ for all $t\in\{0,\dots,T-1\}$, where $\tilde{w}_t^{k}\overset{\text{i.i.d.}}{\sim}\CN(0,2\sigma^2\eta_0^2I)$. It follows that under the event $\CE$ defined in Eq.~\eqref{eqn:event E},
\begin{align}\nonumber
\norm{u_{t,\G_j}^{k}}\le\eta_0\norm{x_t^{k}}+\sqrt{10m\log\frac{8TN}{\delta}},
\end{align}
which implies that 
\begin{align}\nonumber
\norm{u_{t,\G_j}^{k}}^2\le2\eta_0^2\norm{x_t^{k}}^2+20m\log\frac{8TN}{\delta}.
\end{align}
Moreover, we see from \eqref{eqn:T_tilde} and the definition of Algorithm~\ref{alg:online for AS} that $|\tilde{\K}|=\tau_1p$. Now, recalling the upper bound on $\norm{x_t^{k}}$ for all $k\in\tilde{\K}$ and all $t\in\{0,\dots,T\}$ given by~\eqref{eqn:upper bound on x} in the proof of Lemma~\ref{lemma:est error of Theta_Gj(i)}, one can show that under $\CE$, 
\begin{equation}
R_e^1\le\max\{\sigma_Q,\sigma_R\}\frac{\tau_1p(2\eta_0^2+3)\zeta_0^2T}{(1-\eta_0)^2}\big(20\vartheta^2\eta_0^2\sigma^2m+10\sigma^2n\big)\log\frac{8TN}{\delta}.
\end{equation}
\hfill\qed

\subsection*{Proof of Lemma~\ref{lemma:upper bound on R_e^2}}
Consider any episode $k\in\K$ in Algorithm~\ref{alg:online for AS}. Noting that $x_0^k=0$, one can show that the state of system~\eqref{eqn:LTI with B_i} corresponding to $\CS^k$ selected in line~12 of Algorithm~\ref{alg:online for AS} satisfies
\begin{align}\nonumber
x_{t+1}^{k} = \sum_{i=0}^{t}\hat{\Psi}_{t,i}^{k}(\CS^k)w_{i}^{k},
\end{align}
where $\hat{\Psi}_{t,i}^{k}(\CS^k)$ is defined in Eq.~\eqref{eqn:Psi_hat}. Moreover, supposing that the event $\CE$ holds, we know from Lemma~\ref{lemma:est error of Theta_Gj(i)} that $\norm{\hat{\Theta}_{\G_j}-\Theta_{\G_j}}\le\varepsilon_0/\sqrt{p}$ for all $j\in[p]$. It follows that $\hat{A}$ and $\hat{B}$ obtained in line~9 of Algorithm~\ref{alg:online for AS} satisfy that $\norm{\hat{A}-A}\le\varepsilon_0$ and $\norm{\hat{B}-B}\le\varepsilon_0$, which also implies that $\norm{\hat{B}_{\CS^k}-B_{\CS^k}}\le\varepsilon_0$, where $\hat{B}_{\CS^k}$ contains the columns of $\hat{B}$ that correspond to $\CS^k$. Now, one can obtain from the choice of $\varepsilon_0$ in \eqref{eqn:parameters} and Proposition~\ref{prop:bound on est error of P_hat} that 
\begin{equation*}
\norm{\hat{K}_{t,\CS^k}^{k}-K_{t,\CS^k}^{k}}\le\frac{1-\eta_{\CS^k}}{2\norm{B_{\CS^k}}\zeta_{\CS^k}},\ \forall t\in\{0,\dots,T-1\},
\end{equation*}
which also implies that 
\begin{equation*}
\norm{\hat{K}_{t,\CS^k}^{k}}\le\kappa,\ \forall t\in\{0,\dots,T-1\},
\end{equation*}
where $\hat{K}_{t,\CS^k}^{k}$ and $K_{t,\CS^k}^{k}$ are given by Eqs.~\eqref{eqn:control gain S_t est} and \eqref{eqn:control gain}, respectively. We have from Lemma~\ref{lemma:bound on Psi_hat} that
\begin{align*}
\norm{\hat{\Psi}_{t_2,t_1}^{k}(\CS^k)}&\le\zeta_{\CS^k}(\frac{1+\eta_{\CS^k}}{2})^{t_2-t_1},
\end{align*}
for all $t_1,t_2\in\{0,\dots,T-1\}$ with $t_2\ge t_1$, where we know from Lemma~\ref{lemma:bound on norm of Phi} that $0<(1+\eta_{\CS^k})/2<1$. One can now use similar arguments to those for \cite[Lemma~38]{cassel2020logarithmic} and show that
\begin{equation*}
\norm{x_t^{k}}\le\frac{2\zeta_{\CS^k}}{1-\eta_{\CS^k}}\max_{k^{\prime}\in\K,t^{\prime}\in\{0,\dots,T-1\}}\norm{w_{t^{\prime}}^{k^{\prime}}}.
\end{equation*}
Thus, under the event $\CE$ defined in Eq.~\eqref{eqn:event E}, we have that 
\begin{align}\nonumber
\norm{x_t^{k}}\le\frac{2\zeta\sigma}{1-\eta}\sqrt{5n\log\frac{8TN}{\delta}},
\end{align}
for all $k\in\K$ and all $t\in\{0,\dots,T\}$. Furthermore, we recall from Eq.~\eqref{eqn:episode t cost} that 
\begin{align}\nonumber
&J_k(\CS^k,u_{\CS}^{k})\\\nonumber
=&\Big(\sum_{t=0}^{T-1}x_t^{k\top}Q^{k} x^{k}_t+u_{t,\CS^k}^{k\top}R_{\CS^k}^{k}u^{k}_{t,\CS^k}\Big)+x_T^{k\top}Q_f^{k} x_T^{k}\\\nonumber
=&\Big(\sum_{t=0}^{T-1}x_t^{k\top}(Q^{k}+\hat{K}_{t,\CS}^{k\top}R_{\CS^k}^{k}\hat{K}_{t,\CS^k}^{k})x_t^{k}\Big)+x_T^{k\top}Q_f^{k}x_T^{k},
\end{align}
where we use the fact that $u_{t,\CS^k}^{k}=\hat{K}_{t,\CS^k}^{k}x_t^{k}$. It follows from our above arguments that under the event $\CE$,
\begin{align}
J_k(\CS^k,u_{\CS^k}^{k})\le T(2\sigma_Q+\kappa^2\sigma_R)\frac{4\zeta^2\sigma^2}{(1-\eta)^2}5n\log\frac{8TN}{\delta}\le\bar{y}_b,\label{eqn:upper bound on cost under event E}
\end{align}

To proceed, recall that we use the {\bf Exp3.S} algorithm in Algorithm~\ref{alg:online for AS} to select $\CS^k$ for all $k\in \K$. As we argued in Section~\ref{sec:algorithm design for episodic setting}, each action in the {\bf Exp3.S} algorithm corresponds to a set $\CS\subseteq\G$ with $|\G|=H$, i.e., the set of all possible actions $\Q$ in the {\bf Exp3.S} algorithm is given by $\Q=\{\CS\subseteq\G:|\CS|=H\}$. Moreover, the cost of the action corresponding to $\CS^k$ in episode $k\in[N]$ is given by $J_k(\CS^k,u_{\CS^k}^{k})$. Thus, we can replace $y_b$ (resp., $y_a$) in \eqref{eqn:regret for Exp3} with $\bar{y}_b$ (resp., $0$), and obtain that under the event $\CE$,
\begin{align}\nonumber
R_e^2&=\E_{\A_e}\Big[\sum_{k\in\K}J_k(\CS^k,u_{\CS^k}^{k})\Big]-\sum_{k\in\K}J_k(\CS^k_{\star},u_{\CS^k_{\star}}^{k})\\\nonumber
&\le \bar{y}_b2\sqrt{e-1}\sqrt{|\Q|N(h(\CS_{\star})\log(|\Q|N)+e)}.
\end{align}
\hfill\qed

\subsection*{Proof of Lemma~\ref{lemma:upper bound on R_e^3}}
First, consider any $k\in\K$. Similarly to our arguments in the proof of Lemma~\ref{lemma:upper bound on R_e^2}, we see that $J_k(\CS_{\star}^k,u_{\CS_{\star}^k}^{k})$ is given by
\begin{equation*}
J_k(\CS^k_{\star},u_{\CS^k_{\star}}^{k})=\Big(\sum_{t=0}^{T-1}x_t^{k\top}(Q^{k}+\hat{K}_{t,\CS^k_{\star}}^{k\top}R_{\CS^k_{\star}}^{k}\hat{K}_{t,\CS^k_{\star}}^{k})x_t^{k}\Big)\\+x_T^{k\top}Q_f^{k}x_T^{k},
\end{equation*}
where $x_0^{k}=0$. Applying Eqs.~\eqref{eqn:LTI with B_i} and \eqref{eqn:recursion for P_k(S_t) tilde}, one can show that
\begin{align}\nonumber
&J_k(\CS^k_{\star},u_{\CS^k_{\star}}^{k})\\\nonumber
=&\Big(\sum_{t=0}^{T-1}x_t^{k\top}\tilde{P}_{t,\CS^k_{\star}}^{k}x_t^{k}-(x_{t+1}^{k}-w_t^{k})^{\top}\tilde{P}_{t+1,\CS^k_{\star}}^{k}(x_{t+1}^{k}-w_t^{k})\Big)+x_T^{k\top}Q_f^{k}x_T^{k}\\\nonumber
=&\Big(\sum_{t=0}^{T-1}x_t^{k\top}\tilde{P}_{t,\CS^k_{\star}}^{k}x_t^{k}-x_{t+1}^{k\top}\tilde{P}_{t+1,\CS^k_{\star}}^{k}x_{t+1}^{k}+2w_t^{k\top}\tilde{P}_{t+1,\CS^k_{\star}}^{k}(A+B_{\CS^k_{\star}})x_t^{k}+w_t^{k\top}\tilde{P}_{t+1,\CS^k_{\star}}^{k}w_t^{k}\Big)+x_T^{k\top}Q_f^{k}x_T^{k}\\\nonumber
=&\Big(\sum_{t=0}^{T-1}2w_t^{k\top}\tilde{P}_{t+1,\CS^k_{\star}}^{k}(A+B_{\CS^k_{\star}})x_t^{k}+w_t^{k\top}\tilde{P}_{t+1,\CS^k_{\star}}^{k}w_t^{k}\Big),
\end{align}
where we note that $\tilde{P}_{T,\CS^k_{\star}}^{k}=Q_f^{k}$. Recalling the definition of $R_e^3$, we see that 
\begin{align}\nonumber
R_e^3&=\Big(\sum_{k\in\K}\big(\sum_{t=0}^{T-1}2w_t^{k\top}\tilde{P}_{t+1,\CS^k_{\star}}^{k}(A+B_{\CS^k_{\star}})x_t^{k}w_t^{k\top}\tilde{P}_{t+1,\CS^k_{\star}}^{k}w_t^{k}\big)-\hat{J}_k(\CS^k_{\star})\Big)\\
&=\sum_{k\in\K}\big(\sum_{t=0}^{T-1}2w_t^{k\top}\tilde{P}_{t+1,\CS^k_{\star}}^{k}(A+B_{\CS^k_{\star}})x_t^{k}+w_t^{k\top}\tilde{P}_{t+1,\CS^k_{\star}}^{k}w_t^{k}-\sigma^2\Tr(\tilde{P}_{t+1,\CS^k_{\star}}^{k})\big).\label{eqn:decompose of R_3}
\end{align}
Now, for any $k\in\K$, one can apply Eq.~\eqref{eqn:recursion for P_k(S_t) tilde} recursively to show that 
\begin{align*}
&\tilde{P}_{t,\CS^k_{\star}}^{k}=\Big(\sum_{i=t}^{T-1}\hat{\Psi}_{i,t}^{k\top}(\CS^k_{\star})(Q^{k}+\hat{K}_{i,\CS^k_{\star}}^{k\top}R_{\CS^k_{\star}}^{k}\hat{K}_{t,\CS^k_{\star}}^{k})\hat{\Psi}_{i,t}^{k}(\CS^k_{\star})\Big)+\hat{\Psi}_{N,t}^{k\top}(\CS^k_{\star})\tilde{P}_{T,\CS^k_{\star}}^{k}(\CS^k_{\star})\hat{\Psi}_{T,t}^{k}(\CS^k_{\star}),
\end{align*}
for all $t\in\{0,\dots,T-1\}$, where $\hat{\Psi}_{i,t}^{k}(\CS^k_{\star})$ is defined in Eq.~\eqref{eqn:Psi_hat}. Next, suppose that the event $\CE$ defined in Eq.~\eqref{eqn:event E} holds. Similarly to our arguments in the proof of Lemma~\ref{lemma:upper bound on R_e^2}, we know that with the choice of $\varepsilon_0$ in \eqref{eqn:parameters}, 
\begin{equation*}
\norm{\hat{K}_{t,\CS^k_{\star}}^{k}}\le\kappa,\ \forall t\{0,\dots,T-1\},
\end{equation*}
where $\kappa$ is defined in \eqref{eqn:parameters}, and $\hat{K}_{t,\CS^{k}_{\star}}^{k}$ is given by Eq.~\eqref{eqn:control gain S_t est}. We also have from Lemma~\ref{lemma:bound on Psi_hat} that
\begin{align*}
\norm{\hat{\Psi}_{t_2,t_1}^{k}(\CS^k_{\star})}&\le\zeta_{\CS^k_{\star}}(\frac{1+\eta_{\CS^k_{\star}}}{2})^{t_2-t_1},
\end{align*}
for all $t_1,t_2\in\{0,\dots,T-1\}$ with $t_2\ge t_1$, where $\zeta_{\CS^k_{\star}},\eta_{\CS^k_{\star}}$ are given by Lemma~\ref{lemma:bound on norm of Phi} with $0<(1+\eta_{\CS^k_{\star}})/2<1$. For any $t\in\{0,\dots,T-1\}$, one can then show that 
\begin{align}\nonumber
\norm{\tilde{P}_{t,\CS^k_{\star}}^{k}}&\le(\sigma_Q+\sigma_R\kappa^2)\zeta_{\CS^k_{\star}}^2\sum_{i=0}^{T-t}(\frac{1+\eta_{\CS^k_{\star}}}{2})^{2i}\\\nonumber
&\le(\sigma_Q+\sigma_R\kappa^2)\frac{4\zeta_{\CS^k_{\star}}^2}{1-\eta_{\CS^k_{\star}}^2}\le(\sigma_Q+\sigma_R\kappa^2)\frac{4\zeta^2}{1-\eta^2}.
\end{align} 
Furthermore, we recall from our arguments in the proof of Lemma~\ref{lemma:upper bound on R_e^2} that under the event $\CE$ defined in Eq.~\eqref{eqn:event E}, 
\begin{align}\nonumber
\norm{x_t^{k}}\le\frac{2\zeta\sigma}{1-\eta}\sqrt{5n\log\frac{8TN}{\delta}},
\end{align}
for all $T\in\{0,\dots,T-1\}$. 

To proceed, let us denote 
\begin{equation*}
V_{t,\CS^k_{\star}}^{k}=\tilde{P}_{t+1,\CS^k_{\star}}^{k}(A+B_{\CS^k_{\star}})x_t^{k}.
\end{equation*}
From our arguments above, we see that under $\CE$,
\begin{align}\nonumber
\norm{V_{t,\CS^k_{\star}}^{k}}&\le\norm{\tilde{P}_{t+1,\CS^k_{\star}}^{k}}\norm{A+B_{\CS^k_{\star}}}\norm{x_t^{k}}\\\nonumber
&\le\frac{16\sigma(\sigma_Q+\sigma_R\kappa^2)\vartheta\zeta^3}{(1-\eta^2)(1-\eta)}\sqrt{5n\log\frac{8TN}{\delta}},
\end{align}
for all $t\in\{0,\dots,T-1\}$ and all $k\in\K$, where $\vartheta$ is defined in \eqref{eqn:parameters}, which implies that 
\begin{equation*}
\sum_{k\in\K}\sum_{t=0}^{T-1}\norm{V_{t,\CS^k_{\star}}^{k}}^2\le TN\frac{256\sigma^2(\sigma_Q+\sigma_R\kappa^2)^2\vartheta^2\zeta^6}{(1-\eta^2)^2(1-\eta)^2} 5n\log\frac{8TN}{\delta}.
\end{equation*}
Noting from Assumption~\ref{ass:noise process} that $w_t^{k}\overset{\text{i.i.d.}}{\sim}\CN(0,\sigma^2I)$ for all $t\in\{0,\dots,T-1\}$ and for all $k\in[N]$, one can now apply \cite[Lemma~30]{cohen2019learning} and obtain that under the event $\CE$, the following holds with probability at least $1-\delta/4$:
\begin{equation}
2\sum_{k\in\K}\sum_{t=0}^{T-1}w_t^{k\top}V_{t,\CS^k_{\star}}^{k}\le64\sigma\sqrt{TN}\frac{\sigma(\sigma_Q+\sigma_R\kappa^2)\vartheta\zeta^3}{(1-\eta^2)(1-\eta)}\\\times\sqrt{5n\log\frac{8TN}{\delta}}.\label{eqn:R_3,1}
\end{equation}

Moreover, based on our arguments above, one can apply \cite[Lemma~31]{cohen2019learning} and obtain that under the event $\CE$, the following holds with probability at least $1-\delta/4$:
\begin{equation}
\sum_{k\in\K}\big(\sum_{t=1}^{T-1}w_t^{k\top}\tilde{P}_{t+1,\CS^k_{\star}}^{k}w_t^{k}\big)-\sigma^2\Tr(\tilde{P}_{t+1,\CS^k_{\star}}^{k})\big)\le8(\sigma_Q+\sigma_R\kappa^2)\frac{4\zeta^2}{1-\eta^2}\sigma^2\sqrt{TN(\log\frac{16TN}{\delta})^3}.\label{eqn:R_3,2}
\end{equation}
Recalling the decomposition of $R_e^3$ in \eqref{eqn:decompose of R_3}, we can apply~\eqref{eqn:R_3,1}-\eqref{eqn:R_3,2} together with a union bound, and obtain an upper bound on $R_e^3$ that holds with probability at least $1-\delta/2$ under $\CE$.\hfill\qed

\subsection*{Proof of Lemma~\ref{lemma:upper bound on R_e^4}}
As we argued in the proof of Lemma~\ref{lemma:upper bound on R_e^2}, under the event $\CE$ defined in Eq.~\eqref{eqn:event E}, $\hat{A}$ and $\hat{B}$ obtained in line~9 of Algorithm~\ref{alg:online for AS} satisfy that $\norm{\hat{A}-A}\le\sqrt{\frac{\tau_0}{\sqrt{N}}}$ and $\norm{\hat{B}-B}\le\sqrt{\frac{\tau_0}{\sqrt{N}}}$, which also implies that $\norm{\hat{B}_{\CS}-B_{\CS}}\le\sqrt{\frac{\tau_0}{\sqrt{N}}}$ for all $\CS\subseteq\G$ with $|\CS|=H$. Under the event $\CE$, one can then show via the choice of $\varepsilon_0$ in \eqref{eqn:parameters} and Proposition~\ref{prop:bound on J_hat minus J} that for any $k\in\K$,
\begin{align}\nonumber
&\hat{J}_k(\CS^k_{\star})-J_k(\CS^k_{\star})\\\nonumber
&\le \frac{4\max_{\CS\subseteq\G,|\CS|=H}\{n,m_{\CS}\}T\zeta^2\tau_0}{(1-\eta^2)\sqrt{N}}\sigma(\sigma_R+\Gamma^3)\big(3\tilde{\Gamma}^6(20\tilde{\Gamma}\sigma_R)^{\ell-1}32\ell^{\frac{5}{2}}\tilde{\beta}^{2(\ell-1)}(1+\nu^{-1})\max\{\sigma_Q,\sigma_R\}\big)^2.
\end{align}
Recalling from \eqref{eqn:T} that $|\K|=N-\tau_1p$, we complete the proof of the lemma.\hfill\qed

\section{Proofs pertaining to Theorem~\ref{thm:regret of as alg non-episodic}}\label{app:regret bound proofs non-episodic}
\subsection*{Proof of Lemma~\ref{lemma:est error of Theta_Gj(i) non-episodic}}
First, recall from Assumption~\ref{ass:system parameters} that for any $j\in[p]$, $\norm{(A+B_{\G_j}K_{\G_j})^t}\le\zeta_0\eta_0^t$ for all $t\in\R_{\ge0}$, where $0<\eta_0<1$. From the definition of Algorithm~\ref{alg:online for AS non-episodic}, we see that $\CS_{(k-1){T^{\prime}}}=\cdots=\CS_{kT^{\prime}-1}=\G_j$ for all $k\in\{N^{\prime}_j,\dots,N^{\prime}_{j+1}-1\}$. Since $T^{\prime}>\frac{2}{1-\eta}\log2\zeta$ (from the hypothesis in Theorem~\ref{thm:regret of as alg non-episodic}), and $x_0^k=x_{T^{\prime}}^{k-1}$ with $x_0^1=0$, one can now apply \cite[Lemma~39]{cassel2020logarithmic} and show that 
\begin{equation*}
\norm{x_t^k}\le\frac{3\zeta_0^2}{1-\eta_0}\max_{k^{\prime}\in\tilde{\K},t^{\prime}\in\{0,\dots,T^{\prime}-1\}}\norm{B_{\CS^{k^{\prime}}}\tilde{w}_{t^{\prime}}^{k^{\prime}}+w_{t^{\prime}}^{k^{\prime}}},
\end{equation*}
for all $k\in\tilde{\K}^{\prime}$ and all $t\in\{0,\dots,T^{\prime}\}$. Using similar arguments to those in the proof of Lemma~\ref{lemma:est error of Theta_Gj(i)}, we have that under the event $\CE^{\prime}$, 
\begin{equation*}
\label{eqn:upper bound on state norm 1}
\norm{x_t^k}\le \frac{3\zeta^2_0}{1-\eta_0}\sqrt{20\vartheta^2\eta_0^2\sigma^2m+10\sigma^2n}\sqrt{\log\frac{8T}{\delta}},
\end{equation*}
for all $k\in\tilde{\K}^{\prime}$ and all $t\in\{0,\dots,T^{\prime}\}$. Noting the choice of $\tau_1^{\prime}$, the proof now follows that of Lemma~\ref{lemma:est error of Theta_Gj(i)}.\hfill\qed

\subsection*{Proof of Lemma~\ref{lemma:norm bound on state}}
Recall from Assumption~\ref{ass:system parameters} that for any $j\in[p]$, $\norm{(A+B_{\G_j}K_{\G_j})^t}\le\zeta_0\eta_0^t\le\zeta\eta^t$ for all $t\in\R_{\ge0}$, where $\zeta,\eta$ are defined in \eqref{eqn:parameters} with $0<\eta<1$. Moreover, by the choice of $\varepsilon_0$ given in \eqref{eqn:parameters}, we have from our arguments in the proof of Lemma~\ref{lemma:upper bound on R_e^2} that 
\begin{align*}
\norm{\hat{\Psi}_{t_2,t_1}^{k}(\CS^k)}&\le\zeta_{\CS^k}(\frac{1+\eta_{\CS^k}}{2})^{t_2-t_1}\le\zeta(\frac{1+\eta}{2})^{t_2-t_1},
\end{align*}
for all $k\in\K^{\prime}$ and all $t_1,t_2\in\{0,\dots,T^{\prime}\}$ with $t_2\ge t_1$. Similarly to the proof of Lemma~\ref{lemma:est error of Theta_Gj(i) non-episodic}, since $T^{\prime}>\frac{2}{1-\eta}\log2\zeta$ and $x_0^k=x_{T^{\prime}}^{k-1}$ with $x_0^1=0$, one can apply \cite[Lemma~39]{cassel2020logarithmic} and show that 
\begin{equation*}
\norm{x_t^k}\le\frac{6\zeta^2}{1-\eta}\max\Big\{\max_{k^{\prime}\in\tilde{\K}^{\prime},t^{\prime}\in\{0,\dots,T^{\prime}-1\}}\norm{B_{\CS^{k^{\prime}}}\tilde{w}_{t^{\prime}}^{k^{\prime}}+w_{t^{\prime}}^{k^{\prime}}},\max_{k^{\prime}\in\K^{\prime},t^{\prime}\in\{0,\dots,T^{\prime}-1\}}\norm{ w_{t^{\prime}}^{k^{\prime}}}\Big\},
\end{equation*}
for all $k\in[N^{\prime}]$ and all $t\in\{0,\dots,T^{\prime}\}$.
It then follows that under the event $\CE^{\prime}$, $\norm{x_t^k}\le x_b$ for all $k\in[N^{\prime}]$ and all $t\in\{0,\dots,T^{\prime}\}$.\hfill\qed

\subsection*{Proof of Lemma~\ref{lemma:almost episodic}}
Let $x_t^k$ (resp., $\tilde{x}_t^k$) denote the state of system~\eqref{eqn:LTI} when the sequence of actuators $\CS^1,\dots,\CS^{k_0}$ (resp., $\tilde{\CS}^1,\dots,\tilde{\CS}^{k_0}$) is selected and the control $u_{t,\CS^k}^k$ (resp., $u_{t,\tilde{\CS}^k}^k$) is applied for any $k\in[k_0]$ and any $t\in\{0,\dots,T^{\prime}-1\}$. We see from Lemma~\ref{lemma:norm bound on state} that $\norm{x_{t}^{k}}\le x_b$ and $\norm{\tilde{x}_t^{k}}\le x_b$ for any $k\in[k_0]$ and any $t\in\{0,\dots,T^{\prime}-1\}$. Now, let us consider any $t\in\{(k_0-1)T^{\prime}+T_m,\dots,k_0T^{\prime}-1\}$ with $t<T-1$. Using Eq.~\eqref{eqn:LTI with B_i} and noting the definitions of $\CS^1,\dots,\CS^{N^{\prime}}$ and $\tilde{\CS}^1,\dots,\tilde{\CS}^{N^{\prime}}$, one can then show that 
\begin{align}\nonumber
x_{t} &= \hat{\Psi}_{t,0}^k(\CS^{k_0})x_0^{k_0}+ \sum_{i=0}^{t-1}\hat{\Psi}_{t-1,i}^{k_0}(\CS^{k_0})w_{i}^{k_0},\\\nonumber
\tilde{x}_{t} &= \hat{\Psi}_{t,0}^{k_0}(\CS^{k_0})\tilde{x}_0^{k_0}+ \sum_{i=0}^{t-1}\hat{\Psi}_{t-1,i}^{k_0}(\CS^k)w_{i}^{k_0},
\end{align}
where $\hat{\Psi}_{t-1,i}^{k_0}(\CS^{k_0})$ is given by Eq.~\eqref{eqn:Psi_hat}. It follows that 
\begin{align}\nonumber
\norm{x_t-\tilde{x}_t}&\le\norm{\hat{\Psi}_{t,0}^{k_0}(\CS^{k_0})}\norm{x_0^{k_0}-\tilde{x}_0^{k_0}}\\
&\le 2\zeta(\frac{1+\eta}{2})^{T_m}x_b\le2x_bT^{-1/3},\label{eqn:upper bound on state diff}
\end{align}
where the second inequality follows from Lemma~\ref{lemma:bound on Psi_hat} with $\zeta,\eta$ defined in \eqref{eqn:parameters} and $0<\eta<1$. To obtain the third inequality in \eqref{eqn:upper bound on state diff}, we first note that $1-\frac{1+\eta}{2}<-\log\frac{1+\eta}{2}$, and then use
\begin{align*}
&T_m=\frac{2}{1-\eta}(\frac{1}{3}\log T+\log\zeta)\ge\frac{\log T^{-1/3}-\log\zeta}{\log\frac{1+\eta}{2}},\\
\Rightarrow& \zeta(\frac{1+\eta}{2})^{T_m}\le T^{-1/3}.
\end{align*}
To proceed, we have from Eq.~\eqref{eqn:cost per time step} that 
\begin{align*}
c_t(\CS_{0:t},u_{\CS_{0:t}})=x_t^{\top}(Q+\hat{K}_{t_0,\CS^{k_0}}^{k_0\top}R_{\CS^{k_0}}\hat{K}_{t_0,\CS^{k_0}}^{k_0})x_t,\\
c_t(\tilde{\CS}_{0:t},u_{\tilde{\CS}_{0:t}})=\tilde{x}_t^{\top}(Q+\hat{K}_{t_0,\CS^{k_0}}^{k_0\top}R_{\CS^{k_0}}\hat{K}_{t_0,\CS^{k_0}}^{k_0})\tilde{x}_t,
\end{align*}
where $t_0=t-(k_0-1)T^{\prime}$ and $\norm{\hat{K}_{t_0,\CS^{k_0}}^{k_0}}\le\kappa$ as we argued in the proof of Lemma~\ref{lemma:upper bound on R_e^2}. Thus, we have
\begin{align*}
&|c_t(\CS_{0:t},u_{\CS_{0:t}})-c_t(\tilde{\CS}_{0:t},u_{\tilde{\CS}_{0:t}})|\\
\le& 2\max\{\norm{x_t},\norm{\tilde{x}_t}\}\norm{Q+\hat{K}_{t_0,\CS^{k_0}}^{k_0\top}R\hat{K}_{t_0,\CS^{k_0}}^{k_0}}\norm{x_t-\tilde{x}_t}\\
\le&4(\sigma_Q+\kappa^2\sigma_R)x_b^2T^{-1/3},
\end{align*}
where we use the fact that $c_t(\cdot)$ is Lipschitz continuous with respect to $x_t,\tilde{x}_t$.

Next, let us consider $t=T-1$. Recalling Eq.~\eqref{eqn:final time step cost} and following similar arguments to those above, one can show that 
\begin{align*}
&|c_t(\CS_{0:t},u_{\CS_{0:t}})-c_t(\tilde{\CS}_{0:t},u_{\tilde{\CS}_{0:t}})|\\
\le& 2\max\{\norm{x_t},\norm{\tilde{x}_t}\}\norm{Q+\hat{K}_{t_0,\CS^{k_0}}^{k_0\top}R_{\CS^{k_0}}\hat{K}_{t_0,\CS^{k_0}}^{k_0}}\norm{x_t-\tilde{x}_t}+2\max\{\norm{x_{t+1}},\norm{\tilde e{x}_{t+1}}\}\norm{Q_f}\norm{x_{t+1}-\tilde{x}_{t+1}}\\
\le&4(2\sigma_Q+\kappa^2\sigma_R)x_b^2T^{-1/3},
\end{align*}
completing the proof of the lemma.\hfill\qed

\subsection*{Proof of Lemma~\ref{lemma:upper bound on R_1^c}}
The proof follows the proof of Lemma~\ref{lemma:upper bound on R_e^1} by noting the upper bound $\norm{x_t^k}\le x_b$ for all $k\in[N^{\prime}]$ and all $t\in\{0,\dots,T^{\prime}\}$ provided in Lemma~\ref{lemma:norm bound on state}.\hfill\qed

\subsection*{Proof of Lemma~\ref{lemma:upper bound on R_c^2}}
Suppose the event $\CE^{\prime}$ holds. First, following our arguments in the proof of Lemma~\ref{lemma:almost episodic} and recalling Eqs.~\eqref{eqn:cost per time step}-\eqref{eqn:final time step cost}, one can show that 
\begin{equation}
0\le c(\CS_{0:t},u_{\CS_{0:t}})\le(2\sigma_Q+\kappa^2\sigma_R)x_b^2\le \bar{y}_b^{\prime},
\end{equation}
for any $t\ge(k-1)T^{\prime}$ with $k\in\K^{\prime}$, and any $\CS_{0:t}$ with the control $u_{\CS_{0:t}}$ described in Lemma~\ref{lemma:norm bound on state}. Now, one can apply the proof techniques for \cite[Theorem~2\&4]{arora2012online} together with Lemma~\ref{lemma:almost episodic}, and show that
\begin{align*}
R_c^2&\le (2\sigma_Q+\kappa^2\sigma_R)x_b^2\big(T^{\prime}R(\frac{T}{T^{\prime}})+\frac{TT_m}{T^{\prime}}\big)+8(\sigma_Q+\kappa^2\sigma_R)x_b^2T^{-1/3}T\\
&\le8(\sigma_Q+\kappa^2\sigma_R)x_b^2T^{2/3}+(2\sigma_Q+\kappa^2\sigma_R)x_b^2(T_m+1)\big(4(e-1)(h(\CS_{\star})\ln(|\Q|T)+e)|\Q|\big)^{1/3}T^{2/3},
\end{align*}
where $R(\frac{T}{T^{\prime}})\triangleq2\sqrt{e-1}\sqrt{|\Q|\frac{T}{T^{\prime}}(h(\CS_{\star})\ln(|\Q|\frac{T}{T^{\prime}})+e)}$ and $(2\sigma_Q+\eta_0^2\sigma_R)x_b^2R(\frac{T}{T^{\prime}})$ is the regret of the {\bf Exp3.S} subroutine in Algorithm~\ref{alg:online for AS large scale}.\hfill\qed

\subsection*{Proof of Lemma~\ref{lemma:upper bound on R_c^3}}
First, we note from Eq.~\eqref{eqn:exp for J_t hat} that
\begin{align*}
\hat{J}_k(\CS_{\star}^k)&=\E\big[x_0^{k\top}\tilde{P}_{0,\CS_{\star}^k}^kx_0^k\big]+\sigma^2\sum_{t=0}^{T^{\prime}-1}\Tr(\tilde{P}_{t+1,\CS_{\star}^k}^k)\\
&\ge\sum_{t=0}^{T^{\prime}-1}\Tr(\tilde{P}_{t+1,\CS_{\star}^k}^kW),
\end{align*}
where $\tilde{P}^k_{t,\CS_{\star}^k}$ is given by Eq.~\eqref{eqn:recursion for P_k(S_t) tilde}. Next, following the arguments in the proof of Lemma~\ref{lemma:upper bound on R_e^3} and invoking Lemma~\ref{lemma:norm bound on state}, one can show that \eqref{eqn:upper bound on R_c^3} holds with probability at least $1-\delta/2$ under $\CE^{\prime}$.\hfill\qed

\subsection*{Proof of Lemma~\ref{lemma:upper bound on R_c^4}}
First, observe that $J(\CS_{\star})$ defined in \eqref{eqn:opt LQR cost S_t} can be written into the episodic form $J_k(\CS_{\star}^k)$ based on the episodic instance of Problem~\eqref{eqn:LQR obj} that we constructed. In particular, we have from \eqref{eqn:opt LQR cost S_t} that
\begin{equation*}
J(\CS^{\star})=\sum_{k\in[N^{\prime}]}J_k(\CS_{\star}^k)\ge\sum_{k\in\K^{\prime}}J_k(\CS_{\star}^k).
\end{equation*}
We then have
\begin{align*}
R_c^4&\le \sum_{k\in\K^{\prime}}\big(\hat{J}_k(\CS_{\star}^k)-J_k(\CS_{\star}^k)\big)\le\sum_{k\in\K^{\prime}}\Big(\E\big[x_0^{k\top}\tilde{P}_{0,\CS_{\star}^k}^kx_0^k\big]+\sigma^2\sum_{t=0}^{T^{\prime}-1}\Tr(\tilde{P}_{t+1,\CS_{\star}^k}^k)-\sigma^2\sum_{t=0}^{T^{\prime}-1}\Tr(P_{t+1,\CS_{\star}^k}^k)\Big),
\end{align*}
where $\tilde{P}_{t,\CS_{\star}^k}^k$ (resp., $P_{t,\CS_{\star}^k}^k$) is given by Eq.~\eqref{eqn:recursion for P_k(S_t) tilde} (resp., Eq.~\eqref{eqn:recursion for P_k}). Note that under the event $\CE^{\prime}$, we have from Lemma~\ref{lemma:norm bound on state} that $\norm{x_0^k}\le x_b$ for all $k\in\K^{\prime}$. We have also shown in the proof of Lemma~\ref{lemma:upper bound on R_e^3} that $\norm{\tilde{P}_{0,\CS_{\star}^k}^k}\le(\sigma_Q+\sigma_R\kappa^2)\frac{4\zeta^2}{1-\eta^2}$. It follows that under the event $\CE^{\prime}$,
\begin{equation*}
\E\big[x_0^{k\top}\tilde{P}_{0,\CS_{\star}^k}^kx_0^k\big]\le(\sigma_Q+\sigma_R\kappa^2)\frac{4\zeta^2x_b^2}{1-\eta^2}.
\end{equation*}
Moreover, following similar arguments to those in the proof of Lemma~\ref{lemma:upper bound on R_e^4}, one can show that
\begin{align}\nonumber
&\sigma^2\sum_{k\in\K^{\prime}}\sum_{t=0}^{T^{\prime}}\big(\Tr(\tilde{P}_{t+1,\CS_{\star}^k}^k)-\Tr(P_{t+1,\CS_{\star}^k}^k)\big)\\\nonumber
&\le \frac{4\max_{\CS\subseteq\G,|\CS|=H}\{n,m_{\CS}\}T^{\prime}\zeta^2\tau_0^{\prime}\sqrt{N^{\prime}}}{(1-\eta^2)}\sigma(\sigma_R+\Gamma^3)\big(3\tilde{\Gamma}^6(20\tilde{\Gamma}\sigma_R)^{\ell-1}32\ell^{\frac{5}{2}}\tilde{\beta}^{2(\ell-1)}(1+\nu^{-1})\max\{\sigma_Q,\sigma_R\}\big)^2.
\end{align}
Combining the above arguments completes the proof.\hfill\qed

\section{Proofs for large-scale instances}\label{app:proofs for large-scale instances}
\subsection*{Proof of Proposition~\ref{prop:regret of A_e large}}
Following similar arguments to the proof of Theorem~\ref{thm:regret of as alg} in Section~\ref{sec:proof of thm 1}, the regret $R_{\A_l}$ of Algorithm~\ref{alg:online for AS large scale} can be decomposed into $R_{\A_l}=R_{\A_l}^1+R_{\A_l}^2+R_{\A_l}^3+R_{\A_l}^4$ with
\begin{align}\nonumber
&R_{\A_l}^1 = (1-e^{-c_g})\big(\sum_{k\in\tilde{\K}}J_k(\emptyset)-J_k(\CS_{\star}^k)\big)-\E_{\A_l}\Big[\sum_{k\in\tilde{K}}J(\emptyset)-J_k(\CS^k,u_{\CS^k}^k)\Big],\\
&R_{\A_l}^2 =(1-e^{-c_g})\big(\sum_{k\in\K}J_k(\emptyset)-J_k(\CS^{k}_{\star},u_{\CS^{k}_{\star}}^{k})\big)-\E_{\A_l}\Big[\sum_{k\in\K}J_k(\emptyset)-J_k(\CS^k,u_{\CS^k}^{k})\Big],\label{eqn:R_Al^2}\\\nonumber
&R_{\A_l}^3 = (1-e^{-c_g})R_e^3,\\\nonumber
&R_{\A_l}^4 = (1-e^{-c_g})R_e^4,
\end{align}
where $R_{\A_l}^1$ corresponds to the system identification phase in Algorithm~\ref{alg:online for AS large scale}, $R_{\A_l}^3,R_{\A_l}^4$ correspond to the certainty equivalent control subroutine and $R_{\A_l}^2$ corresponds to the regret of the {\bf Exp3.S} subroutines $M_1,\dots,M_H$. Suppose the event $\CE$ defined in \eqref{eqn:event E} holds. Following similar arguments to those for Lemmas~\ref{lemma:upper bound on R_e^1}, \ref{lemma:upper bound on R_e^3} and \ref{lemma:upper bound on R_e^4}, one can show that $R_{\A_l}^1+R_{\A_l}^3+R_{\A_l}^4=\tilde{O}(n(m+n)^2p^2T\sqrt{N})$. We then focus on upper bounding $R_{\A_l}^2$ in the remaining of this proof.

To proceed, for any $\bar{\CS}\subseteq\bar{\G}$ with $\bar{\G}$ defined in Eq.~\eqref{eqn:augmented G} and any $k\in\K$, we define
\begin{equation}\label{eqn:def of f_k}
f_k(\bar{\CS}) =J_k(\emptyset)- J_k(\bar{\CS}^k,u_{\bar{\CS}^k}^k),
\end{equation}
where $\bar{\CS}^k=\{\bar{s}^k\in\bar{s}:\bar{s}\in\bar{\CS}\}$ with $\bar{s}^k$ denoting the $k$th element of the tuple $\bar{s}\in\bar{\CS}$, and $u_{t,\bar{\CS}^k}^k=\hat{K}_{t,\bar{\CS}^k}^kx_t^k$ for all $t\in\{0,\dots,T-1\}$ with $\hat{K}_{t,\bar{\CS}^k}^k$ obtained via Eq.~\eqref{eqn:control gain S_t est} using $\hat{A},\hat{B}$ from Algorithm~\ref{alg:online for AS large scale}. For any $\bar{\CS}\subseteq\bar{\G}$, we define $\bar{f}(\bar{\CS})=\sum_{k\in\K} f_k(\bar{\CS})$.  
For any $j\in[H]$, we further denote $\bar{\CS}^{\prime}_{j}=\{\bar{s}_1,\dots,\bar{s}_j\}$ with $\bar{\CS}_0=\emptyset$, where $\bar{s}_j=(s_j^{N_{p+1}},\dots,s_j^K)$ contains the actuators selected by $M_j$ from episodes $k=N_{p+1}$ to $N$. Based on the above notations, we see from Eq.~\eqref{eqn:cost of arms in large-scale} that the regret of any subroutine $M_j$ in Algorithm~\ref{alg:online for AS large scale} can be written as
\begin{equation*}
r_j = \max_{\bar{s}\in\bar{\G}}\big\{\bar{f}(\bar{\CS}^{\prime}_{j-1}\cup\bar{s})-\bar{f}(\bar{\CS}^{\prime}_{j-1})\big\}-\big(\bar{f}(\bar{\CS}^{\prime}_{j-1}\cup\bar{s}_j)-\bar{f}(\bar{\CS}^{\prime}_{j-1})\big).
\end{equation*}
Following similar arguments to those in the proof of Lemmas~\ref{lemma:upper bound on R_e^3}-\ref{lemma:upper bound on R_e^4}, one can also show that 
\begin{equation*}
\big|\bar{f}(\bar{\CS})-\bar{g}(\bar{\CS})\big|=\tilde{O}(n(n+m)^2T\sqrt{N}),
\end{equation*}
for all $\bar{\CS}\subseteq\bar{\G}$, where $\bar{g}(\bar{\CS})=\sum_{k\in\K}g_k(\bar{\CS}_k)$ with $g_k(\cdot)$ defined in Eq.~\eqref{eqn:g_k normalized}. Now, one can view $\bar{\CS}^{\prime}_H$ selected by $M_1,\dots,M_H$ as a greedy solution to the optimization problem $\max_{\bar{\CS}\subseteq\bar{\G},|\bar{\CS}|\le H}\bar{g}(\bar{\CS})$, where there is an error in evaluating the objective function $\bar{g}(\cdot)$ and the $j$th greedy choice is made with an additive error $r_j$ for all $j\in[H]$. 
Using similar arguments to those for \cite[Proposition~7]{chamon2021approximately} and \cite[Lemma~7]{ye2020distributed}, one can show that 
\begin{equation*}
(1-e^{-c_g})\max_{\bar{\CS}\subseteq\G,|\bar{\CS}|=H}\big\{\bar{f}(\bar{\CS})\big\}-\bar{f}(\bar{\CS}^{\prime}_H)\le\tilde{O}(n(n+m)^2T\sqrt{N})+\sum_{j=1}^Hr_j,
\end{equation*}
To proceed, suppose the event $\CE$ defined in Eq.~\eqref{eqn:event E} holds. Following similar arguments to those for \eqref{eqn:upper bound on cost under event E} in the proof of Lemma~\ref{lemma:upper bound on R_e^2}, one can also show that $y_{j,s}^k\in[-\bar{y}_b,\bar{y}_b]$ for all $j\in[H]$, all $s\in\G$ and all $k\in\K$, where $\bar{y}_b=\tilde{O}(nT)$ is defined as Eq.~\eqref{eqn:y_b bar}. Denote $N_e=|\{k:k\in\{N_{p+1},\dots,N\},b_k=1\}|$, where we note that $\E_{\A_l}[N_e]=\rho N$ with $N_e\le N$. Taking the expectation $\E_{\A_l}[\cdot]$ on both sides of the above inequality, and recalling Eq.~\eqref{eqn:R_Al^2}, one can then show via the definition of Algorithm~\ref{alg:online for AS large scale} that
\begin{align}\nonumber
R_{\A_l}^2&=(1-e^{-c^{\prime}})\sum_{k\in\K}\big(J_k(\emptyset)-J_k(\CS^{k}_{\star},u_{\CS^{k}_{\star}}^{k})\big)-\E_{\A_l}\Big[\sum_{k\in\K}J_k(\emptyset)-f_k(\bar{\CS}^{\prime}_H)-J_k(\CS^k,u_{\CS^k}^k)+f_k(\bar{\CS}^{\prime}_H)\Big]\\\nonumber
&\le\tilde{O}(n(n+m)^2T\sqrt{N})+\E_{\A_l}\Big[\sum_{j=1}^Hr_j\Big]+\E_{\A_l}[N_e]\tilde{O}(nT).
\end{align} 

Thus, it remains to bound $\E_{\A_l}[r_j]$ for all $j\in[H]$. We see from Eq.~\eqref{eqn:y_hat} and the definition of Algorithm~\ref{alg:online for AS large scale} that for any $j\in[H]$, any $s\in\G$ and any $k\in\K$,
\begin{align}
\E_{\A_l}[\hat{y}_{j,s}^k]=\frac{\rho}{|\G|H}y_{j,s}^k+\frac{\rho}{|\G|H}J_k(\CS^{\prime k}_{j-1},u_{\CS^{\prime k}_{j-1}}^k).
\end{align}
Also, as we argued above, we have $y_{j,s}^k\in[-\bar{y}_b,\bar{y}_b]$ for all $j\in[H]$, all $s\in\G$ and all $k\in\K$. Following similar arguments to those for \cite[Lemma~5\&Theorem~13]{streeter2008online}, one can now show via Lemma~\ref{thm:regret for exp3.s} that for any $j\in[H]$,
\begin{align}\nonumber
\E_{\A_l}[r_j]&\le\frac{|\G|H}{\rho}\tilde{O}(nT)\E_{\A_l}\Big[\sqrt{|\G|N_e(h(\CS_{\star})\ln(|\G|N_e)+e)}\Big]\\\nonumber
&\le\frac{|\G|H}{\rho}\tilde{O}(nT)\sqrt{\E_{\A_l}\big[|\G|N_e(h(\CS_{\star})\ln(|\G|N_e)+e)\big]}\\\nonumber
&\le\frac{|\G|H}{\rho}\tilde{O}(nT)\sqrt{|\G|\E_{\A_l}[N_e](h(\CS_{\star})\ln(|\G|N)+e)}\\\nonumber
&\le|\G|H\tilde{O}(nT)\sqrt{\frac{|\G|}{\rho}N(h(\CS_{\star})\ln(|\G|N)+e)},
\end{align}
where we use the Jensen's inequality to obtain the second inequality. Combining the above arguments and noting the choice of $\rho$ in Eq.~\eqref{eqn:rho}, we obtain that 
\begin{align*}
R_{\A_l}^2=\tilde{O}(n(m+n)^2T|\G|^{3/2}H^2h(\CS_{\star})^{1/2}N^{2/3}),
\end{align*}
which together with the upper bound on $R_{\A_l}^1+R_{\A_l}^3+R_{\A_l}^4$ argued above complete the proof of the proposition.\hfill\qed

\subsection*{Proof of Proposition~\ref{prop:regret of A_e large non-episodic}}
Following similar arguments to the proofs of Theorem~\ref{thm:regret of as alg non-episodic} and Proposition~\ref{prop:regret of A_e large}, one can decompose the regret $R_{\A_l^{\prime}}$ of Algorithm~\ref{alg:online for AS large scale} (in the non-episodic setting) can be into $R_{\A_l^{\prime}}=R_{\A_l^{\prime}}^1+R_{\A_l^{\prime}}^2$, where $R_{\A_l^{\prime}}^1$ corresponds to the system identification phase and the certainty equivalent control subroutine in Algorithm~\ref{alg:online for AS large scale}, and $R_{\A_l^{\prime}}$ corresponds to {\bf Exp3.S} subroutines in Algorithm~\ref{alg:online for AS large scale}.  In particular, we have
\begin{align}
R_{\A_l^{\prime}}^2 = (1-e^{-c_g})\big(\sum_{k\in\K^{\prime}}J_k(\emptyset)-J_k(\CS^{k}_{\star},u_{\CS^{k}_{\star}}^{k})\big)-\E_{\A_l}\Big[\sum_{k\in\K^{\prime}}J_k(\emptyset)-J_k(\CS^k,u_{\CS^k}^{k})\Big],
\end{align}
where we see from \eqref{eqn:LQR obj 2nd} that $J(\emptyset)$ can be written into the episodic form based on the episodic instance of Problem~\eqref{eqn:LQR obj} constructed in Section~\ref{sec:alg for non-episodic setting}, i.e., $J(\emptyset)=\sum_{k\in[N^{\prime}]}J_k(\emptyset)$. Now, following similar arguments to those for Lemmas~\ref{lemma:upper bound on R_1^c}, \ref{lemma:upper bound on R_c^3} and \ref{lemma:upper bound on R_c^4}, one can show that $R_{\A_l^{\prime}}^1=\tilde{O}(n(m+n)^2\rho T^{\prime}\sqrt{T/T^{\prime}})$ (under the event $\CE^{\prime}$ defined as \eqref{eqn:event E} with $N^{\prime},T^{\prime}$). We then upper bound $R_{\A_l}^2$ using similar arguments to those for Lemma~\ref{lemma:upper bound on R_c^2}. Hence, it remains to upper bound $R_{\A_l^{\prime}}^2$. Following similar arguments to those in the proof of Proposition~\ref{prop:regret of A_e large}, one can show that 
\begin{align}\nonumber
R_{\A_l^{\prime}}^2&=(1-e^{-c^{\prime}})\sum_{k\in\K^{\prime}}\big(J_k(\emptyset)-J_k(\CS^{k}_{\star},u_{\CS^{k}_{\star}}^{k})\big)-\E_{\A_l}\Big[\sum_{k\in\K^{\prime}}J_k(\emptyset)-f_k(\bar{\CS}^{\prime}_H)-J_k(\CS^k,u_{\CS^k}^k)+f_k(\bar{\CS}^{\prime}_H)\Big]\\\nonumber
&\le\tilde{O}(n(n+m)^2T^{\prime}\sqrt{\frac{T}{T^{\prime}}})+\E_{\A_l}\Big[\sum_{j=1}^Hr_j\Big]+\frac{T}{T^{\prime}}\rho\tilde{O}(nT^{\prime}).
\end{align} 
Moreover, following similar arguments to those in the proofs of Proposition~\ref{prop:regret of A_e large}, Lemmas~\ref{lemma:almost episodic} and \ref{lemma:upper bound on R_c^2}, one can bound
\begin{align}
\E_{\A_l^{\prime}}[r_j]\le|\G|H\tilde{O}(nT^{\prime})\sqrt{\frac{|\G|}{\rho}\frac{T}{T^{\prime}}(h(\CS_{\star})\ln(|\G|\frac{T}{T^{\prime}}))}+\tilde{O}(\frac{T}{T^{\prime}}T_m+T^{-1/4}T).
\end{align}
Combining the above arguments together and noting the choices of $T^{\prime}$, $T_m$ and $\rho$, we obtain that 
\begin{align*}
R_{\A_l}^2=\tilde{O}(n(m+n)^2|\G|^{3/2}H^2h(\CS_{\star})^{1/2}T^{3/4}),
\end{align*}
which together with the upper bound on $R_{\A_l^{\prime}}^1$ complete the proof of the proposition.\hfill\qed

\section{Technical lemmas}\label{app:tech lemmas}
\begin{lemma}
\label{lemma:power of perturbed matrix}
Consider a sequence of matrices $X_0,X_1,\dots$, where $X_k\in\R^{n\times n}$ for all $k\ge0$, and a sequence of matrices $\Delta_0,\Delta_1,\dots$, where $\Delta_k\in\R^{n\times n}$ and $\norm{\Delta_i}\le\varepsilon$ for all $k\ge0$. Suppose that there exist $\zeta\in\R_{>0}$ and $\eta\in\R_{>0}$ such that
\begin{equation*}
\label{eqn:bound on M's}
\norm{X_{k_2-1}X_{k_2-2}\cdots X_{k_1}}\le\zeta\eta^{k_2-k_1},
\end{equation*}
for all $k_1,k_2\in\Z_{\ge0}$ with $k_2>k_1$. Then, the following holds:
\begin{equation*}
\label{eqn:power of perturbed matrix}
(X_{k_2-1}+\Delta_{k_2-1})(X_{k_2-2}+\Delta_{k_2-2})\cdots\\\times (X_{k_1}+\Delta_{k_1})\le\zeta(\zeta\varepsilon+\eta)^{k_2-k_1},
\end{equation*}
for all $k_1,k_2\in\Z_{\ge0}$ with $k_2>k_1$.
\end{lemma}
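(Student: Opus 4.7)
The plan is to expand the product on the left-hand side into a sum over all choices of picking either $M_i$ or $\Delta_i$ at each of the $n \triangleq k_2 - k_1$ positions, and then bound each resulting term using the hypothesis on runs of the $M_i$'s together with the uniform bound $\|\Delta_i\| \le \varepsilon$. Concretely, distributing the product yields
\begin{equation*}
\prod_{i=k_2-1}^{k_1}(M_i + \Delta_i) \;=\; \sum_{S \subseteq \{k_1,\dots,k_2-1\}} \prod_{i=k_2-1}^{k_1} N_i^{(S)},
\end{equation*}
where $N_i^{(S)} = \Delta_i$ if $i \in S$ and $N_i^{(S)} = M_i$ otherwise, and the product is taken in descending order of $i$.

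Next, for a fixed $S$ with $|S| = j$, I would group the $M_i$ factors into maximal consecutive runs. Let $r$ denote the number of such non-empty runs; clearly $r \le j+1$, since the $j$ positions in $S$ partition the remaining $n - j$ positions into at most $j + 1$ contiguous blocks. Applying submultiplicativity of $\|\cdot\|$, the hypothesis \eqref{eqn:bound on M's} to each run, and $\|\Delta_i\| \le \varepsilon$ to each $\Delta$-factor, the $S$-term is bounded by $\zeta^r \eta^{n - j} \varepsilon^j$. Using $\zeta \ge 1$ (which, as can be checked from the $n=1$ case, is implicitly required for \eqref{eqn:power of perturbed matrix} to hold, and is satisfied in every invocation of this lemma in the paper), this is in turn at most $\zeta^{j+1} \eta^{n-j} \varepsilon^j$.

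Summing this bound over all $\binom{n}{j}$ subsets of size $j$ and then over $j$, and invoking the binomial theorem, gives
\begin{equation*}
\Big\|\prod_{i=k_2-1}^{k_1}(M_i + \Delta_i)\Big\| \;\le\; \sum_{j=0}^{n} \binom{n}{j} \zeta^{j+1} \varepsilon^j \eta^{n-j} \;=\; \zeta \sum_{j=0}^{n} \binom{n}{j}(\zeta\varepsilon)^j \eta^{n-j} \;=\; \zeta(\zeta\varepsilon + \eta)^{n},
\end{equation*}
which is the desired inequality. The main subtlety is the combinatorial bookkeeping in step two: one must be careful about runs at the endpoints (when $k_1 \in S$ or $k_2 - 1 \in S$) and about consecutive elements of $S$ producing fewer than $j+1$ runs, but both cases only decrease $r$ below $j+1$ and are harmless once $\zeta \ge 1$. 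An alternative would be a short induction on $n$ where one writes $(M_{k_2-1}+\Delta_{k_2-1})\prod_{i=k_2-2}^{k_1}(M_i+\Delta_i)$ and splits on whether the last factor is $M$ or $\Delta$; however this route does not cleanly reuse the block hypothesis \eqref{eqn:bound on M's} on arbitrary sub-ranges, so the direct expansion is cleaner.
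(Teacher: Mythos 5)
Your proof is correct and follows essentially the same route as the paper: expand the product into $2^{k_2-k_1}$ terms, bound each term containing $r$ factors of $\Delta$ by $\zeta^{r+1}\eta^{k_2-k_1-r}\varepsilon^{r}$ via the observation that the $\Delta$'s split the $M$'s into at most $r+1$ consecutive runs, and then sum with the binomial theorem (the step the paper delegates to Lemma~5 of \cite{mania2019certainty}). Your remark that $\zeta\ge1$ is implicitly required --- the statement only assumes $\zeta>0$, yet the $n=1$ case already forces $\zeta\ge1$ --- is a valid and worthwhile sharpening of the hypothesis, and it is satisfied in every invocation of the lemma in the paper.
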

\begin{proof}
First, one can expand the left hand side of \eqref{eqn:power of perturbed matrix} into $2^{k_2-k_1}$ terms. For all $r\in\{0,1,\dots,k_2-k_1\}$ and for all $s\in\{1,2,\dots,{k_2-k_1\choose r}\}$, let $G_{r,s}$ denote a term in the expansion whose degree of $\Delta_i$ is $r$ and whose degree of $X_i$ is $k-r$, where ${k_2-k_1 \choose r}$ is the number of terms in the expansion with the degree of $\Delta_i$ to be $r$. For instance, the term $X_{k_2-1}\Delta_{k_2-2}X_{k_2-3}\Delta_{k_2-4}X_{k_2-5}\cdots X_{k_1}$ may be denoted as $G_{2,s}$ for $s\in\{1,2,\dots,{k_2-k_1 \choose 2}\}$. Under the above notation, the left hand side of \eqref{eqn:power of perturbed matrix} can be written as $\sum_{r=0}^k\sum_{s=1}^{k\choose r}G_{r,s}$.  From \eqref{eqn:bound on M's}, we also note that $\norm{G_{r,s}}\le\zeta^{r+1}\eta^{k_1-k_2-r}\varepsilon^r$. This is because the $\Delta_i$'s in the term $G_{r,s}$ split the $X$'s in $G_{r,s}$ into at most $r+1$ disjoint groups, and $\norm{\Delta_i}\le\varepsilon$ for all $i\in\Z_{\ge0}$. For instance, we see that $X_{k_2-1}\Delta_{k_2-2}X_{k_2-3}\Delta_{k_2-4}X_{k_2-5}\cdots X_{k_1}\le\zeta^3\eta^{k_2-k_1-2}\varepsilon^2$. The rest of the proof then follows from the proof of \cite[Lemma~5]{mania2019certainty}.
\end{proof}

\begin{lemma}
\label{lemma:bound on P}
Consider the following DARE for all $t\in\{0,\dots,T-1\}$:
\begin{equation}
P_{t} = A^{\top}P_{t+1}A-A^{\top}P_{t+1}B(B^{\top}P_{t+1}B+R)^{-1}B^{\top}P_{t+1}A+Q,
\end{equation}
initialized with $P_T=Q_f$, where $Q,Q_f\in\BS_{+}^n$ and $R\in\BS_{++}^m$. Suppose there exists a stabilizing $K\in\R^{m\times n}$ for the pair $(A,B)$ with $\tilde{\zeta}\in\R_{\ge1}$ and $\tilde{\eta}\in\R,0<\tilde{\eta}<1$ such that $\norm{(A+BK)^k}\le \tilde{\zeta}\tilde{\eta}^k$ for all $k\ge0$. Then $\norm{P_t}\le\tilde{\zeta}^2\frac{\sigma_{\max}}{1-\tilde{\eta}^2}$ for all $t\in\{0,\dots,T\}$, where $\sigma_{\max}\triangleq\max\{\norm{Q+K^{\top}RK},\norm{Q_f}\}$.
\end{lemma}
\begin{proof}
From \cite[Chpater~3]{bertsekas2017dynamic}, we know that for any $x\in\R^n$ and any $t\in\{0,\dots,T-1\}$, the following holds:
\begin{equation}\label{eqn:optimal of P}
x^{\top}P_{t}x=\min_{u\in\R^m}\big[x^{\top}Qx+u^{\top}Ru+(Ax+Bu)^{\top}P_{t+1}(Ax+Bu)\big].
\end{equation}
Consider another DARE for all $t\in\{0,\dots,T-1\}$:
\begin{equation*}
\tilde{P}_t=Q+K^{\top}RK+(A+BK)^{\top}\tilde{P}_{t+1}(A+BK),
\end{equation*}
initialized with $\tilde{P}_T=Q_f$. Now, by the optimality of $u$ in Eq.~\eqref{eqn:optimal of P}, considering $u=Kx$ and $t=T-1$ gives 
\begin{equation*}
x^{\top}P_{T-1}x\le\big[x^{\top}Qx+u^{\top}Ru+(Ax+BKx)^{\top}P_{T}(Ax+BKx)\big]=x^{\top}\tilde{P}_{T-1}x,
\end{equation*}
for all $x\in\R^n$, which implies that $P_{T-1}\preceq \tilde{P}_{T-1}$. Since Eq.~\eqref{eqn:optimal of P} holds for all $t\in\{0,\dots,T-1\}$, one can repeat the above arguments and obtain $P_{t}\preceq\tilde{P}_{t}$ for all $t\in\{0,\dots,T-1\}$, which also implies that $\norm{P_t}\le\norm{\tilde{P}_t}$. Using the above recursions for $\tilde{P}_t$, $\tilde{\zeta},\tilde{\eta}$ given by the assumption of the lemma, and some algebra complete the proof of the lemma.
\end{proof}





\end{document}